\newsavebox{\@brx}
\newcommand{\llangle}[1][]{\savebox{\@brx}{\(\m@th{#1\langle}\)}%
  \mathopen{\copy\@brx\kern-0.5\wd\@brx\usebox{\@brx}}}
\newcommand{\rrangle}[1][]{\savebox{\@brx}{\(\m@th{#1\rangle}\)}%
  \mathclose{\copy\@brx\kern-0.5\wd\@brx\usebox{\@brx}}}
\newsavebox{\measure@tikzpicture}
  \edef\tikzscale{\pgfmathresult}%
\DeclarePairedDelimiter\norm{\lvert}{\rvert}
\DeclarePairedDelimiter\inner{\langle}{\rangle}
\numberwithin{equation}{section}
\newcounter{intro}
		\newtheorem{introthm}[intro]{Theorem}
		\newtheorem{thm}[equation]{Theorem}
		\newtheorem{lem}[equation]{Lemma}
		\newtheorem{cor}[equation]{Corollary}
\theoremstyle{remark}
\theoremstyle{definition}
		\newtheorem{exam}[equation]{Example}
\newcommand{\irr}{\mathrm{Irr}}
\title{A Frobenius group analog for Camina triples}
\author{Shawn T. Burkett}
\address{} \email{shawn.t.burkett@gmail.com}
\author{Mark L. Lewis}
\address{Department of Mathematical Sciences, Kent State University, Kent,
Ohio 44242, U.S.A.} \email{lewis@math.kent.edu}
\date{\today}
\begin{document}

\begin{abstract}
Frobenius groups are an object of fundamental importance in finite group theory. As such, several generalizations of these groups have been considered. Some examples include: A Frobenius--Wielandt group is a triple $(G,H,L)$ where $H/L$ is {\it almost} a Frobenius complement for $G$;  A Camina pair is a pair $(G,N)$ where $N$ is {\it almost} a Frobenius kernel for $G$; A Camina triple is a triple $(G,N,M)$ where $(G,N)$ and $(G,M)$ are {\it almost} Camina pairs. In this paper we study triples $(G,N,M)$ where $(G,N)$ and $(G,M)$ are {\it almost} Frobenius groups. 
\end{abstract}

\maketitle

\section{Introduction}

Frobenius groups are ubiquitous in finite group theory as they arise very naturally in a variety of situations.  In addition, Frobenius groups have been generalized in a number of different ways.  In this paper, we present a new generalization of Frobenius groups.

Recall that if $G$ is a Frobenius group, then there is a normal subgroup $N$ that has complement $H$ in $G$ so that $(|N|,|H|) = 1$ and $C_G (x) \le N$ for all $x \in N \setminus \{ 1 \}$.  In particular, every element of $G \setminus N$ acts fixed-point-freely on $N$.  One key property of Frobenius groups is that the conjugacy class of any element $g \in G \setminus N$ is a union of cosets of $N$.  This is the basis of the generalization of Camina pairs that have been studied widely in the literature.

Recall that the pair $(G,N)$ is called a {\it Camina pair} if $N$ is a proper, nontrivial normal subgroup of $G$ where every element $g \in G \setminus N$ has the property that its conjugacy class is a union of cosets of $N$.  It is not difficult to see that every Frobenius group with its Frobenius kernel yields a Camina pair.  In fact, it is possible to characterize the Camina pairs that are Frobenius groups.  If $(G,N)$ is a Camina pair, then $G$ is a Frobenius group with Frobenius kernel $N$ if and only if $(|G:N|,|N|) = 1$ and if and only if there is a complement $H$ for $N$ in $G$.

Camina pairs have been generalized further to Camina triples.   A {\it Camina triple} is a triple $(G,N,M)$ where $M \le N$ are nontrivial proper normal subgroups of $G$ so that every element $g \in G \setminus N$ satisfies the property that its conjugacy class is a union of cosets of $M$.  Camina triples motivate our generalization of Frobenius groups.  

To be precise, we study groups $G$ with two nontrivial proper normal subgroups $M$ and $N$, with $M\le N$, such that every element of $G$ lying outside of $N$ acts fixed-point-freely on $M$. Observe that if $M=N$ then $G$ is Frobenius group with Frobenius kernel $N$. We will call such a triple $(G,N,M)$ a {\it Frobenius triple}.  In our first theorem, we show that Frobenius triples play the same role for Camina triples that Frobenius groups play for Camina pairs.

\begin{introthm}\label{partconverse}
Let  $(G,N,M)$ be a Camina triple. The following are equivalent:
\begin{enumerate}[label={\bf(\arabic*)}]
\item $(G,N,M)$ is a Frobenius triple;
\item $(\norm{G:N},\norm{M})=1$;
\item There exists a subgroup $H\le G$ so that $G=HN$, $H\cap M=1$, and $(HM,(H \cap N)M,M)$ is a Camina triple.
\end{enumerate}
\end{introthm}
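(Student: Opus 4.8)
The plan is to run the cycle $(1)\Rightarrow(2)\Rightarrow(3)\Rightarrow(1)$, but since the force of the theorem is concentrated in the equivalence of the Frobenius condition with coprimality, I would really organize it as $(1)\Leftrightarrow(2)$ together with the complement statement $(2)\Leftrightarrow(3)$. Throughout write $\pi=\pi(M)$ for the set of prime divisors of $|M|$, and I would first record the single consequence of the Camina hypothesis that drives everything: for $g\in G\setminus N$ the whole coset $gM$ lies in $g^G$, so every element of $gM$ is conjugate to $g$ and in particular has the same order as $g$.

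For $(1)\Rightarrow(2)$, argued contrapositively, suppose a prime $p$ divides both $|G:N|$ and $|M|$. Pick a Sylow $p$-subgroup $P$ of $G$; then $Q=P\cap M$ is a nontrivial (as $p\mid|M|$) normal subgroup of the $p$-group $P$, so $Z(P)\cap Q\neq 1$, and I choose $1\neq z$ there. Because $p\mid|G:N|$ the image $PN/N$ is a nontrivial Sylow $p$-subgroup of $G/N$, so $P\not\le N$ and there is $y\in P\setminus N$. Now $y$ centralizes $z\in M\setminus\{1\}$, so $y$ does not act fixed-point-freely on $M$, contradicting $(1)$. Note this step uses no Camina hypothesis at all.

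The heart is $(2)\Rightarrow(1)$. Given $g\in G\setminus N$, coprimality makes $G/N$ a $\pi'$-group, so the $\pi'$-part of $g$ satisfies $g_{\pi}\in N$ while $w:=g_{\pi'}\notin N$; since $C_M(g)\le C_M(w)$ it suffices to show $C_M(w)=1$. Here $w$ is a $\pi'$-element and $M$ is a $\pi$-group, so for any $z\in C_M(w)$ the elements $w$ and $z$ commute and $wz\in wM\subseteq w^G$ gives $o(wz)=o(w)$; from $(wz)^{o(w)}=w^{o(w)}z^{o(w)}=z^{o(w)}=1$ I get $o(z)\mid o(w)$, a $\pi'$-number. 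As $o(z)$ also divides $|M|$, a $\pi$-number, this forces $z=1$. I view this coset-order computation as the essential point.

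For $(2)\Rightarrow(3)$ I would split $M$ off by Schur--Zassenhaus: it suffices to produce a $\pi'$-subgroup $\bar H\le G/M$ with $\bar H\,(N/M)=G/M$, for then its preimage contains $M$ as a normal Hall $\pi$-subgroup and hence admits a complement $H$, which satisfies $HN=G$ and $H\cap M=1$ (indeed $|H|=|\bar H|$ is prime to $|M|$); equivalently one wants a Hall $\pi'$-subgroup of $G$, which automatically supplements $N$ and avoids $M$. The converse $(3)\Rightarrow(2)$ is the place where the Camina rigidity of the cosets $gM$ must be fed back in to recover coprimality, and I expect this — rather than either order computation — to be the main obstacle: the existence of the Hall $\pi'$-subgroup in $(2)\Rightarrow(3)$ is only guaranteed by P.\ Hall's theorem once $G$ is $\pi$-separable, so I would first extract that structure from the hypotheses (a $\pi'$-element outside $N$ acts fixed-point-freely on $M$, which constrains $M$ to be solvable and, together with $G/N$ being a $\pi'$-group, should supply the separability needed to run the Hall and Schur--Zassenhaus machinery). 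Pinning down this reduction, and correctly leveraging the Camina condition in the direction $(3)\Rightarrow(2)$, is where the genuine work lies.
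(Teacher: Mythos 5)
Your equivalence $(1)\Leftrightarrow(2)$ is essentially right, and your argument for $(2)\Rightarrow(1)$ is in fact more elementary than the paper's: Lemma~\ref{partconverse 1} decomposes $g$ into commuting $\pi$- and $\pi'$-parts exactly as you do, but then invokes the solvability of $M$ (a nontrivial fact about Camina triples, \cite[Lemma 2.7]{NM14}) together with the coprime-action identity $C_{G/M}(yM)=C_G(y)M/M$ to force $C_M(y)=1$, whereas your observation that $wz\in wM\subseteq w^G$ forces $o(wz)=o(w)$, hence $z^{o(w)}=1$ for commuting $z$, reaches the same conclusion with no machinery. The direction $(1)\Rightarrow(2)$ is the same Sylow/center argument as in the paper.

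The genuine gaps are both in the third condition. For $(2)\Rightarrow(3)$ you propose to produce a $\pi'$-subgroup $\bar H\le G/M$ with $\bar H(N/M)=G/M$ and then complement $M$ in its preimage by Schur--Zassenhaus; but the existence of such an $\bar H$ is a Hall-type statement that the hypotheses do not supply. You hope to extract $\pi$-separability from the solvability of $M$ and the fact that $G/N$ is a $\pi'$-group, but the section $N/M$ is completely unconstrained (the paper points out that any group can occur as a direct factor of $N/M$), so $G$ need not be $\pi$-separable and P.~Hall's theorem is unavailable. The resolution is that condition (3) only demands $H\cap M=1$, not that $H$ be a $\pi'$-group; the paper proves exactly this weaker statement in Lemma~\ref{psuedocomp} by induction on $|G|$: if every maximal subgroup contained $N$ then $N\le\Phi(G)$ and the primes of $M$ would divide $|\Phi(G)|$ but not $|G:\Phi(G)|$, a contradiction; so one takes a maximal $K$ with $G=NK$ and inducts on $(K,K\cap N,K\cap M)$. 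No separability is needed. Second, you give no argument at all for $(3)\Rightarrow(1)$ (or $(3)\Rightarrow(2)$): you correctly identify it as the step where the Camina condition must be re-injected, but flagging the obstacle does not close the cycle of implications. The paper's proof (Theorem~\ref{cam psuedosplit}) first reduces to the case $G=HM$ via Lemma~\ref{reduction}, then uses the Camina fact that $\{[g,y]:y\in G\}\supseteq M$ for $g\notin N$ together with the identity $[g,hm]=[g,m][g,h]^m$ and $H\cap M=1$ to show that $m\mapsto [g,m]$ maps $M$ onto $M$, whence $C_M(g)=1$; some argument of this kind is indispensable and is missing from your proposal.
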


We note that the requirement in (3) that $(HM,(H \cap N)M,M)$ is a Camina triple is necessary.  Suppose that $p$ is an odd prime and that $G$ is an extra-special $p$-group of order $p^3$.  Take $M = Z(G)$ and let $N$ be a subgroup of order $p^2$, then $(G,N,Z(G))$ is a Camina triple.  Fix an element $x \in G \setminus N$, and let $H = \langle x \rangle$.  Then $G = HN$ and $H \cap Z(G) = 1$.  It is not difficult to see that $(G,N, Z(G))$ is not a Frobenius triple.  We thank Viji Thomas for this example. 

Although Frobenius triples and Frobenius groups are defined similarly, we will see that the structure of Frobenius triples is far less restricted. For example, we will see that there exist Frobenius triples $(G,N,M)$ where $M$ has arbitrarily large Fitting height, whereas Frobenius kernels are always nilpotent. We will also see that one has no control over the structure of the quotient $N/M$. In fact, any group can be a direct factor of $N/M$ when $(G,N,M)$ is a Frobenius triple.

In \cite{FWgps}, Wielandt studies finite groups $G$ possessing a subgroup $H$ for which every conjugate of $H$ intersects $H$ within some fixed normal subgroup $L$ of $H$. These triples $(G,H,L)$ have come to be known as Frobenius--Wielandt triples. We will show that Frobenius--Wielandt groups and Frobenius triples are intimately connected.

\begin{introthm}\label{split FT equiv conditions}
Let $M$ and $N$ be nontrivial, proper normal subgroups of $G$ satisfying $M\le N$. Let $H\le G$ satisfy $G=HM$ and $H\cap M=1$. Then $(G,N,M)$ is a Frobenius triple if and only if $(G,H,H\cap N)$ is a Frobenius--Wielandt triple.
\end{introthm}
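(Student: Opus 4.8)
The plan is to translate both hypotheses into statements about centralizers of elements of $M$ inside the complement $H$, and then bridge the two translations with a fixed-point-free orbit argument. Throughout I would use the decomposition afforded by $G=HM$ with $H\cap M=1$: every $g\in G$ is uniquely $g=hm$ with $h\in H$, $m\in M$. Since $M\le N$ is proper and normal, $H\not\le N$ (otherwise $G=HM\le N$), so $L:=H\cap N$ is a \emph{proper} normal subgroup of $H$, the natural candidate for the Frobenius--Wielandt kernel. For $g=hm$ one has $H^g=H^m$ (because $h$ normalizes $H$), the equivalence $g\notin H\iff m\neq 1$, and, since $m\in M\le N$, the equivalence $g\notin N\iff h\notin N$.

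The key computational step is the identity $H\cap H^m=C_H(m)$ for every $m\in M$. To see it, suppose $h_0\in H\cap H^m$, say $h_0=k^m$ with $k\in H$; then $kh_0^{-1}=mh_0m^{-1}h_0^{-1}\in M$ because $M\trianglelefteq G$, so $kh_0^{-1}\in H\cap M=1$, forcing $h_0=k=m^{-1}h_0m$, i.e. $h_0\in C_H(m)$, and the reverse inclusion is immediate. Combining this with the reduction above, the Frobenius--Wielandt condition ``$H\cap H^g\le L$ for all $g\in G\setminus H$'' becomes ``$C_H(m)\le N$ for all $1\neq m\in M$''. Reading the commuting relation $[h_0,m]=1$ from the side of $M$, this is exactly the assertion that $C_M(h)=1$ for every $h\in H\setminus N$, while the Frobenius triple condition is that $C_M(g)=1$ for every $g\in G\setminus N$. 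The theorem thus reduces to the equivalence of these two fixed-point-free conditions, one quantified over $H\setminus N$ and the other over all of $G\setminus N$.

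One implication is immediate, since the Frobenius triple condition simply restricts to $H\setminus N\subseteq G\setminus N$. The substance, which I expect to be the main obstacle, is the converse: promoting fixed-point-freeness of the $H$-part to the whole coset. Here I would invoke the standard fixed-point-free orbit fact: if $h\in H\setminus N$ acts fixed-point-freely on the finite group $M$, then the $M$-conjugacy orbit $\{h^{m''}:m''\in M\}$ has size $[M:C_M(h)]=|M|$ and is contained in the coset $hM$, hence equals $hM$. Consequently every $g=hm\in hM$ is $M$-conjugate to $h$, so $C_M(g)$ is a conjugate of $C_M(h)=1$ and is therefore trivial; since $h\notin N$ whenever $g\notin N$, this yields the Frobenius triple condition and closes the equivalence.

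The two points that require care are the genuine finiteness of $M$, which is what makes the orbit count work, and fixing the precise form of the Frobenius--Wielandt axiom. I would take the defining inequality to range over all $g\in G\setminus H$; this forces $N_G(H)=H$ (any $g\in N_G(H)\setminus H$ gives $H\cap H^g=H\not\le L$), rules out a nontrivial element of $M$ centralizing $H$, and makes the reduction to ``$C_H(m)\le N$ for all $1\neq m\in M$'' exact, which is precisely what the centralizer translation needs.
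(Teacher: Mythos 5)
Your proposal is correct, and the skeleton overlaps with the paper's: the identity $H\cap H^m=C_H(m)$ (proved by exactly the commutator-in-$H\cap M=1$ trick) appears in the paper's proof as well, in the implications (3)$\Rightarrow$(4) of Theorem~\ref{equiv} and in the Frobenius-triple-to-Frobenius--Wielandt direction of Theorem~\ref{FT FW equiv}. Where you genuinely diverge is in the hard direction, promoting ``$C_M(h)=1$ for all $h\in H\setminus N$'' to ``$C_M(g)=1$ for all $g\in G\setminus N$.'' The paper gets this by invoking Wielandt's theorem on the existence of the Frobenius--Wielandt kernel $K$, identifying $K=(H\cap N)M=N$ by an index count, and reading off that $G\setminus N$ is partitioned by the conjugates $(H\setminus N)^x$, $x\in M$; the alternative route through Theorem~\ref{equiv} additionally passes through Camina-triple machinery (the commutator surjectivity of \cite[Theorem 2.1(v)]{NM14} inside Theorem~\ref{cam psuedosplit}). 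You replace all of that with a one-line orbit--stabilizer count: $C_M(h)=1$ forces the $M$-conjugacy class of $h$ to have size $\norm{M}$ and to sit inside $hM$, hence to equal $hM$, so every $g\in hM$ is $M$-conjugate to $h$ and inherits $C_M(g)=1$. This is more elementary and self-contained --- it proves the partition statement (condition (5) of Theorem~\ref{equiv}) directly from fixed-point-freeness rather than quoting Wielandt --- at the cost of not simultaneously establishing the other equivalent conditions the paper's cyclic argument delivers. Your closing remarks on finiteness of $M$ and on pinning down the Frobenius--Wielandt axiom (including that $L=H\cap N$ is automatically a proper normal subgroup of $H$ here) are exactly the right points to check, and both go through.
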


Frobenius groups have a particularly nice structure from a representation theoretic perspective as every irreducible character of a Frobenius group $G$ with Frobenius kernel $N$ either has $N$ in its kernel or is induced from a nonprincipal (irreducible) character of $N$. In fact, this is a condition that characterizes the Frobenius kernel. Camina pairs and Camina triples have similar definitions in terms of certain characters inducing homogeneously. 

\begin{introthm}\label{introindcent}
Let $M$ and $N$ be nontrivial, proper normal subgroups of $G$ satisfying $M\le N$. If $(\norm{G:N},\norm{M})=1$, then $(G,N,M)$ is a Frobenius triple if and only if $\psi^G$ is irreducible for every character $\psi \in \irr (N \mid M)$.
\end{introthm}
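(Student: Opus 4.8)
The plan is to pass through inertia groups. For $\psi \in \irr(N)$ a standard Clifford-theoretic computation gives $\inner{\psi^G,\psi^G} = \norm{I_G(\psi):N}$, so $\psi^G$ is irreducible if and only if $I_G(\psi)=N$; hence the condition to be proved is equivalent to the \emph{inertia condition} (Ind): $I_G(\psi)=N$ for all $\psi\in\irr(N\mid M)$. I will introduce the intermediate statement (Inr): $I_G(\lambda)\le N$ for every nonprincipal $\lambda\in\irr(M)$, and organize the proof as (FT) $\Rightarrow$ (Inr) $\Rightarrow$ (Ind) for one direction, and (Ind) $\Rightarrow$ Camina triple $\Rightarrow$ (FT) for the other, where the last step invokes Theorem~\ref{partconverse} together with the coprimality hypothesis $(\norm{G:N},\norm{M})=1$.

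For (FT) $\Rightarrow$ (Inr), suppose for contradiction that some nonprincipal $\lambda$ satisfies $I_G(\lambda)\not\le N$. Since $\norm{G:N}$ is coprime to $\norm{M}$, the nontrivial image of $I_G(\lambda)$ in $G/N$ has order prime to $\norm{M}$, so it contains an element of prime order $q\nmid\norm{M}$; lifting a representative and passing to its $q$-part yields a $q$-element $y\in I_G(\lambda)\setminus N$ with $q\nmid\norm{M}$. Then $\langle y\rangle$ acts coprimely on $M$, and (FT) forces $C_M(y)=1$. A fixed-point-free coprime action fixes no nontrivial conjugacy class of $M$ (each invariant class meets $C_M(y)=1$ by Glauberman's lemma), so by Brauer's permutation lemma it fixes no nonprincipal irreducible character of $M$; equivalently, the Glauberman correspondence gives $\norm{\irr_{\langle y\rangle}(M)}=\norm{\irr(C_M(y))}=1$. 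This contradicts $\lambda^y=\lambda$ with $\lambda\ne 1_M$, establishing (Inr).

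The implication (Inr) $\Rightarrow$ (Ind) is then immediate from Clifford theory: given $\psi\in\irr(N\mid M)$, the constituents of $\psi_M$ form a single $N$-orbit of nonprincipal characters; if $g\in I_G(\psi)$ then $g$ permutes this orbit, so $\lambda^g=\lambda^n$ for some $n\in N$, whence $gn^{-1}\in I_G(\lambda)\le N$ and $g\in N$. For the converse, assume (Ind). Every $\chi\in\irr(G\mid M)$ lies over some $\psi\in\irr(N)$ with $M\not\le\ker\psi$ (otherwise $M\le\ker\chi$), and then $\chi=\psi^G$ by (Ind); being induced from the normal subgroup $N$, $\chi$ vanishes on $G\setminus N$. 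Comparing character values of $g$ and $gm$ for $g\in G\setminus N$ and $m\in M$ (the characters trivial on $M$ agree on the two, the remaining ones vanish off $N$) shows $g$ is conjugate to $gm$, so $(G,N,M)$ is a Camina triple; since $(\norm{G:N},\norm{M})=1$, Theorem~\ref{partconverse} yields that $(G,N,M)$ is a Frobenius triple.

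The crux is the forward direction: the Frobenius hypothesis is a statement about fixed \emph{points} on $M$, whereas irreducible induction is a statement about fixed \emph{characters}, and there is no elementwise dictionary between the two outside the coprime setting. The device bridging them is the reduction to a $q$-element $y$ with $q\nmid\norm{M}$, which renders the action of $\langle y\rangle$ on $M$ coprime and lets the Glauberman/Brauer machinery convert ``no fixed nonidentity element'' into ``no fixed nonprincipal character.'' I expect the step requiring the most care to be the verification that such a $q$-element can always be extracted \emph{inside} the inertia group, using that the image of $I_G(\lambda)$ in $G/N$ has order prime to $\norm{M}$.
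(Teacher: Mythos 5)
Your proof is correct, and your converse direction is essentially the paper's: the paper cites Lemma~\ref{equivCam2} to pass from the induction hypothesis to a Camina triple and then invokes Theorem~\ref{partconverse}, while you reprove the relevant implication of Lemma~\ref{equivCam2} inline (vanishing of $\irr(G\mid M)$ on $G\setminus N$, then column comparison). The forward direction is where you genuinely diverge. The paper deduces it from Theorem~\ref{Frobish}, which asserts the induction statement for \emph{every} Frobenius triple with no coprimality hypothesis; that theorem is proved either by induction on $\norm{G}$ using Lemma~\ref{induce}, or via a bespoke $N$-orbit refinement of Brauer's permutation lemma (Lemma~\ref{newBrauer}) that converts \enquote{$g$ fixes no nonidentity $N$-class in $M$} directly into \enquote{$g$ fixes no $N$-orbit of nonprincipal characters of $M$.} Your intermediate condition (Inr) is precisely the paper's Lemma~\ref{inertial} (also proved there from Lemma~\ref{newBrauer}), but you establish it by a different device: you use the hypothesis $(\norm{G:N},\norm{M})=1$ to extract a $q$-element $y\in I_G(\lambda)\setminus N$ with $q\nmid\norm{M}$, and then apply Glauberman's lemma together with the \emph{classical} Brauer permutation lemma to the coprime action of $\langle y\rangle$ on $M$; your extraction of $y$ and the orbit argument for (Inr)~$\Rightarrow$~(Ind) both check out. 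The trade-off is clear: the paper's route proves the forward implication unconditionally (at the cost of developing Lemma~\ref{newBrauer}), whereas yours consumes the coprimality hypothesis in both directions but runs entirely on off-the-shelf machinery. Both are valid proofs of the theorem as stated.
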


As a straightforward application of Theorems~\ref{split FT equiv conditions} and \ref{introindcent}, we can describe the character theory of a Frobenius--Wielandt group $(G,H,L)$ in the situation that $H$ has a normal complement. 

\begin{introthm}\label{FW thm}
Let  $(G,H,L)$ be a Frobenius--Wielandt group with kernel $N$. Suppose that $M\lhd G$ satisfies $G=HM$ and $H\cap M=1$. Then $N=LM$, and $(G,N,M)$ is a Frobenius triple. In particular, $C_M(g)=1$ for every $g\in G\setminus N$ and $\psi^G\in\irr(G)$ for every $\psi\in\irr(N\mid M)$.
\end{introthm}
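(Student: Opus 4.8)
The plan is to read off everything from the internal structure of the Frobenius--Wielandt kernel and then feed the resulting data into Theorems~\ref{split FT equiv conditions} and \ref{introindcent}. Recall that, as the kernel of the Frobenius--Wielandt group $(G,H,L)$, the subgroup $N$ is normal in $G$ and satisfies $G=HN$ and $H\cap N=L$, and that by definition it is the set of those $g\in G$ none of whose $G$-conjugates lies in $H\setminus L$. My first move is to show $M\le N$: given $m\in M$ and $g\in G$, normality of $M$ gives $m^g\in M$, while $H\cap M=1$ gives $M\cap(H\setminus L)=\emptyset$, so $m^g\notin H\setminus L$; since $g$ was arbitrary, the characterization of the kernel forces $m\in N$. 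At the same time $L\cap M\le H\cap M=1$.

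Next I would settle $N=LM$ by counting. From $G=HN$ with $H\cap N=L$ we get $|G|=|H|\,|N|/|L|$, whereas $G=HM$ with $H\cap M=1$ gives $|G|=|H|\,|M|$; comparing these yields $|N|=|L|\,|M|$. Because $L\cap M=1$ we have $|LM|=|L|\,|M|=|N|$, and since both $L$ and $M$ lie in $N$ we obtain $LM\le N$, hence $N=LM$.

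With $N=LM$ in hand I would apply Theorem~\ref{split FT equiv conditions} to the normal subgroups $M\le N$ and the subgroup $H$ (which satisfies $G=HM$ and $H\cap M=1$ by hypothesis, and which is nondegenerate so that $M$ and $N$ are proper and nontrivial). The modular law gives $H\cap N=H\cap LM=L(H\cap M)=L$, so the triple appearing in that theorem is exactly $(G,H,L)$, which is a Frobenius--Wielandt triple by assumption; therefore $(G,N,M)$ is a Frobenius triple. The first ``in particular'' claim, $C_M(g)=1$ for all $g\in G\setminus N$, is then just the defining property of a Frobenius triple. For the induction statement I would invoke Theorem~\ref{introindcent}, whose hypothesis $(\norm{G:N},\norm{M})=1$ holds for every Frobenius triple: were a prime $p$ to divide both $\norm{G:N}$ and $\norm{M}$, a Sylow $p$-subgroup $P$ of $G$ would satisfy $P\not\le N$ (since $p\mid\norm{G:N}$) while $P\cap M$ is a nontrivial normal subgroup of the $p$-group $P$ and so meets $Z(P)$ in some $z\ne1$; choosing $g\in P\setminus N$ then gives $1\ne z\in C_M(g)$, contradicting fixed-point-freeness. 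Coprimality secured, Theorem~\ref{introindcent} yields $\psi^G\in\irr(G)$ for every $\psi\in\irr(N\mid M)$.

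I expect the identification $N=LM$ to be the only genuinely substantive step, and hence the main obstacle: it is the one place where the argument must use the explicit description of $N$ as the regular set $G\setminus\bigcup_{x\in G}(H\setminus L)^x$ rather than a cited equivalence, and establishing $M\le N$ depends on combining the normality of $M$ with that description. Once $N=LM$ is in place, the remainder is bookkeeping---checking $H\cap N=L$ so that Theorem~\ref{split FT equiv conditions} applies, and supplying the short Sylow argument for coprimality so that Theorem~\ref{introindcent} applies.
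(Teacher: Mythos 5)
Your proposal is correct and follows essentially the same route as the paper: use the regular-set description of the Frobenius--Wielandt kernel together with $H^g\cap M=1$ to get $M\le N$, deduce $N=LM$ (the paper uses Dedekind's law, $N=HM\cap N=(H\cap N)M$, where you count orders --- both work), then invoke the equivalence of Theorem~\ref{split FT equiv conditions} and finally Theorem~\ref{introindcent}. Your explicit Sylow argument for $(\norm{G:N},\norm{M})=1$ is the same argument the paper has already packaged as the forward direction of Lemma~\ref{partconverse 1}, so it is a sound (if slightly redundant) way to verify the hypothesis of Theorem~\ref{introindcent}.
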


As far as we are aware, this is the only paper to address the character theory of Frobenius--Wielandt groups, albeit in a somewhat limited case. However, in the situation of Theorem~\ref{FW thm} we see that the Frobenius--Wielandt kernel enjoys similar properties held by Frobenius kernels. In some sense, the quotient $N/M$ can be considered as a measure of how close the Frobenius--Wielandt kernel is to being a Frobenius kernel in this situation, if $L$ is chosen as small as possible.

The Frobenius kernel of a Frobenius group is uniquely determined. It is unfortunately not the case that $N$ or $M$ is uniquely determined if $(G,N,M)$ is a Frobenius triple. We will see however that for a fixed $N$ there is a maximal choice of $M$ and for a fixed $M$ there is a minimal choice of $N$. A similar result is true for Camina triples. In \cite{SBMLnested}, the present authors construct for any normal subgroup of $G$ a pair of normal subgroups that can be used to characterize Camina triples. These constructions satisfy a compatibility condition on the lattice of normal subgroups of $G$ known as a (monotone) Galois connection. In Section~\ref{galois} we discuss similar constructions for Frobenius triples. Specifically, we define for any normal subgroup $N$ of $G$ two subgroups $C(G\mid N)$ and $I(G\mid N)$ that also characterize Frobenius triples: $(G,N,M)$ is a Frobenius triple if and only if $C(G\mid M)\le N$, which happens if and only if $M\le I(G\mid N)$. In particular, these constructions give another Galois connection on the lattice of normal subgroups of $G$.

\section{Frobenius and Camina triples}

Frobenius groups have many interesting properties.  One such property is that the conjugacy class of any element lying outside of the Frobenius kernel $N$ is a union of cosets of $N$.  This property has been generalized to obtain Camina pairs.  Let $G$ be a group and let $N$ be a proper, nontrivial normal subgroup of $G$.  We say $(G,N)$ is a {\it Camina pair} if the conjugacy class of every element of $G \setminus N$ is a union of $N$-cosets and the normal subgroup $N$ is said to be a {\it Camina kernel}.

The following result surveys some of the known results about Frobenius groups and their connection to Camina pairs.

\begin{thm}\label{equivFrob}
Let $N$ be a normal subgroup of $G$. The following are equivalent:
\begin{enumerate}[label={\bf(\arabic*)}]
\item The group $G$ is a Frobenius group with Frobenius kernel $N$.
\item Every element $1\ne x\in N$ satisfies $C_G(x)\le N$.
\item {\normalfont(\cite[Lemma 1]{acamina})}\hspace{\labelsep}
The pair $(G,N)$ is a Camina pair and $(\norm {G:N}, \norm {N}) = 1$.
\item {\normalfont(\cite[Proposition 3.2]{genFrobGps1})}\hspace{\labelsep}
The pair $(G,N)$ is a Camina pair and $G$ splits over $N$.
\end{enumerate}
\end{thm}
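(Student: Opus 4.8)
The plan is to establish $(1)\Leftrightarrow(2)$ directly and then to read off $(1)\Leftrightarrow(3)$ and $(1)\Leftrightarrow(4)$ from the cited results of \cite{acamina} and \cite{genFrobGps1}. Throughout I would assume $1<N<G$, since a Frobenius kernel is by definition a proper, nontrivial normal subgroup (and conditions (1)--(4) only match up in this range). The implication $(1)\Rightarrow(2)$ requires nothing new: it is recorded in the introduction that a Frobenius group with kernel $N$ satisfies $C_G(x)\le N$ for every $x\in N\setminus\{1\}$.

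The substance is $(2)\Rightarrow(1)$, and the first step is a counting argument forcing coprimality. Fix $1\ne x\in N$. Since $C_G(x)\le N$ we have $C_G(x)=C_N(x)$, so the $G$-class of $x$ has size $\norm{G:C_G(x)}=\norm{G:N}\cdot\norm{N:C_N(x)}=\norm{G:N}\cdot\norm{x^N}$. Because $N\lhd G$, each such class lies inside $N$, and these classes partition $N\setminus\{1\}$; summing their sizes yields $\norm{N}-1=\norm{G:N}\sum_i\norm{x_i^{\,N}}$, so $\norm{G:N}$ divides $\norm{N}-1$ and therefore $(\norm{G:N},\norm{N})=1$. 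With coprimality secured I would invoke the Schur--Zassenhaus theorem to produce a complement $H$ for $N$ in $G$ with $\norm{H}=\norm{G:N}$ coprime to $\norm{N}$. Finally, if some $1\ne h\in H$ were to centralize a nontrivial $x\in N$, then $h\in C_G(x)\le N$ would force $h\in H\cap N=1$; hence $H$ acts fixed-point-freely on $N\setminus\{1\}$ and $G$ is a Frobenius group with kernel $N$, giving $(1)$.

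For the remaining conditions I would simply cite: the equivalence $(1)\Leftrightarrow(3)$ is \cite[Lemma 1]{acamina}, and $(1)\Leftrightarrow(4)$ is \cite[Proposition 3.2]{genFrobGps1}. Together with $(1)\Leftrightarrow(2)$ this closes all four equivalences.

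I expect the only genuine obstacle to be the coprimality step inside $(2)\Rightarrow(1)$: everything hinges on the observation that the hypothesis $C_G(x)\le N$ collapses the $G$-centralizer onto the $N$-centralizer, so that every $G$-class meeting $N\setminus\{1\}$ acquires a factor of $\norm{G:N}$, and reading the class equation on $N$ modulo $\norm{G:N}$ then delivers $(\norm{G:N},\norm{N})=1$. Once that is in place, the complement is handed over by Schur--Zassenhaus and the fixed-point-free property comes for free from the hypothesis, so the rest of the argument is routine.
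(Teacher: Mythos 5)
Your proposal is correct, but note that the paper does not actually prove Theorem~\ref{equivFrob} at all: it presents the theorem as a survey of known facts and explicitly defers all proofs to the cited literature (\cite{acamina}, \cite{genFrobGps1}, etc.). So there is nothing in the paper to compare your argument against except the citations for (3) and (4), which you handle identically. Your self-contained proof of $(1)\Leftrightarrow(2)$ is the standard one and is sound: the hypothesis $C_G(x)\le N$ collapses $C_G(x)$ to $C_N(x)$, so each $G$-class meeting $N\setminus\{1\}$ has size $\norm{G:N}\cdot\norm{x^N}$, the class equation on $N$ gives $\norm{G:N}\mid\norm{N}-1$ and hence coprimality, Schur--Zassenhaus produces a complement $H$, and the hypothesis immediately forces $C_N(h)=1$ for $1\ne h\in H$, which is the fixed-point-free (Frobenius) condition. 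You are also right to insert the standing assumption $1<N<G$, which the theorem statement omits but which is needed for $(1)\Leftrightarrow(2)$ to hold (condition (2) is vacuous for $N=1$ and trivial for $N=G$). The only cosmetic remark is that your coprimality step essentially reproves the direction $(2)\Rightarrow(3)$ of the theorem, so you could alternatively have routed $(2)\Rightarrow(1)$ through \cite[Lemma 1]{acamina} after observing that (2) implies the Camina condition; but your direct argument is more elementary and self-contained, which is a reasonable trade.
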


There are many conditions that are equivalent to the definition of Camina pairs.  We now give one.

\begin{lem}[{\normalfont \cite[Lemma 1]{acamina}}]\label{equivCamPr}
Let $N$ be a normal subgroup of $G$. Then $(G,N)$ is a Camina pair if and only if $\norm{C_G(g)}=\norm{C_{G/N}(gN)}$ for  every element $g\in G\setminus N$.
\end{lem}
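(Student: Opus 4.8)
The plan is to fix an element $g \in G \setminus N$ and compare the conjugacy class of $g$ in $G$ with that of $gN$ in $G/N$ through the canonical projection $\pi \colon G \to G/N$. First I would note that $\pi$ maps $\mathrm{cl}_G(g)$ onto $\mathrm{cl}_{G/N}(gN)$: since $\pi(g^x) = (gN)^{xN}$ and every element of $G/N$ has the form $xN$, the restriction of $\pi$ to $\mathrm{cl}_G(g)$ is a surjection onto $\mathrm{cl}_{G/N}(gN)$. The fiber over a point $hN$ is $\mathrm{cl}_G(g) \cap hN$, which is contained in a single coset of $N$ and hence has size at most $\norm{N}$.

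The heart of the argument is that the Camina condition for $g$ is equivalent to a single cardinality identity. The class $\mathrm{cl}_G(g)$ is a union of $N$-cosets exactly when every fiber $\mathrm{cl}_G(g) \cap hN$ equals the whole coset $hN$. Since the fibers partition $\mathrm{cl}_G(g)$, are all nonempty by surjectivity, and each has size at most $\norm{N}$, every fiber is a full coset if and only if
\[
\norm{\mathrm{cl}_G(g)} = \norm{N} \cdot \norm{\mathrm{cl}_{G/N}(gN)}.
\]
In particular $\norm{\mathrm{cl}_G(g)} \le \norm{N} \cdot \norm{\mathrm{cl}_{G/N}(gN)}$ always holds, with equality precisely when the conjugacy class of $g$ is a union of $N$-cosets.

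Finally I would pass from classes to centralizers via the index formulas $\norm{\mathrm{cl}_G(g)} = \norm{G : C_G(g)}$ and $\norm{\mathrm{cl}_{G/N}(gN)} = \norm{G/N : C_{G/N}(gN)}$, together with $\norm{G/N} = \norm{G}/\norm{N}$. A direct substitution shows that the displayed identity is equivalent to $\norm{C_G(g)} = \norm{C_{G/N}(gN)}$. Letting $g$ range over all of $G \setminus N$ then yields the stated equivalence. I expect the only delicate point to be the bookkeeping in the middle step: checking that the Camina condition is genuinely captured fiber by fiber, and that a nonempty fiber sitting inside a single coset can fail to be full only by being strictly smaller, so that the global count detects fullness exactly. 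The index computations in the last step are routine.
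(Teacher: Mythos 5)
Your argument is correct, and it is a complete, self-contained proof; the paper itself does not prove this lemma but simply cites it to Camina's original article (Lemma~1 of \cite{acamina}), so there is no in-paper proof to compare against line by line. Your route --- projecting $\mathrm{cl}_G(g)$ onto $\mathrm{cl}_{G/N}(gN)$, observing that the nonempty fibers are exactly the sets $\mathrm{cl}_G(g)\cap hN$ for $hN\in\mathrm{cl}_{G/N}(gN)$, and reading off that equality in $\norm{\mathrm{cl}_G(g)}\le\norm{N}\cdot\norm{\mathrm{cl}_{G/N}(gN)}$ holds precisely when every such fiber is a full coset --- is the standard elementary counting proof, and the translation to centralizers via $\norm{\mathrm{cl}_G(g)}=\norm{G:C_G(g)}$ is routine as you say. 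Note that your inequality, rewritten, is exactly $\norm{C_G(g)}\ge\norm{C_{G/N}(gN)}$, i.e.\ the bound the paper later invokes from \cite[Corollary 2.24]{MI76}; the common alternative proof of that bound (and of this lemma) goes through the second orthogonality relation $\sum_\chi\norm{\chi(g)}^2=\norm{C_G(g)}$ together with the inclusion $\irr(G/N)\subseteq\irr(G)$, which yields the same equality criterion but with more machinery, so your purely combinatorial version is arguably preferable here. The only caveat, which is an imprecision in the statement rather than in your proof: as the paper defines it, a Camina pair requires $N$ to be proper and nontrivial, while the centralizer condition is vacuous or automatic when $N=G$ or $N=1$; your per-element equivalence is exactly right once those degenerate cases are excluded.
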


For proofs of the above results, and other information about Camina pairs, we refer the reader to \cite{acamina,genFrobGps1,genFrobGps2,NM14}.

The idea of Camina pairs has been generalized even further.  Let $M \le N$ be proper, nontrivial normal subgroups of a group $G$.  We say $(G,N,M)$ is a {\it Camina triple} if the conjugacy class of every element of $G \setminus N$ is a union of $M$-cosets.  It is not difficult to see that $(G,N)$ is a Camina pair if and only if $(G,N,N)$ is a Camina triple.  Mattarei first considered these in his Ph. D. dissertation \cite{SM92}, although the term \enquote{Camina triple} never appears in  \cite{SM92}.  As far as we can tell, the term \enquote{Camina triple} first appears in \cite{weak}.  Camina triples are studied extensively by Mlaiki in \cite{NM14}, where he proves several results about Camina triples that are analogous to known results about Camina pairs. In fact, some of these are generalizations of results of the second author appearing in \cite{MLvos09}.  One such result is the following.

\begin{lem}[{\normalfont cf. \cite[Theorem 4]{NM14}}]\label{equivCam}
Let $M$ and $N$ be normal subgroups of $G$ satisfying $M\le N$. Then $(G,N,M)$ is a Camina triple if and only if $\norm {C_G(g)} = \norm{C_{G/M}(gM)}$ for every element $g\in G\setminus N$.
\end{lem}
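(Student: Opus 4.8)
The plan is to reduce the coset condition defining a Camina triple to a cardinality comparison between the conjugacy class of $g$ in $G$ and its image in $G/M$, and then to translate that comparison into the stated centralizer equality via the orbit–stabilizer theorem. Write $\pi\colon G\to G/M$ for the canonical projection, fix $g\in G\setminus N$, and let $g^G$ denote the conjugacy class of $g$ in $G$ and $(gM)^{G/M}$ that of $gM$ in $G/M$. The first observation is that $\pi$ carries $g^G$ onto $(gM)^{G/M}$, since $\pi(g^x)=(gM)^{xM}$ and $xM$ ranges over all of $G/M$; in particular $g^G\subseteq\pi^{-1}\bigl((gM)^{G/M}\bigr)$, and the right-hand side is a union of cosets of $M$ because it is a full preimage under $\pi$.

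The key step is the equivalence that $g^G$ is a union of cosets of $M$ if and only if $g^G=\pi^{-1}\bigl((gM)^{G/M}\bigr)$. I would argue this fiberwise. For each $\bar h=hM\in(gM)^{G/M}$ the fiber $g^G\cap hM$ is nonempty, because $\pi(g^G)$ is all of $(gM)^{G/M}$, and it has cardinality between $1$ and $\norm M$. Saying that $g^G$ is a union of $M$-cosets is exactly the statement that every such fiber is a full coset, and this holds if and only if $\norm{g^G}=\norm M\cdot\norm{(gM)^{G/M}}$, i.e.\ if and only if the inclusion $g^G\subseteq\pi^{-1}\bigl((gM)^{G/M}\bigr)$ is an equality.

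It then remains to rewrite the cardinality identity using the orbit–stabilizer theorem. Since $\norm{g^G}=\norm G/\norm{C_G(g)}$ and $\norm{(gM)^{G/M}}=(\norm G/\norm M)/\norm{C_{G/M}(gM)}$, the equality $\norm{g^G}=\norm M\cdot\norm{(gM)^{G/M}}$ becomes $\norm G/\norm{C_G(g)}=\norm G/\norm{C_{G/M}(gM)}$, which is equivalent to $\norm{C_G(g)}=\norm{C_{G/M}(gM)}$. Applying this equivalence to every $g\in G\setminus N$ yields the lemma, as $(G,N,M)$ is a Camina triple precisely when $g^G$ is a union of $M$-cosets for all such $g$. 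I expect the only point requiring care to be the fiberwise step: one must verify both that each fiber over the image is nonempty and that being a union of cosets is exactly the condition that no fiber is proper, so that the upper bound on $\norm{g^G}$ is attained precisely in the Camina-triple situation.
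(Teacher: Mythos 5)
Your argument is correct and complete. The paper itself gives no proof of this lemma---it is quoted from \cite[Theorem 4]{NM14}---so there is nothing internal to compare against, but your route is the standard one: the containment $g^G\subseteq\pi^{-1}\bigl((gM)^{G/M}\bigr)$ gives the a priori inequality $\lvert C_G(g)\rvert\ge\lvert C_{G/M}(gM)\rvert$ (the same fact the paper invokes from \cite[Corollary 2.24]{MI76} in Lemma~\ref{camfrob}), and your fiberwise analysis correctly identifies equality with the condition that $g^G$ is a union of $M$-cosets. No gaps.
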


In some ways, both $N$ and $M$ are {\it trying} to be a Camina kernel when $(G,N,M)$ is a Camina triple. Unfortunately, one cannot expect to say anything about the structure of the quotient group $N/M$ in general. In fact, it is not difficult to see that if $(G,N)$ is a Camina pair and $A$ is any group, then $(G\times A, N\times A, N)$ is a Camina triple. 

As mentioned above, a Frobenius group $G$ is a group possessing a nontrivial normal subgroup $N$ so that $C_G(x)\le N$ for each $1\ne x\in N$. When $(G,N)$ is a Camina pair, we already have $\norm{C_G(g)}=\norm{C_{G/N}(gN)}$ for every $g\in G\setminus N$. Thus a Camina pair $(G,N)$ is a Frobenius group exactly when the canonical homomorphism $C_G(g)\to C_{G/N}(gN)$ is an isomorphism. Applying this idea in the situation of Lemma~\ref{equivCam} leads to the following generalization of a Frobenius group.

Let $M$ and $N$ be nontrivial proper normal subgroups of $G$ with $M\le N$. If $C_G(x)\le N$ for every element $1\ne x\in M$, we will call $(G,N,M)$ a {\it Frobenius triple}. 

Using the idea preceding the definition, we show that Frobenius triples are, in fact, examples of Camina triples.

\begin{lem}\label{camfrob}
A Frobenius triple is a Camina triple.
\end{lem}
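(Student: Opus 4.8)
The plan is to reduce everything to a centralizer count via Lemma~\ref{equivCam}, which says that $(G,N,M)$ is a Camina triple exactly when $\norm{C_G(g)}=\norm{C_{G/M}(gM)}$ for every $g\in G\setminus N$. The hypotheses that $M\le N$ be nontrivial proper normal subgroups are shared by both notions, so the only content is this order equality, which I will verify for an arbitrary $g$ lying outside $N$. The natural object to study is the canonical homomorphism $\rho\colon C_G(g)\to C_{G/M}(gM)$ given by $c\mapsto cM$; it is well defined because $c\in C_G(g)$ forces $cM$ to commute with $gM$, and its kernel is $C_G(g)\cap M=C_M(g)$. I will establish the inequalities $\norm{C_G(g)}\le\norm{C_{G/M}(gM)}$ and $\norm{C_G(g)}\ge\norm{C_{G/M}(gM)}$ separately.

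For the first inequality I invoke the defining property of a Frobenius triple. If some nontrivial $x\in M$ commuted with $g$, then $g\in C_G(x)\le N$, contradicting $g\notin N$; hence $C_M(g)=1$. Thus $\rho$ is injective and $\norm{C_G(g)}\le\norm{C_{G/M}(gM)}$. This is the only place the Frobenius hypothesis enters.

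For the second inequality, which needs no assumption on $g$, I would run a short counting argument. Put $K=\{h\in G:hgh^{-1}\in gM\}$. A routine check shows that $K$ is a subgroup containing $M$, that $C_{G/M}(gM)=K/M$, and that $C_G(g)\le K$; in particular the stabilizer of $g$ under the conjugation action of $K$ is $C_G(g)$. Because every $K$-conjugate of $g$ lies in the coset $gM$, the class $g^K$ has at most $\norm{M}$ elements, so $\norm{K:C_G(g)}=\norm{g^K}\le\norm{M}$. Rearranging and using $\norm{C_{G/M}(gM)}=\norm{K:M}$ gives $\norm{C_G(g)}\ge\norm{K:M}=\norm{C_{G/M}(gM)}$.

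Putting the two bounds together yields $\norm{C_G(g)}=\norm{C_{G/M}(gM)}$ for all $g\in G\setminus N$, and Lemma~\ref{equivCam} identifies $(G,N,M)$ as a Camina triple. The heart of the matter is really the second inequality: the Frobenius condition only buys injectivity of $\rho$ (the easy upper bound), so one must separately observe that the order inequality $\norm{C_G(g)}\ge\norm{C_{G/M}(gM)}$ holds in complete generality. Once both bounds are in place, the injectivity of $\rho$ together with the equality of orders shows $\rho$ is an isomorphism, exactly matching the heuristic recorded just before the definition of a Frobenius triple.
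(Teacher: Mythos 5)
Your proof is correct and follows essentially the same route as the paper: show $C_M(g)=1$ makes the canonical map $C_G(g)\to C_{G/M}(gM)$ injective, combine with the general inequality $\norm{C_G(g)}\ge\norm{C_{G/M}(gM)}$, and conclude via Lemma~\ref{equivCam}. The only difference is that you prove that general inequality by an explicit counting argument with $K=\{h\in G: g^h\in gM\}$, whereas the paper simply cites it (Isaacs, Corollary 2.24); your verification of that step is sound.
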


\begin{proof}
Let $(G,N,M)$ be a Frobenius triple and let $g\in G\setminus N$. Then $C_M(g)=1$, when means that the natural homomorphism $C_G(g)\to C_{G/M}(gM)$ is injective. Since $\norm{C_G(g)}\ge\norm{C_{G/M}(gM)}$ (e.g., see \cite[Corollary 2.24]{MI76}), we have $\norm{C_G(g)}=\norm{C_{G/M}(gM)}$. Thus $(G,N,M)$ is a Camina triple by Lemma~\ref{equivCam}.
\end{proof}

We know from Theorem~\ref{equivFrob} that a Camina pair $(G,N)$ is a Frobenius group if and only if $(\norm {G:N}, \norm {N}) = 1$. We now prove the equivalence of the first two statements of Theorem~\ref{partconverse}, which shows that a similar relationship exists between Camina pairs and Frobenius triples.

\begin{lem}\label{partconverse 1}
A Camina triple $(G,N,M)$ is a Frobenius triple if and only if $(\norm {G:N}, \norm {M}) = 1$.
\end{lem}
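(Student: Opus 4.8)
The plan is to prove the two implications separately; the forward direction is a short counting argument, while the converse is where the real work lies. For the forward direction, I would suppose $(G,N,M)$ is a Frobenius triple, so that $C_G(x)\le N$ for every $1\ne x\in M$. Since $M\lhd G$, it is a union of $G$-conjugacy classes, one of which is $\{1\}$. For any $1\ne x\in M$ the inclusion $C_G(x)\le N$ gives that $\norm{G:N}$ divides $\norm{G:C_G(x)}$, so every nontrivial class contained in $M$ has size divisible by $\norm{G:N}$. Summing the class equation restricted to $M$ then yields $\norm{M}\equiv 1\pmod{\norm{G:N}}$, whence $(\norm{G:N},\norm{M})=1$. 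I expect this direction to be routine.

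For the converse, I would assume $(G,N,M)$ is a Camina triple with $(\norm{G:N},\norm{M})=1$ and show $C_M(g)=1$ for every $g\in G\setminus N$, which is equivalent to the Frobenius-triple condition $C_G(x)\le N$ for all $1\ne x\in M$. Fix such a $g$. Here Lemma~\ref{equivCam} supplies the key numerical input $\norm{C_G(g)}=\norm{C_{G/M}(gM)}$, and the natural map $C_G(g)\to C_{G/M}(gM)$ has kernel exactly $C_M(g)$. Thus it suffices to prove that this map is surjective, for then $C_M(g)$ has trivial order.

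The first step toward surjectivity is a reduction that manufactures usable coprimality. Writing $g'$ for the product of the prime-power parts of $g$ attached to primes not dividing $\norm{M}$, the hypothesis $(\norm{G:N},\norm{M})=1$ forces the remaining prime-power parts of $g$ to lie in $N$, so $g'N=gN$ and in particular $g'\notin N$; moreover $\langle g'\rangle$ has order prime to $\norm{M}$ and $C_M(g)\le C_M(g')$. Hence it is enough to treat an element $g$ whose order is coprime to $\norm{M}$, so that $\langle g\rangle$ acts coprimely on $M$ by conjugation. The main step is then to lift centralizers modulo $M$: given $c\in G$ with $cM\in C_{G/M}(gM)$, the group $M$ acts regularly on the coset $cM$ by right multiplication while $\langle g\rangle$ acts compatibly by conjugation, so a standard coprime-action (Glauberman) argument produces a fixed point $cm\in cM$ with $(cm)^g=cm$, that is, an element of $C_G(g)$ mapping onto $cM$. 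This gives the desired surjectivity, and combined with the equality of orders from Lemma~\ref{equivCam} it forces $C_M(g)=1$.

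The main obstacle is precisely this lifting step in the converse: the coprimality hypothesis is purely numerical, and to convert it into a fixed-point statement I need the genuine group action on the coset $cM$ together with a coprime-action theorem. The preliminary reduction to elements of order prime to $\norm{M}$ is exactly what makes that tool available, so I would present it carefully before invoking the coprime-action conclusion.
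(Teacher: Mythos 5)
Your proposal is correct and, in its essential content, matches the paper's proof: the converse direction uses exactly the same reduction (splitting $g$ into its $\pi(M)$- and $\pi(M)'$-parts, noting the former lies in $N$), the same numerical input from Lemma~\ref{equivCam}, and the same coprime-action lifting of centralizers modulo $M$ (the paper cites \cite[Corollary 3.28]{MI08} where you invoke Glauberman directly). The only divergence is cosmetic: for the forward direction the paper picks a nontrivial element of $M\cap Z(P)$ for $P$ a Sylow $p$-subgroup, whereas you count class sizes to get $\norm{M}\equiv 1\pmod{\norm{G:N}}$; both are routine and correct.
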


\begin{proof}
First assume that $(\norm{G:N},\norm{M})=1$. Write $\pi=\pi(M)$ and let $1\ne m\in M$. 	Let $g\in G\setminus N$. We may write $g=xy$ where $o(x)$ is a $\pi$-number and $o(y)$ is a $\pi'$-number and $xy=yx$.  Since $o(xN)$ divides $o(x)$ and $(\norm{G:N},\norm{M})=1$, it follows that $x\in N$ and $y\notin N$. Note that $M$ is solvable (see \cite[Lemma 2.7]{NM14}). Since $(G,N,M)$ is a Camina triple and $y\notin N$, we have $\norm{C_G(y)}=\norm{C_{G/M}(yM)}$. Since $(o(y),\norm{M})=1$ and $M$ is solvable, we see that $C_{G/M}(yM)=C_G(y)M/M$ \cite[Corollary 3.28]{MI08}. These two facts imply that $C_M(y)=1$. Since $y\in\inner{g}$, we deduce that $g\notin C_G(m)$ and so $C_G(m)\le N$. 

Now let $(G,N,M)$ be a Frobenius triple. Let $p$ be a prime dividing $\norm{M}$, and let $P\in\mathrm{Syl}_p(G)$. Then $P \cap M \in \mathrm{Syl}_p(M)$ and $P\cap M\lhd P$. Let $1\ne y\in M\cap Z(P)$. Then $P\le C_G(y)\le N$, so $p$ does not 	divide $\norm{G:N}$.
\end{proof}

%Let $G$ be a group and let $M$ be a nontrivial, proper normal subgroup of $G$.  We define $C (G \mid M) = \langle C_G (x) \mid 1 \ne x \in M \rangle$.  It is not difficult to see that if $N$ is a proper normal subgroup of $G$ containing $M$, then $(G,N,M)$ is a Frobenius triple if and only if $C (G \mid M) \le N$.  Furthermore, note that $M$ will be a Frobenius kernel if and only if $C (G \mid M) = M$.
 
\section{Connection to Frobenius--Wielandt triples}

In this section, we consider another generalization of Frobenius groups due to Wielandt.  First, we show that if we have a Frobenius triple, then we can find a \enquote{pseudo-complement.}  That is if $(G,N,M)$ is a Frobenius triple, then there exists a subgroup $H$ satisfying $G = HN$ and $H \cap M = 1$.  If $N = M$, then $H$ would be a complement.  Note that we only need the index of $N$ to be coprime to the order of $M$ for $H$ to exist.  Hence, this is really a generalization of the existence part of the Schur--Zassenhaus theorem.

\begin{lem}\label{psuedocomp}
Let $M$ and $N$ be normal subgroups of $G$ so that $(|G:N|,|M|) = 1$.  Then $M\le N$ and there exists a subgroup $H \le G$ so that $G=HN$ and $H\cap M=1$.  Moreover, if $(G,N,M)$ is a Frobenius triple, then $(HM,(H\cap N)M,M)$ is also a Frobenius triple.
\end{lem}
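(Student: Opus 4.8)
The plan is to treat the three assertions separately, expecting the existence of $H$ to be the real work. The containment $M\le N$ is immediate: the subgroup $MN/N\cong M/(M\cap N)$ of $G/N$ has order dividing both $|M|$ and $|G:N|$, so the hypothesis $(|G:N|,|M|)=1$ forces $MN=N$, i.e.\ $M\le N$.

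For the pseudo-complement, I would set $\pi=\pi(M)$ and observe that the hypothesis $(|G:N|,|M|)=1$ says precisely that $G/N$ is a $\pi'$-group. The plan is to obtain $H$ from a relative form of the existence part of Schur--Zassenhaus, proved by induction on $|G|$: whenever $K\lhd G$ with $G/K$ a $\pi'$-group, $G$ possesses a $\pi'$-subgroup $H$ with $HK=G$. Granting this and taking $K=N$ produces a $\pi'$-subgroup $H$ with $HN=G$, and since $M$ is a $\pi$-group the intersection $H\cap M$ is simultaneously a $\pi$- and a $\pi'$-group, hence trivial, giving $H\cap M=1$ for free. To prove the inductive claim, if $K$ is itself a $\pi'$-group then so is $G$ and $H=G$ works; otherwise I choose a prime $p\in\pi$ dividing $|K|$ and $S\in\mathrm{Syl}_p(K)$. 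Since $p\nmid|G:K|$ we have $S\in\mathrm{Syl}_p(G)$, so the Frattini argument gives $G=K\,N_G(S)$. If $N_G(S)<G$, I apply the inductive hypothesis to $N_G(S)$ with the normal subgroup $N_G(S)\cap K$, whose quotient is again isomorphic to $G/K$, and push the resulting $\pi'$-subgroup up to $G$ via $N_G(S)K=G$. If instead $S\lhd G$, I apply induction in $G/S$ and lift the resulting $\pi'$-subgroup through the preimage, where $S$ is a normal $p$-subgroup with $\pi'$-quotient, so Schur--Zassenhaus furnishes a $\pi'$-complement.

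The hard part will be exactly this last existence statement in the non-solvable setting: a genuine Hall $\pi'$-subgroup of $G$ need not exist at all, so I cannot simply quote Hall's theorem. The point of peeling off a single prime $p$ with the Frattini argument is precisely to sidestep this, reducing each inductive step to an honest Schur--Zassenhaus splitting for one prime, for which no solvability is needed.

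Finally, for the ``moreover'' I would set $G_0=HM$ and $N_0=(H\cap N)M$. Since $M\le N$, the modular law yields $N_0=(H\cap N)M=HM\cap N=G_0\cap N$, so $N_0\lhd G_0$ and $M\le N_0$; also $M\lhd G_0$ because $M\lhd G$. Nontriviality of $M$ (and hence of $N_0\supseteq M$) is given, while properness follows from $G=HN$ with $N<G$: this forces $H\ne1$, so $H\not\le M$ and $M<G_0$, and $H\not\le N$, so $G_0\not\le N$ and $N_0<G_0$. For the fixed-point-free condition, take $1\ne x\in M$; the Frobenius triple hypothesis gives $C_G(x)\le N$, whence $C_{G_0}(x)=G_0\cap C_G(x)\le G_0\cap N=N_0$. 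This is exactly the defining property, so $(HM,(H\cap N)M,M)$ is a Frobenius triple.
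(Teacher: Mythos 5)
Your proof is correct. The first assertion ($M\le N$) and the final \enquote{moreover} are handled exactly as in the paper (modular law to identify $(H\cap N)M=HM\cap N$, then restriction of centralizers); you are in fact a bit more careful than the paper in verifying that $M$ and $(H\cap N)M$ are nontrivial and proper in $HM$. The genuine divergence is in the construction of the pseudo-complement. The paper inducts on $|G|$ by producing a maximal subgroup $K$ not containing $N$ --- ruling out $N\le\Phi(G)$ via the fact that a prime dividing $|\Phi(G)|$ must divide $|G:\Phi(G)|$ --- and then passing to the triple $(K,K\cap N,K\cap M)$; the $H$ it produces satisfies only $H\cap M=1$. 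You instead observe that $G/N$ is a $\pi'$-group for $\pi=\pi(M)$ and prove the covering form of Schur--Zassenhaus: if $G/K$ is a $\pi'$-group then $G=HK$ for some $\pi'$-subgroup $H$. Your induction (Frattini argument on $S\in\mathrm{Syl}_p(K)$ for $p\in\pi$, reducing either to $N_G(S)<G$ or to $G/S$ followed by a single-prime Schur--Zassenhaus splitting inside the preimage) is airtight and is the standard proof of that result, so no solvability or Hall-type existence theorem is needed, exactly as you note. What your route buys is a strictly stronger conclusion --- $H$ has order coprime to $|M|$, rather than merely trivial intersection with $M$ --- and it sidesteps the paper's somewhat terse appeal to properties of the Frattini subgroup; what the paper's route buys is that it stays entirely inside the subgroup lattice between $M$, $N$, and maximal subgroups, without invoking Sylow theory beyond the Frattini-subgroup fact. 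Either argument establishes the lemma.
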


\begin{proof}
The first statement is clear since $NM/N\cong M/(M\cap N)$ and $(\norm{G:N},\norm{M})=1$. We prove the second statement by induction on $\norm{G}$. First we claim that there exists a maximal subgroup of $G$ not containing $N$. To see this, suppose on the contrary that $N\le\Phi(G)$. Then every prime divisor of $\norm{M}$ divides $\norm{\Phi(G)}$ and does not divide $\norm{G:\Phi(G)}$, which is impossible. Thus we may find a maximal subgroup $K$ not containing $N$, as claimed. If $M\cap K=1$, then we are done. So we assume that $M\cap K>1$. Then $\norm{G:N}=\norm{K:K\cap N}$ is relatively prime to $\norm{K\cap M}$, so by the inductive hypothesis there exists $H\le K$ so that $K=H(K\cap N)$ and $H\cap M=1$. Thus $G=NK=NH$ and $H\cap M=1$, as desired. 
	
Suppose $(G,N,M)$ is a Frobenius triple; so we have $C_{HM}(x)\le HM\cap N=(H\cap N)M$ for every element $1\ne x\in M$. Thus $(HM,(H\cap N)M,M)$ is a Frobenius triple.
\end{proof}

When $M$ and $N$ are normal subgroups of $G$ so that $(\norm{G:N},\norm{M})=1$, we call the subgroup $H$ of Lemma~\ref{psuedocomp} a {\it pseudo-complement} for $(G,N,M)$.

In \cite{FWgps}, Wielandt studies groups $G$ that have a nontrivial, proper subgroup $H$ and a normal subgroup $L$ of $H$ that is proper in $H$ for which $H^g \cap H \le L$ for every element $g \in G\setminus H$. In this situation, Espuelas \cite{espuelas} says that $(G,H,L)$ is a Frobenius-Weilandt group. To be consistent with our notation, we will instead say that $(G,H,L)$ is a {\it Frobenius-Wielandt triple}. If $(G,H,L)$ is a Frobenius--Wielandt triple, there is a unique normal subgroup $N$ of $G$ satisfying $G = NH$ and $N \cap H=L$. This subgroup is given by $N = G \setminus \bigcup_{g\in G} (H\setminus L)^g$  and is called the {\it Frobenius--Wielandt kernel}. Note that the uniqueness of $N$ depends on $L$.  On the other hand, observe that $L$ is not uniquely defined; one can easily replace $L$ by any normal subgroup of $H$ that contains $L$. For this reason Espuelas defines the {\it Frobenius-Wielandt complement} to be the quotient $H/L$.

Our next result shows that Frobenius triples and Frobenius--Wielandt groups are intimately connected.

\begin{thm}\label{FT FW equiv}
Let $M \le N$ be proper, nontrivial normal subgroups of $G$ and assume that $H$ is a pseudo-complement for $(G,N,M)$. Then $(HM,(H \cap N)M,M)$ is a Frobenius triple if and only if $(HM,H,H \cap N)$ is a Frobenius--Wielandt triple. 
\end{thm}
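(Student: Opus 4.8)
The plan is to reduce both conditions to statements inside $\Gamma := HM$ and then prove the two implications separately, the forward one by a commutator computation and the backward one by a Schur--Zassenhaus conjugacy argument; morally this is the familiar dichotomy for genuine Frobenius groups between the fixed-point-free action of the kernel and the malnormality $H\cap H^a=1$ of the complement. Write $L := H\cap N$ and $K := LM = (H\cap N)M$. Since $M\lhd G$ and $H\cap M = 1$, the subgroup $H$ is a complement to $M$ in $\Gamma$, so every $a\in\Gamma$ factors as $a = xm$ with $x\in H$, $m\in M$, and $a\in H$ exactly when $m = 1$. One checks routinely that $K\lhd\Gamma$, that $H\cap K = L$, and (using $G = HN$) that $\Gamma/K\cong H/L$ has order $\norm{G:N}$, which is coprime to $\norm{M}$ by hypothesis. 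Setting $\pi := \pi(M)$, this makes $\Gamma/K$ a $\pi'$-group. After the routine verifications that $M, K, H, L$ satisfy the propriety and normality requirements, the theorem reduces to the equivalence
\[
\bigl[\,C_\Gamma(x)\le K \text{ for all } 1\ne x\in M\,\bigr]\iff\bigl[\,H\cap H^a\le L \text{ for all } a\in\Gamma\setminus H\,\bigr].
\]

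For the forward direction I would use the classical Frobenius commutator trick. Given $a = xm\in\Gamma\setminus H$ (so $m\ne 1$) and $u\in H\cap H^a$, the condition $u\in H^a$ means $aua^{-1}\in H$; conjugating through $a = xm$ and using $x\in H$ shows $mum^{-1}\in H$, so that $mum^{-1}u^{-1}$ lies in $H$. But $mum^{-1}u^{-1} = m\cdot{}^{u}(m^{-1})\in M$ because $M\lhd\Gamma$, whence $mum^{-1}u^{-1}\in H\cap M = 1$ and $u$ centralizes $m$. Thus $u\in C_\Gamma(m)\le K$ by the Frobenius triple hypothesis, and since $u\in H$ we get $u\in H\cap K = L$, proving the Frobenius--Wielandt condition.

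For the backward direction, which is the heart of the proof, take $1\ne x\in M$ and $g\in C_\Gamma(x)$; the goal is $g\in K$. Because $\Gamma/K$ is a $\pi'$-group, the $\pi$-part of $g$ automatically lies in $K$, so it suffices to treat the $\pi'$-part, which still centralizes $x$; thus I may assume $g$ is a nontrivial $\pi'$-element. In $E := \inner{g}M$ the subgroup $M$ is a normal Hall $\pi$-subgroup, and both $\inner{g}$ and $H\cap E$ are complements to $M$ in $E$ (that $H\cap E$ is a complement follows since $g = hm$ forces $h = gm^{-1}\in H\cap E$, so $(H\cap E)M = E$). As $E/M$ is cyclic, Schur--Zassenhaus gives $e\in E$ with $\inner{g}^e\le H$, so $g^e\in H$. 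Now $g^e$ centralizes $x^e\in M\setminus\{1\}$, and since $x^e\notin H$ the Frobenius--Wielandt hypothesis applied with $a = x^e$ yields $g^e\in H\cap H^{x^e}\le L\le K$; normality of $K$ then gives $g\in K$.

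The main obstacle is precisely the step in the backward direction of replacing $g$ by a conjugate lying in $H$: the Frobenius--Wielandt condition only constrains intersections $H\cap H^a$, so one must first manoeuvre the centralizing element into $H$ before it can be exploited, and it is here that the coprimality $(\norm{G:N},\norm{M}) = 1$ together with the conjugacy part of Schur--Zassenhaus does the essential work. By contrast the forward direction is a short commutator calculation, and the propriety and normality bookkeeping is routine.
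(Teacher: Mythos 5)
Your proof is correct. The implication from the Frobenius triple condition to the Frobenius--Wielandt condition is essentially the paper's argument: the same factorization $a=xm$ with $x\in H$, $m\in M$, and the same commutator computation forcing the relevant element into $H\cap M=1$, so that the intersecting element centralizes $m$ and hence lands in $C_{HM}(m)\cap H=H\cap N$. The converse is where you genuinely diverge. The paper invokes Wielandt's theorem to obtain the Frobenius--Wielandt kernel $K=HM\setminus\bigcup_{x\in M}(H\setminus N)^x$ as a normal subgroup satisfying $HM=HK$ and $H\cap K=H\cap N$, identifies $K=(H\cap N)M$ by an index count, concludes that $HM\setminus(H\cap N)M$ is covered by the sets $(H\setminus N)^x$, and then derives a contradiction from a hypothetical fixed point of some $1\ne x\in M$. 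You instead lean on the coprimality $(\norm{G:N},\norm{M})=1$ built into the pseudo-complement hypothesis: after splitting off the $\pi$-part of a centralizing element (which lies in $(H\cap N)M$ for free since the quotient is a $\pi'$-group), you use the conjugacy part of Schur--Zassenhaus in $\inner{g}M$ to push $g$ into $H$, where the hypothesis $g^e\in H\cap H^{x^e}\le H\cap N$ applies directly; all the intermediate verifications (that $H\cap E$ is a complement, that $x^e\notin H$, that normality of $(H\cap N)M$ lets you conjugate back) check out. Your route avoids Wielandt's (nontrivial) kernel theorem entirely, at the price of using the arithmetic hypothesis; the paper's proof of this direction never touches the coprimality, so it actually establishes the equivalence under the weaker assumption that $H$ merely satisfies $G=HN$ and $H\cap M=1$. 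Both are valid proofs of the theorem as stated.
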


\begin{proof}
First, suppose that $(HM,H,H \cap N)$ is a Frobenius--Wielandt triple; so that $H^x\cap H\le H\cap N$ for every element $1\ne x\in M$. Observe that this implies that $C_H(x)\le H\cap N$ for every $1\ne x\in M$. Let $K$ be the Frobenius--Wielandt kernel. Then $K=HM\setminus\bigcup_{x\in M}(H\setminus N)^x$ and $K$ satisfies $HM=HK$ and $H\cap K=H\cap N$. Since $M\le N$, it follows that $M\le K$. Also, $\norm {HM:K} = \norm {HK:K} = \norm {H:H\cap K} = \norm {H:H\cap N} = \norm{HM:(H\cap N)M}$, which gives $K=(H\cap N)M$. Thus, we see that $HM\setminus N=HM\setminus (H\cap N)M$ is partitioned by the sets $(H\setminus N)^x$ for $x\in M$. Let $g\in HM\setminus N$, let $1\ne x\in M$ and assume that $[g,x]=1$. Then there exists $y\in M$ so that $g\in (H\setminus N)^y$. So $g=h^y$ for some $h\in H\setminus N$ and it follows that $(x^{y^{-1}})^h=x^{y^{-1}}$. Thus $h\in C_H(x^{y^{-1}})\le H\cap N$. But $h\in H\setminus N$, a contradiction. Thus, no such element $g$ exists and so $C_{HM}(x)\le (H\cap N)M$. It follows that $(HM,(H\cap N)M,M)$ is a Frobenius triple.

Now, assume that $(HM,(H\cap N)M,M)$ is a Frobenius triple.  This implies that $H \cap N$ will be a proper, normal subgroup of $H$.  Consider elements $g\in HM$ and $h\in H^g\cap H$. We see that $H^g\cap H=H^x\cap H$ for some element $x\in M$ and so $h=k^x$ for some $k\in H$. Then $[k,x] = k^{-1}k^x = k^{-1}h \in H \cap M = 1$, so $k = h = k^x$. This implies that $x \in C_M(k)$, and so, $h = k \in H \cap N$. Hence $HM$ is a Frobenius--Wielandt group with Frobenius-Wielandt complement $H/(H\cap N)$, as desired.
\end{proof}

As a consequence of Theorem~\ref{FT FW equiv}, we see that every Frobenius triple contains a subgroup that is a Frobenius--Wielandt triple. Let $(G,H,L)$ be a Frobenius--Wielandt triple and suppose that $M$ is a normal subgroup of $G$ satisfying $H\cap M=1$. Then $(HM,H,L)$ is also a Frobenius--Wielandt triple. Let $N\le HM$ be the Frobenius--Wielandt kernel. Then $H\cap N=L$ and so $(HM,(H\cap N)M,M)$ is Frobenius triple by Theorem~\ref{FT FW equiv}. So at least in this situation, a Frobenius--Wielandt triple contains a subgroup that is a Frobenius triple. This is not always possible however. As an example, let $G$ be the general linear group $\mathrm{GL}_2(\mathbb{F}_3)$, let $H\in\mathrm{Syl}_2(G)$, and let $N$ be the special linear group $\mathrm{SL}_2(\mathbb{F}_3)$. Then $H\cap H^x\le H\cap N$ for every $x\in G\setminus N$ and so $(G,H,H\cap N)$ is a Frobenius--Wielandt triple with kernel $N$. However, $Z(G)$ is the unique minimal normal subgroup of $G$ and $Z (G) \le H$.  Hence, there is no normal subgroup $M$ of $G$ with $H \cap M = 1$. 

Under the added assumption that $(G,N,M)$ is a Camina triple, we obtain a result slightly stronger than Theorem~\ref{FT FW equiv}.

\begin{lem}\label{reduction}
Let $(G,N,M)$ be a Camina triple. If $H$ is a pseudo-complement for $(G,N,M)$, then $(G,N,M)$ is a Frobenius triple if and only if $(HM,(H\cap N)M,M)$ is a Frobenius triple.
\end{lem}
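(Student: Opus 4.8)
The plan is to prove the two implications separately, and the observation that unlocks both is that the hypothesis ``$H$ is a pseudo-complement for $(G,N,M)$'' already encodes $(\norm{G:N},\norm{M})=1$ together with $G=HN$ and $H\cap M=1$. This built-in coprimality lets me reduce the statement to results already established in Section~2, rather than arguing from scratch.

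For the forward implication, suppose $(G,N,M)$ is a Frobenius triple. Then $C_G(x)\le N$ for every $1\ne x\in M$, so $C_{HM}(x)=C_G(x)\cap HM\le N\cap HM=(H\cap N)M$, the last equality because $HM\cap N=(H\cap N)M$. Hence $(HM,(H\cap N)M,M)$ is a Frobenius triple. This is precisely the ``Moreover'' clause of Lemma~\ref{psuedocomp}, and the computation makes clear that it applies to any pseudo-complement $H$ and uses neither the Camina hypothesis nor the converse assumption; I would simply cite Lemma~\ref{psuedocomp}.

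For the converse, the main point is that the coprimality carried by the word ``pseudo-complement'' makes Lemma~\ref{partconverse 1} directly applicable: since $(G,N,M)$ is a Camina triple with $(\norm{G:N},\norm{M})=1$, Lemma~\ref{partconverse 1} immediately gives that $(G,N,M)$ is a Frobenius triple. Thus the converse is essentially automatic, and the assumption that $(HM,(H\cap N)M,M)$ is a Frobenius triple is not actually needed. The ``obstacle'' here is therefore conceptual rather than technical: one must recognize that the content of the lemma lies entirely in recombining Lemmas~\ref{psuedocomp} and~\ref{partconverse 1} through the definition of a pseudo-complement, which is why the result is only ``slightly'' stronger than Theorem~\ref{FT FW equiv}.

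If instead one insisted on a self-contained argument for the converse that genuinely invokes the local Frobenius condition, I would try to show that every $g\in G\setminus N$ is $G$-conjugate into $HM$. Since $HM\cap N=(H\cap N)M$, such a conjugate $g^t$ would lie in $HM\setminus(H\cap N)M$, where the assumed Frobenius triple $(HM,(H\cap N)M,M)$ forces $C_M(g^t)=1$; as $C_M(g^t)=C_M(g)^t$ gives $\norm{C_M(g)}=\norm{C_M(g^t)}$, this would yield $C_M(g)=1$ for all $g\in G\setminus N$ and hence the Frobenius triple property. The hard part of this alternative route is exactly the conjugacy claim: passing to $\overline{G}=G/M$ it amounts to showing that $\overline{H}=HM/M$ meets every conjugacy class of $\overline{G}=\overline{H}\,\overline{N}$ lying outside $\overline{N}$, and proving this cleanly seems to require essentially the same coprime and Camina input that Lemma~\ref{partconverse 1} already packages. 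For that reason I would favor the short combination above.
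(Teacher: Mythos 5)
Your forward direction is fine and is exactly the paper's: it is the ``Moreover'' clause of Lemma~\ref{psuedocomp}, via $C_{HM}(x)=C_G(x)\cap HM\le N\cap HM=(H\cap N)M$.

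The converse, however, has a genuine gap: you assume that the phrase ``$H$ is a pseudo-complement for $(G,N,M)$'' carries the coprimality $(\norm{G:N},\norm{M})=1$ as a standing hypothesis, and then dismiss the assumption on $(HM,(H\cap N)M,M)$ as unnecessary. That reading cannot be the intended one. If coprimality were part of the hypothesis, the lemma would be vacuous (both sides of the equivalence would hold unconditionally by Lemma~\ref{partconverse 1}), and, more importantly, the lemma is invoked in the proof of Theorem~\ref{cam psuedosplit} in a situation where only $G=HN$ and $H\cap M=1$ are known and the coprimality is precisely what is \emph{not} available --- indeed, establishing it there by your route would be circular, since ``Camina plus splitting implies Frobenius'' is the very statement Theorem~\ref{cam psuedosplit} is proving. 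So the operative hypothesis is only $G=HN$ and $H\cap M=1$, and the content of the converse is to \emph{extract} the coprimality from the assumed Frobenius triple $(HM,(H\cap N)M,M)$. This is what the paper does, and the missing two lines are: since $(HM,(H\cap N)M,M)$ is a Frobenius triple (hence a Camina triple by Lemma~\ref{camfrob}), Lemma~\ref{partconverse 1} gives $(\norm{HM:(H\cap N)M},\norm{M})=1$; since $H\cap M=1$ and $G=HN$ one computes $\norm{HM:(H\cap N)M}=\norm{H:H\cap N}=\norm{G:N}$, so $(\norm{G:N},\norm{M})=1$; now apply Lemma~\ref{partconverse 1} to the Camina triple $(G,N,M)$. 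Your sketched ``self-contained'' alternative does not close the gap either, since you leave unproved exactly the conjugacy claim it hinges on.
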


\begin{proof}
When $(G,N,M)$ is a Frobenius triple, the fact that $(HM,(H\cap N)M,M)$ is a Frobenius triple follows from Lemma~\ref{psuedocomp}. 

Conversely, suppose $(HM,(H\cap N)M,M)$ is a Frobenius triple.  It follows from Lemma~\ref{partconverse 1} that $(|HM:(H \cap N)M|,|M|) = 1$.  Since $\norm{HM:(H\cap N)M}=\norm{H:H\cap N}=\norm{G:N}$, we have $(|G:N|,|M|) = 1$, and in light of Lemma \ref{partconverse 1} again, we conclude that $(G,N,M)$ is a Frobenius triple.
\end{proof}

Proposition~3.2 of \cite{genFrobGps1} shows that $G$ is a Frobenius group with Frobenius kernel $N$ if and only if $(G,N)$ is a Camina pair and $G$ splits over $N$. Our next result is a generalization of this fact for Camina triples, and its proof completes the proof of Theorem~\ref{partconverse}.   

We note that in general if $H$ is a psuedo-complement, $(G,N,M)$ being a Camina triple does not imply that $(HN,(H \cap N)M,M)$ is necessarily a Camina triple.  Observe that if $p$ is an odd prime, and $G$ is an extra-special $p$-group of order $p^3$ and exponent $p$, then taking $N$ to be a supgroup of order $p^2$ and $H$ a subgroup of order $p$ not contained in $N$, then as we have seen $(G,N,Z(G))$ is a Camina triple with $G = HN$ and $H \cap Z(G) = 1$, but $(HN,Z(G),Z(G)) = (HN, (H \cap N)Z(G),Z(G))$ is not a Camina triple.

\begin{thm}\label{cam psuedosplit}
Let $(G,N,M)$ be a Camina triple. Then $(G,N,M)$ is a Frobenius triple if and only if there exists a subgroup $H \le G$ so that $G = HN$, $H \cap M = 1$, and $(HM,(H \cap N)M,M)$ is a Camina triple. 
\end{thm}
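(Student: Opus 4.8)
The forward implication is immediate from the tools already developed. If $(G,N,M)$ is a Frobenius triple, then Lemma~\ref{partconverse 1} gives $(\norm{G:N},\norm{M})=1$, and Lemma~\ref{psuedocomp} then produces a subgroup $H\le G$ with $G=HN$ and $H\cap M=1$. Thus the entire content of the theorem lies in the converse.

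For the converse the plan is to reduce to a coprimality statement and then quote Lemma~\ref{partconverse 1}. Concretely, I would show that the existence of a subgroup $H$ with $G=HN$ and $H\cap M=1$ forces $(\norm{G:N},\norm{M})=1$; once that is in hand, Lemma~\ref{partconverse 1} immediately upgrades the Camina triple $(G,N,M)$ to a Frobenius triple. Two elementary observations set this up: first, $G=HN$ gives $\norm{G:N}=\norm{H:H\cap N}$, so every prime dividing $\norm{G:N}$ divides $\norm{H}$; second, $H\cap M=1$ means the quotient map $G\to G/M$ is injective on $H$, so each coset of $M$ contains at most one element of $H$.

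To establish coprimality I would argue by contradiction. Suppose a prime $p$ divides both $\norm{G:N}$ and $\norm{M}$. Taking $Q\in\mathrm{Syl}_p(H)$ and extending it to $P\in\mathrm{Syl}_p(G)$, the relation $p\mid\norm{H:H\cap N}$ forces $Q\not\le N$, so we obtain a nontrivial $p$-element $h\in H\setminus N$; since $M\lhd G$ and $p\mid\norm{M}$, the group $P\cap M$ is a nontrivial normal $p$-subgroup of $P$, and because a $p$-element can never act fixed-point-freely on a nontrivial $p$-group we find $1\ne m\in C_M(h)$. Now the Camina hypothesis (Lemma~\ref{equivCam}) applies to $h\in G\setminus N$, giving $\norm{C_G(h)}=\norm{C_{G/M}(hM)}$, so that the natural homomorphism $C_G(h)\to C_{G/M}(hM)$—whose kernel is the nontrivial group $C_M(h)$—cannot be surjective. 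The hard part will be to convert this failure of surjectivity into a genuine contradiction, and this is precisely the step that must exploit $H$: using $H\cap M=1$ one checks that the elements of $C_{G/M}(hM)$ lying in $HM/M$ all lift through $C_H(h)$, which pins the non-liftable elements of $C_{G/M}(hM)$ inside $N/M$, and I would then play this against the coset-union description of the conjugacy class of $h$. To keep the computation linear I would first pass to a $G$-chief factor $L/K$ with $K<L\le M$ and $p\mid\norm{L/K}$, replacing $(G,N,M)$ by the induced Camina triple $(G/K,N/K,L/K)$—which still carries the complement induced by $H$, since $H\cap L\le H\cap M=1$—so that $M$ becomes elementary abelian. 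I expect this final contradiction, rather than the Sylow bookkeeping or the appeal to Lemma~\ref{partconverse 1}, to be the crux of the argument.
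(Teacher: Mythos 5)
Your forward direction is fine and matches the paper. The converse, however, has a genuine gap, and you acknowledge it yourself: you reduce everything to proving $(\norm{G:N},\norm{M})=1$, set up a contradiction via a $p$-element $h\in H\setminus N$ with $C_M(h)\ne 1$, observe that $\norm{C_G(h)}=\norm{C_{G/M}(hM)}$ forces the natural map $C_G(h)\to C_{G/M}(hM)$ to be non-surjective, and then stop --- ``the hard part will be to convert this failure of surjectivity into a genuine contradiction.'' That missing step is the entire theorem; non-injectivity plus non-surjectivity of that map is not by itself contradictory. Moreover, the one concrete claim you make toward closing it --- that the non-liftable elements of $C_{G/M}(hM)$ are ``pinned inside $N/M$'' --- is not justified: an element of $G$ outside $HM$ need not lie in $N$ (you only know $G=HN$), so nothing localizes those elements to $N/M$. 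The passage to a chief factor to make $M$ elementary abelian does not address any of this and is a red herring.

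The irony is that your own correct observation --- every element of $C_{G/M}(hM)$ lying in $HM/M$ lifts through $C_H(h)$, because $[h,k]^m\in M$ forces $[h,k]\in H\cap M=1$ --- closes the argument the moment you first reduce to the case $G=HM$, which is exactly what the paper does via Lemma~\ref{reduction} (replacing $(G,N,M)$ by $(HM,(H\cap N)M,M)$; the converse direction of that lemma recovers the original triple from the coprimality $\norm{HM:(H\cap N)M}=\norm{G:N}$). Once $G=HM$, every element of $C_{G/M}(hM)$ lifts, so $C_G(h)=C_H(h)$ and $C_M(h)\le H\cap M=1$, contradicting your Sylow-theoretic choice of $h$ --- no chief factors needed. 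The paper itself argues slightly differently after the same reduction: for $g\in H\setminus N$ and $z\in M$ the Camina condition supplies $y$ with $[g,y]=z$; writing $y=hm$ gives $[g,h]\in H\cap M=1$ and hence $z=[g,m]$, so $m\mapsto[g,m]$ maps $M$ onto $M$ and $C_M(g)=1$; it then verifies $H^x\cap H\le H\cap N$ and concludes via the Frobenius--Wielandt equivalence of Theorem~\ref{FT FW equiv}. Either route works, but as written your proposal does not reach a proof.
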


\begin{proof}
If $(G,N,M)$ is a Frobenius triple, then there exists a subgroup $H\le G$ so that $G = NH$ and $H \cap M = 1$ by Lemma~\ref{psuedocomp}.  By Lemma \ref{reduction}, we see that $(HM,(H\cap N)M,M)$ is a Frobenius triple.  Applying Lemma \ref{camfrob}, we deduce that $(HM,(H\cap N)M,M)$ is a Camina triple.

Conversely, assume that there exists a subgroup $H \le G$ so that $G = HN$, $H \cap M=1$, and $(HM,(H\cap N)M,M)$ is a Camina triple.  Using Lemma~\ref{reduction}, we see that to prove that the triple $(G,N,M)$ is a Frobenius triple, it suffices to prove that the Camina triple $(HM,(H \cap N)M, M)$ is a Frobenius triple.  Thus, we may assume that $G = HM$ and $N = (H  \cap N)M$.  Note that we are using the assumption that $(HM,(H \cap N)M,M)$ is a Camina triple to make this replacement.  

Fix the elements $g \in H \setminus N$ and $z \in M$. Since $(G,N,M)$ is a Camina triple, there exists an element $y \in G$ so that $[g,y] = z$ (see \cite[Theorem 2.1 (v)]{NM14}).  Write $y = hm$, where $h \in H$ and $m \in M$. Then $[g,y] = [g,m] [g,h]^m$.  Since $g, h \in H$, we have $[g,h] \in H$.  On the other hand, we know $[g,y] \in M$ and as $m \in M$, we have $[g,m] \in M$.  We determine that $[g,h]^m = [g,m]^{-1} [g,y] \in M$, and so it follows that $[g,h] \in H \cap M=1$. Thus $[g,m]=z$. This means that $\{ [g,m] \mid m \in M \} = M$, and so, $g$ acts fixed-point-freely on $M$. Consider elements $1\ne x\in M$ and $h\in H^x\cap H$. Then there exists an element $k \in H$ so that $h = k^x$, and so, $k^{-1}h = k^{-1}k^x = [k,x] \in H \cap M=1$. Hence $k \in C_H(x)\le H\cap N$, from which it follows that $H^x \cap H\le H\cap N$. Thus $(G,H,H\cap N)$ is a Frobenius--Wielandt triple, and therefore, the conclusion that $(G,N,M)$ is a Frobenius triple follows from Theorem~\ref{FT FW equiv}.  
\end{proof}

We now prove Theorem~\ref{split FT equiv conditions}.  Notice that condition (1) is the hypothesis that $(G,N,M)$ is a Frobenius triple.  On the other hand, condition (4) is the condition that $(G,H,H \cap N)$ is a Frobenius-Wielandt triple.  

\begin{thm}\label{equiv}
Let $M$ and $N$ be proper, nontrivial normal subgroups of $G$ that satisfy $M \le N$, and assume that $M$ has a complement $H$ in $G$. The following are equivalent:
\begin{enumerate}[label={\bf(\arabic*)}]
\item $C_G(x) \le N$ for every element $1\ne x\in M$;
\item $C_H(x) \le H \cap N$ for every element $1\ne x\in M$;
\item $C_G(h) \le H$ for every element $h\in H \setminus N$;
\item $H \cap N < H$ and $H^g \cap H \le H \cap N$ for every element $g\in G \setminus H$.
\item Every element $g \in G\setminus N$ is conjugate to an element of $H$; 
\item If $h \in H \setminus N$, then $h$ is conjugate to every element of $hM$.
\end{enumerate}
\end{thm}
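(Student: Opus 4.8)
The plan is to prove the six conditions equivalent by establishing a cycle of implications, supplemented by a few direct two-way arguments where that is cleaner. The setup is favorable: since $H$ is a genuine complement for $M$ (so $G=HM$ and $H\cap M=1$), every element of $G$ factors uniquely as $hm$ with $h\in H$, $m\in M$, and the commutator identity $[g,hm]=[g,m][g,h]^m$ (together with $H\cap M=1$) will be the workhorse throughout, exactly as in the proof of Theorem~\ref{cam psuedosplit}. I would first record the easy equivalence of (1) and (2): the forward direction is immediate since $C_H(x)=C_G(x)\cap H\le N\cap H$, and for the converse I would use the coprimality that (2) forces. In fact, assuming (2), a Sylow argument like the one at the end of Lemma~\ref{partconverse 1} shows $(\norm{G:N},\norm M)=1$; then since $H$ is a complement, $(HM,(H\cap N)M,M)=(G,(H\cap N)M,M)$ is a Frobenius triple iff $(G,H,H\cap N)$ is a Frobenius--Wielandt triple by Theorem~\ref{FT FW equiv}, and I can route (1)$\Leftrightarrow$(4) through that theorem directly.

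Next I would handle the core geometric conditions (3), (4), (5), (6). For (1)$\Rightarrow$(3): if $h\in H\setminus N$ and $g=km\in C_G(h)$ with $k\in H$, $m\in M$, then the factorization of $[h,km]=1$ forces $[h,m]\in H\cap M=1$, so $m\in C_M(h)$; but $h\in H\setminus N$ acts fixed-point-freely on $M$ under (1), giving $m=1$ and hence $g=k\in H$. For (3)$\Rightarrow$(4) I would note that (3) already gives $H\cap N<H$ (else every $h\in H$ lies in $N$, making $H\setminus N$ empty only if $H\le N$, which combined with $G=HM$ and properness of $N$ needs ruling out), and then deduce the conjugate-intersection bound: given $h\in H^g\cap H$ with $h\notin H\cap N$, write $h=k^g$ and show $g$ centralizes $h$ modulo the $H$-part, placing $g\in H$ via (3). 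The equivalence of (4) with the Frobenius--Wielandt condition is Theorem~\ref{FT FW equiv} itself, so (4) connects back to (1).

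For the orbit-counting conditions I would argue (6)$\Leftrightarrow$(1) and (5)$\Leftrightarrow$(6). Condition (6) says the conjugacy class of $h\in H\setminus N$ contains all of $hM$, i.e. $\{[h,g]:g\in G\}\supseteq M$; running the factorization argument of Theorem~\ref{cam psuedosplit} in reverse shows this is equivalent to $h$ acting fixed-point-freely on $M$, which across all $h\in H\setminus N$ is equivalent to (1) once one checks (using $G=HM$ and conjugation) that fixed-point-freeness on $M$ for the $H\setminus N$ representatives propagates to all of $G\setminus N$. For (5): every $g\in G\setminus N$ has the form $hm$, and conjugating to kill the $M$-part is precisely what (6) provides, so (6)$\Rightarrow$(5); conversely (5) lets me reduce any centralizer or class computation to representatives in $H$, recovering (6).

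The main obstacle I anticipate is the converse direction (2)$\Rightarrow$(1) (equivalently, bootstrapping any of the ``inside $H$'' conditions to the full group), because a priori centralizing behavior controlled only on $H$ says nothing about elements whose $M$-component is nontrivial. The resolution is that each of these local conditions secretly forces the coprimality $(\norm{G:N},\norm M)=1$ via a Sylow/center argument, after which Theorem~\ref{FT FW equiv} and Lemma~\ref{reduction} promote the statement to $G$; so the real work is extracting coprimality from the weakest-looking hypothesis in the list and then invoking the machinery already built. I would therefore organize the cycle so that the single coprimality deduction is made once and shared.
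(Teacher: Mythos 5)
Your overall architecture --- the commutator identity $[km,h]=[k,h]^m[m,h]$ exploited via $H\cap M=1$, Theorem~\ref{FT FW equiv} to identify condition (4) with condition (1), and fixed-point-freeness on $M$ to tie (6) back to (1) --- matches the paper's proof, and your sketches of (1)$\Rightarrow$(3), (3)$\Rightarrow$(4), and (6)$\Rightarrow$(1) are essentially the arguments the paper gives (the paper routes (6)$\Rightarrow$(1) through the Camina-triple condition and Theorem~\ref{cam psuedosplit}, while your direct propagation of fixed-point-freeness along conjugates of $H\setminus N$ also works). There are, however, two genuine gaps. The first is your route for (2)$\Rightarrow$(1), which does not close as described. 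Extracting $(\norm{G:N},\norm{M})=1$ from (2) by a Sylow argument is doable, but coprimality alone is nowhere near condition (1); and the proposed appeal to Lemma~\ref{reduction} is both inapplicable (it requires $(G,N,M)$ to already be a Camina triple, which you have not established at that point) and vacuous here (since $H$ complements $M$ we have $HM=G$, so there is nothing to reduce). What is missing is the single step that actually uses hypothesis (2): either prove (2)$\Rightarrow$(3) directly --- if $km\in C_G(h)$ with $h\in H\setminus N$, $k\in H$, $m\in M$, then $1=[km,h]=[k,h]^m[m,h]$ forces $[k,h]\in H\cap M=1$ and $[m,h]=1$, whence $m=1$ since otherwise $h\in C_H(m)\le H\cap N$ --- or prove (2)$\Rightarrow$(4) by the same computation applied to $h=k^x\in H^x\cap H$. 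This is how the paper proceeds, and no coprimality is needed anywhere in that chain.

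The second gap is (5)$\Rightarrow$(6), which you assert (``(5) lets me reduce any centralizer or class computation to representatives in $H$'') but never argue; as stated it is not a proof, and (5) does not obviously control centralizers. The needed argument is short but must be supplied: for $h\in H\setminus N$ and $m\in M$, condition (5) gives $g$ with $(hm)^g\in H$; writing $g=km'$ with $k\in H$ and $m'\in M$, one checks $(hm)^g\in h^gM=h^kM$, so $(h^k)^{-1}(hm)^g\in H\cap M=1$ and hence $(hm)^{gk^{-1}}=h$. Without this, your scheme only shows that (5) follows from the other conditions, not that it implies them.
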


\begin{proof}
The observation that condition (1) implies condition (2) is obvious.  Assume condition (2) now.  Consider an element $h\in H\setminus N$ and suppose that $g=km\in C_G(h)$ where $k\in H$ and $m\in M$. Then $1=[km,h]=[k,h]^m[m,h]$ and so $[k,h]\in H\cap M=1$. Thus $[m,h]=1$ and so $m=1$ and $g\in H$. Hence (3) holds.

Now assume (3).  Since $G = HM$, we have $G = NH$ and $|G:N| = |H:H \cap N|$.  Because $N$ is proper in $G$, we have that $H \cap N$ is proper in $H$.  Consider an element $g \in G \setminus H$.  There exists elements $y \in H$ and $1\ne x\in M$ so that $g = yx$.  Observe that $H^g = H^{yx} = H^x$.  Suppose $h \in H^g \cap H = H^x\cap H$. Then there exists $k\in H$ so that $h=k^x$ and so $k^{-1}h = [k,x] \in H \cap M = 1$. So $h = k \in C_H(x) \le H\cap N$ and (4) follows.

Assume that (4) holds. Thus, $(HM,H,H\cap N)$ is a Frobenius--Wielandt group with kernel $K = HM \setminus \left[ \bigcup_{x \in M} (H^x \setminus N) \right]$.  Since $M \le N$, we see that $M \le K$. Since $HM = HK$ and $H \cap K = H \cap N$, we deduce that $\norm{HM:K} = \norm{HK:K} = \norm{H:H\cap N} = \norm{HM:(H\cap N)M}$ and so $K = (H\cap N)M = HM \cap N = N$. By the description of $K = HM \cap N$ given above, it is clear that every element of $HM$ lying outside of $N$ is contained in one of the sets $(H \setminus N)^x$ for some element $x \in M$. Thus (5) is proved.

Given (5), let $h\in H \setminus N$ and $m \in M$. We wish to show that $h$ is conjugate to $hm$. By (5), some conjugate of $hm$ lies in $H$, say $(hm)^g = h_1 \in H$ for $g \in G$.  We can write $g = kl$ where $k \in H$ and $l \in M$.  Then $h_1 = (hm)^{kl} = (kl)^{-1}(hm)(kl) = l^{-1}k^{-1}hmkl$.  We know $l^{-1} k^{-1}h \in M k^{-1}h = k^{-1}h M$, so $l^{-1} k^{-1}h = k^{-1}h l^*$ for some $l^* \in M$.  Then $l^*mk \in Mk = kM$ and so, $l^*mk = k m^*$ for some $m^* \in M$.  We deduce that 
$h_1  = l^{-1}k^{-1}hmkl = k^{-1}hk m^*l$.  This implies that $h_1^{-1} k^{-1} hk = (m^*l)^{-1}$.  This implies that $h_1^{-1} k^{-1} hk \in H \cap M = 1$.  We obtain $h_1 = h^k$.  Since $h_1 = (hm)^g$, we conclude that $(hm)^{gk^{-1}}=h$, which proves (6). %Observe that $(hm)^g\in h^gM=h^kM$, where $k\in H \cap gM$, since $(h^k)^{-1}h^g=[(h^k)^{-1},k^{-1}h^g]$. %So $(h^k)^{-1}(hm)^g\in H\cap M=1$ and hence 

Finally assume (6). We will show that $(G,N,M)$ is a Camina triple. To do this we will show that every element $g \in G \setminus N$ is conjugate to every element in $gM$. Write $g=hm$, where $h \in H\setminus N$ and $m \in M$. Let $x \in M$. By (6), we can find $y \in G$ so that $g^y = (hm)^y = h$ and $z \in G$ so that $h = (hx)^z$. So $(hm)^{yz^{-1}} = hx$, as required. So $(G,N,M)$ is a Camina triple.  Since there exists a pseudo-complement $H$, we may use Theorem~\ref{cam psuedosplit} to see that $(G,N,M)$ is a Frobenius triple, and thus, condition (1) follows.
\end{proof}

Note that the assumption $G=HM$ is necessary in the statement of Theorem~\ref{split FT equiv conditions}. To see this, let $G=C_3\times S_3$. Let $M$ be a Sylow 3-subgroup of $S_3$, let $H$ be a Sylow $2$-subgroup of $S_3$ and let $N$ be a Sylow $3$-subgroup of $G$. Then $(G,N,M)$ is a Frobenius triple, $G=HN$ and $H\cap M=1$. However, conditions (3)--(6) of Theorem~\ref{split FT equiv conditions} do not hold.

%We also obtain the following consequence of Theorem~\ref{FT FW equiv}.
\section {Properties of Frobenius triples}

We now study Frobenius triples and we determine some of their properties.  We first show that if we have a Frobenius triple where the ``pseudo-complement'' actually complements the lower term in the triple, then in fact, we have a Frobenius group.

\begin{lem}\label{N splits}
Let $(G,N,M)$ be a Frobenius triple. If there exists a subgroup $H \le G$ that satisfies $H \cap N = 1$, then $MH$ is a Frobenius group with Frobenius kernel $M$. In particular, $M$ is nilpotent.
\end{lem}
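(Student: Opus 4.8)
The plan is to reduce the statement to the centralizer characterization of Frobenius groups recorded in Theorem~\ref{equivFrob}. Since $M \lhd G$, it is normal in the subgroup $MH$, so to conclude that $MH$ is a Frobenius group with Frobenius kernel $M$ it suffices, by the equivalence of (1) and (2) in Theorem~\ref{equivFrob}, to verify that $C_{MH}(x) \le M$ for every element $1 \ne x \in M$. The key point is that the Frobenius triple hypothesis already bounds centralizers in the ambient group $G$, and the hypothesis $H \cap N = 1$ will let me pull that bound down into $MH$.

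The computation then proceeds in two short steps. First I would use the Dedekind modular law to identify the relevant intersection: because $M \le N$, we have $MH \cap N = M(H \cap N) = M$, where the last equality uses $H \cap N = 1$. Second, fix $1 \ne x \in M$. Since $(G,N,M)$ is a Frobenius triple we have $C_G(x) \le N$, and intersecting with $MH$ gives
$$C_{MH}(x) = C_G(x) \cap MH \le N \cap MH = M,$$
which is precisely the required condition.

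With this in hand, Theorem~\ref{equivFrob} immediately yields that $MH$ is a Frobenius group with Frobenius kernel $M$; here one should note that $M$ is a proper, nontrivial normal subgroup of $MH$, the properness coming from $H \cap M \le H \cap N = 1$, so that $H \not\le M$. The nilpotence of $M$ is then immediate from the classical theorem of Thompson that every Frobenius kernel is nilpotent. I do not anticipate a genuine obstacle in this argument; the only steps requiring care are the correct application of the modular law and the check that $M$ remains a proper, nontrivial kernel inside $MH$, so that the phrase \enquote{Frobenius group with kernel $M$} is meaningful.
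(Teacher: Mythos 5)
Your proposal is correct and follows essentially the same route as the paper: both verify $C_{MH}(x) = C_G(x)\cap MH \le N\cap MH = M$ for $1\ne x\in M$ and then invoke the centralizer criterion of Theorem~\ref{equivFrob}. Your additional remarks (the modular law computation $MH\cap N = M(H\cap N)$ and the properness check) only make explicit details the paper leaves implicit.
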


\begin{proof}
Suppose $1 \ne m \in M$, then $C_{G} (m) \le N$.  It follows that $C_{MH} (m) = C_{G} (m) \cap MH \le N \cap MH = M$.  By Theorem \ref{equivFrob}, this implies $HM$ is a Frobenius group.  
%By Lemma~\ref{psuedocomp}, $(HM,(H\cap N)M,M)$ is a Frobenius triple. By Theorem~\ref{FT FW equiv}, this means $(HM,H,H\cap N)$ is a Frobenius--Wielandt group. But $H\cap N=1$, so $HM$ is a Frobenius group with Frobenius complement $H$ and Frobenius kernel $M$.
\end{proof}

In the situation of Lemma~\ref{N splits}, we can find an element of prime order in $G\setminus N$ and, of course, $M$ is nilpotent. Using a similar argument as in Lemma~4 of \cite{acamina}, we see that having an element of prime order in $G\setminus N$ is always enough to guarantee that $M$ is nilpotent.  Note that if $(G,N,M)$ is a Frobenius triple and $G \setminus N$ contains an element of prime order, then this next lemma is even easier to prove.

\begin{lem}\label{nilpotentCamina}
Let $(G,N,M)$ be a Camina triple. If there is an element $x\in G\setminus N$ of prime order, then $M$ is nilpotent. 
\end{lem}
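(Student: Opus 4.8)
The plan is to use the Camina-triple hypothesis only for the coset of $x$ and then reduce everything to a coprime fixed-point-free action, where Thompson's theorem applies. Write $p=o(x)$. Since $x\notin N$ and $p$ is prime we have $\langle x\rangle\cap N=1$, so $x^i\in G\setminus N$ for $1\le i\le p-1$; because $(G,N,M)$ is a Camina triple, the conjugacy class of each $x^i$ is a union of cosets of $M$ and contains $x^i$, hence it contains the whole coset $x^iM$. Thus every element of $x^iM$ is conjugate to $x^i$ and has order $p$. As $M\lhd G$ and $\langle x\rangle\cap M=1$, the subgroup $L:=M\langle x\rangle=M\rtimes\langle x\rangle$ satisfies $L\setminus M=\bigcup_{i=1}^{p-1}x^iM$, so I have shown that \emph{every element of $L\setminus M$ has order $p$}. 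I will also use that $M$ is solvable, by \cite[Lemma 2.7]{NM14}.

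First I would settle the coprime case $p\nmid|M|$. If $1\ne m\in C_M(x)$, then $x$ and $m$ commute and have coprime orders, so $o(xm)=p\cdot o(m)>p$, contradicting that $xm\in xM$ has order $p$. Hence $C_M(x)=1$ and $M$ is nilpotent by Thompson's theorem on fixed-point-free automorphisms of prime order. The same computation is useful in general: if $c\in C_M(x)$ were a nontrivial $p'$-element, then $xc\in L\setminus M$ would have order $p\cdot o(c)>p$, a contradiction. Therefore $C_M(x)$ is a $p$-group in every case; in particular $x$ acts fixed-point-freely on $K:=O_{p'}(M)$, so $K$ is nilpotent.

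For the general statement I would induct on $|G|$, the coprime case serving as the base. The key point is that the property ``every element of $L\setminus M$ has order $p$'' descends to quotients: if $1\ne R\lhd G$ with $R\le M$, then $(G/R,N/R,M/R)$ is a Camina triple in which $xR$ has order $p$ outside $N/R$, so $M/R$ is nilpotent by induction. Applying this with $R=K=O_{p'}(M)$ (when $K\ne1$) shows that $M/K$ is nilpotent with trivial $p'$-core, hence is a $p$-group; thus $M=K\rtimes P$ with $P\in\mathrm{Syl}_p(M)$, and it remains only to prove $[K,P]=1$. Here I would arrange by a Frattini argument that $x$ normalizes $P$, so that $S:=P\langle x\rangle\in\mathrm{Syl}_p(L)$ and $S\setminus P=S\setminus M$ consists of elements of order $p$, each of which (again by the coprime order computation) acts fixed-point-freely on $K$. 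Passing to a chief factor $V$ of $K$ and to the faithful quotient $\bar S=S/C_S(V)$, I would choose a nontrivial $\bar u\in Z(\bar S)\cap\bar P$, where $\bar P$ is the image of $P$, so that $\langle\bar x,\bar u\rangle\cong C_p\times C_p$; since $p$ of its $p+1$ subgroups of order $p$ (all those meeting $S\setminus P$) act without fixed points on $V$, the fixed-point-counting identity for coprime $C_p\times C_p$-modules forces $C_V(\bar u)=V$, contradicting faithfulness unless $\bar P=1$. Hence $P$ centralizes every chief factor of $K$, and so centralizes $K$, giving $M=K\times P$ and completing the induction.

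The main obstacle is the non-coprime regime, and within it the delicate point is precisely this passage from fixed-point-freeness of the single element $x$ to nilpotency of all of $M$: it is the full coset condition, rather than the weaker statement that $C_M(x)$ is a $p$-group, that must be exploited (indeed $A_4$ with an outer involution shows that ``$C_M(x)$ a $p$-group'' alone is far too weak). In particular the residual case $O_{p'}(M)=1$, where the reduction to $M=K\rtimes P$ via induction is unavailable, must be handled separately by showing that the coset condition forces $M$ to be a $p$-group; I expect this to follow from the same local fixed-point analysis applied to the Fitting subgroup $F(M)=O_p(M)$ together with the solvability of $M$, but it is the step that will require the most care.
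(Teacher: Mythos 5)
Your opening reduction is correct and is in substance the same as the paper's: from the Camina condition, the class of $x^{i}$ contains the whole coset $x^{i}M$, so every element of $L\setminus M$ (with $L=M\langle x\rangle$) has order $p$ --- equivalently, $gg^{x}g^{x^{2}}\dotsb g^{x^{p-1}}=1$ for all $g\in M$, which is the identity the paper derives. The difference is what happens next. The paper stops here and invokes the Hughes--Kegel--Thompson theorem \cite[V 8.13 Hauptsatz]{huppertendliche}, which says precisely that a group admitting such a ``splitting'' automorphism of prime order is nilpotent. You instead try to prove nilpotency directly, and your argument genuinely covers two regimes: the coprime case $p\nmid\norm{M}$ (where the splitting condition forces $C_M(x)=1$ and Thompson's theorem applies), and the case $O_{p'}(M)\ne 1$, where your induction plus the $C_p\times C_p$ fixed-point-counting argument on chief factors of $K=O_{p'}(M)$ is correct (granting the standard facts that coprime fixed-point-freeness passes to invariant sections and that a coprime action centralizing all factors of an invariant series is trivial).

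The gap is the case you yourself flag: $p\divides\norm{M}$ and $O_{p'}(M)=1$, where you must show $M$ is a $p$-group and only say you ``expect'' this to follow from a local fixed-point analysis. This is not a residual technicality --- it is exactly the hard core of the theorem the paper cites. The inductive quotienting is unavailable (every minimal normal subgroup of $G$ inside $M$ is a $p$-group, and knowing $M/R$ is nilpotent for such an $R$ does not return nilpotency of $M$), and the $C_p\times C_p$ counting argument has no coprime module to act on. The known treatments of this configuration (every element of $Mx$ of order $p$, i.e.\ the Hughes subgroup of $L$ proper) require substantially more machinery --- Hall--Higman type analysis of the minimal polynomial of $x$ on chief factors, as in Kegel's theorem on splitting automorphisms --- and your sketch of ``the same local fixed-point analysis applied to $F(M)=O_p(M)$'' does not supply it. As written, then, the proof is incomplete; the clean fix is to stop where you have shown that every element of $L\setminus M$ has order $p$ and cite \cite[V 8.13 Hauptsatz]{huppertendliche}, which is what the paper does.
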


\begin{proof}
%Suppose we have an element $x\in G\setminus N$ with $o(x)=p$.  Let $C = \langle x \rangle$.  Then $C \cap N = 1$.  Since for all $1 \ne m \in M$, we have $C_G (m) \le N$, it follows that $C_{MC} (m) \le M$.  This implies that $MC$ is a Frobenius group, and thus, $M$ is a Frobenius kernel, and hence, nilpotent.
Let $g\in M$ and let $x\in G\setminus N$ with $o(x)=p$. Since $(G,N,M)$ is a Camina triple, $x^{-1}$ is conjugate to $x^{-1}g$, which is conjugate to $gx^{-1}$. In 	particular, $(gx^{-1})^p=1$. Rewriting this, we have $gg^xg^{x^2}\dotsb g^{x^{p-1}}=x^p=1$. The result now follows from a result of Hughes, Kegel and Thompson (see \cite[V 8.13 Hauptsatz]{huppertendliche}).
\end{proof}
 
In general, $M$ is not nilpotent when $(G,N,M)$ is a Frobenius triple. In fact, one may find examples of Frobenius triples $(G,N,M)$ where the Fitting height $h(M)$ of $M$ is arbitrarily large.  This next theorem contains most of the work of proving these examples exist.
 
\begin{thm}\label{Fitting height}
For every positive integer $n$ and every prime $p$, there exists a group $G$ that satisfies the following conditions:
\begin{enumerate}[label={\rm(\roman*)}]
\item $G$ is a semidirect product a group $M$ by a group $H$.
\item $H$ is a cyclic group  of order $p^n$
\item $H$ acts faithfully on $M$.
\item $M$ is a solvable $p'$-group whose Fitting height is $n$.
\item The Fitting subgroup of $M$ is minimal normal in $G$.
\item For every nonidentity element $x \in M$, $p$ divides $\norm{G:C_G (x)}$.
\end{enumerate}
\end{thm}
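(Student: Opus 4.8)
The plan is to construct the groups by induction on $n$, proving the slightly stronger statement that, in addition to (i)--(iv), the Fitting subgroup $F(M)$ may be taken to be the \emph{unique} minimal normal subgroup of $G$. Throughout I write $G=A\ltimes M$ with $A\cong C_{p^n}$, and I use the following reformulation of (iv): since $\norm{G}=p^n\norm{M}$ with $\norm{M}$ a $p'$-number, a subgroup of $G$ contains a Sylow $p$-subgroup exactly when $p^n$ divides its order, so (iv) says precisely that $C_G(x)$ contains no Sylow $p$-subgroup of $G$, equivalently that \emph{no $G$-conjugate of $A$ centralizes $x$}. For the base case $n=1$ I would take $M=V_1$ to be a nontrivial irreducible $\mathbb{F}_q[C_p]$-module over the prime field $\mathbb{F}_q$ for any prime $q\ne p$; then $C_p$ acts faithfully and fixed-point-freely, $F(M)=M=V_1$ is the unique minimal normal subgroup, and (i)--(iv) are immediate.

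\textbf{The inductive step.} Suppose $G_n=A_n\ltimes M_n$ has been constructed, and let $A\cong C_{p^{n+1}}$ with unique subgroup $W$ of order $p$. The crucial device is to let $A$ act on $M_n$ through the quotient $A\twoheadrightarrow A/W\cong C_{p^n}\cong A_n$, so that $W$ is central in $\widetilde{G}:=A\ltimes M_n$ and $\widetilde{G}/W\cong G_n$. I would then choose a prime $q$ coprime to $\norm{\widetilde{G}}$ and a faithful irreducible $\mathbb{F}_q[\widetilde{G}]$-module $V$ over the prime field $\mathbb{F}_q$, and set $M:=V\rtimes M_n$ and $G:=A\ltimes M$, so that $G/V\cong\widetilde{G}$. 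Because $V$ is a faithful $\widetilde{G}$-module, $A$ acts faithfully on $V$ and hence on $M$, giving (i); moreover $C_{M_n}(V)=1$, which forces $F(M)=V$ and therefore $h(M)=1+h(M_n)=n+1$, giving (ii). Since $V$ is irreducible over the prime field it is minimal normal in $G$, and since $C_G(V)=V$ it is in fact the unique minimal normal subgroup, giving (iii) and the strengthened hypothesis.

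\textbf{Verifying (iv).} Here the key observation is that, by Schur's lemma, the central subgroup $W$ acts on the irreducible module $V$ by a scalar, and faithfulness forces that scalar to have order $p$, hence to be nontrivial; consequently $C_V(W)=0$. Now fix $1\ne x\in M$. If $x\in V$, then any conjugate $A^g$ contains the conjugate $W^g$, whose image in $\widetilde{G}=G/V$ is again $W$ (as $W$ is central), and this image acts on $V$ as the nontrivial scalar and so cannot fix $x$; thus no conjugate of $A$ centralizes $x$. If instead $x\notin V$, its image $\bar{x}$ in $M/V\cong M_n$ is nontrivial, and a conjugate of $A$ centralizing $x$ would, after passing to $G/V\cong\widetilde{G}$ and then to $\widetilde{G}/W\cong G_n$, produce a conjugate of $A_n$ centralizing $\bar{x}$, contradicting (iv) for $G_n$. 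In both cases (iv) holds, completing the induction.

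\textbf{The main obstacle.} The technical heart is the existence of the faithful irreducible $\mathbb{F}_q[\widetilde{G}]$-module $V$, and this is where the strengthened hypothesis earns its keep. Over $\mathbb{C}$, a finite group has a faithful irreducible representation if and only if its socle is generated by a single conjugacy class, and I would verify this for $\widetilde{G}$ as follows: any minimal normal subgroup meeting $M_n$ trivially centralizes $M_n$ and hence lies in the central $W$, so the only minimal normal subgroups of $\widetilde{G}$ are $W\cong C_p$ and the unique minimal normal subgroup $F(M_n)$ of $G_n$, giving $\mathrm{Soc}(\widetilde{G})=W\times F(M_n)$; taking $s=wf$ with $1\ne w\in W$ and $1\ne f\in F(M_n)$, and using that $A_n$ acts nontrivially on $F(M_n)$ (so $f$ is non-central), one checks that the $\widetilde{G}$-conjugates of $s$ recover all of $W\times F(M_n)$. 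A faithful irreducible complex representation then reduces, since $q\nmid\norm{\widetilde{G}}$, to a faithful irreducible representation over $\overline{\mathbb{F}_q}$, and the module afforded by the corresponding Galois orbit is faithful and irreducible over the prime field $\mathbb{F}_q$; crucially, the central character of $W$ and faithfulness both survive this passage, so the scalar action of $W$ used above is retained. Carrying out this socle computation and the descent to the prime field carefully, while keeping all the primes $q$ chosen at successive stages distinct from $p$ and from one another, is the part I expect to require the most care.
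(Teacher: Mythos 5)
Your construction is correct and is essentially the construction in the paper: the same induction on $n$, the same device of letting the cyclic group of order $p^{n+1}$ act on $M_n$ through its quotient of order $p^n$ so that the subgroup $W$ of order $p$ becomes central, the same adjunction of a faithful irreducible module $V$ in a new coprime characteristic $q$ to form $\tilde M = V\rtimes M_n$, and the same two-case verification of (iv) (elements inside $V$ handled via $C_V(W)=0$, elements outside $V$ handled by passing to $G/VW\cong G_n$ and invoking the inductive hypothesis). The one place you genuinely diverge is in producing the faithful irreducible module: the paper takes an irreducible constituent $\chi$ of the character induced from a nonprincipal $\mu\times\lambda\in\irr(M\times Z)$ and regards its reduction modulo $q$ as the module, whereas you invoke the Gasch\"utz socle criterion, which obliges you to carry the extra inductive hypothesis that $F(M)$ is the \emph{unique} minimal normal subgroup of $G$ and to compute $\mathrm{Soc}(\widetilde G)=W\times F(M_n)$. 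Both routes work; yours is more explicit about why faithfulness can be achieved (the paper's assertion that $\chi$ ``will have to be faithful'' tacitly relies on the same uniqueness of the minimal normal subgroup of $G$ inside $M$, since otherwise a nonprincipal $\mu$ could have a nontrivial normal subgroup in the kernel of its whole orbit), at the cost of the socle computation and the descent from $\overline{\mathbb{F}_q}$ to the prime field, which the paper sidesteps by phrasing everything in terms of Brauer characters.
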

 
\begin{proof}
We work by induction on $n$.  When $n = 1$, we take $G$ be a Frobenius group whose Frobenius complement has order $p$ and whose Frobenius kernel $M$ is minimal normal in $G$.  Note that $G$ will satisfy the stated conditions.

We now assume $n \ge 1$, and we assume that we have an example $G$ that satisfies the given conditions, and we prove that there exists a group that satisfies the conditions for $n+1$.  Thus, $G$ is the semi-direct product of a cyclic group $H$ of order $p^n$ acting faithfully on the solvable $p'$-group $M$ whose Fitting height is $n$ such that the Fitting subgroup of $M$ is minimal normal in $G$ and every non identity element $x \in M$ satisfies $\norm {H:C_H (x)}$ is divisible by $p$.  We can write $H = \langle h \rangle$.  Take $\tilde H$ to be a cyclic group of order $p^{n+1}$ and write $\tilde H = \langle \tilde h \rangle$.  We can define an action of $\tilde H$ on $M$ by $m^{\tilde h} = m^h$ for all $m \in M$, and note that $Z = \langle \tilde h^{p^n} \rangle$ will be the kernel of this action.  We now define $\Gamma$ to be the semi-direct product of $\tilde H$ acting on $M$.  Observe that $Z$ will be the center of $\Gamma$ and $Z \times M$ will be the Fitting subgroup of $\Gamma$.  

Take $\mu \in \irr (M)$ and $\lambda \in \irr (Z)$ to be nonprincipal characters, and take $\chi$ to be an irreducible constituent of $(\mu \times \lambda)^G$.  It is not difficult to see that $\chi$ will have to be faithful.  Let $q$ be a prime that does not divide $|G|$.  Thus, we may view $\chi$ as an irreducible $q$-Brauer character.  Now, choose $V$ to be an irreducible $Z_q$-module that affords $\chi$ as a $q$-Brauer character.  We now have an action of $\Gamma$ on $V$.  It is not difficult to see that $\Gamma$ will act faithfully on $V$.  Observe that $C_V (Z)$ will be a proper $G$-submodule of $V$, and so, $C_V (Z) = 0$.  

We now define $\tilde G$ to be the semi-direct product of $\Gamma$ acting on $V$, and we define $\tilde M$ to be $MV$, so $\tilde M$ is the semi-direct product of $M$ acting on $V$.  It follows that $\tilde M$ is a solvable $p'$-subgroup.  Note that $\tilde G$ will be the semi-direct product of $\tilde H$ acting faithfully on $\tilde M$.  Observe that $V$ will be the Fitting subgroup of $\tilde M$, and since $M$ has Fitting height $n$, this implies that $\tilde M$ has Fitting height $n+1$.  Since $V$ is the Fitting subgroup of $\tilde M$, it follows that the Fitting subgroup of $\tilde M$ is minimal normal in $G$.  Finally, suppose $x$ is a nonidentity element of $\tilde M$.  Suppose first that $x \not\in V$.  Then $C_{\tilde G} (x) ZV/ZV \le C_{\tilde G/VZ} (xVZ)$.  Observe that $\tilde G/VZ$ is isomorphic to $G$ and $\tilde H/Z$ is isomorphic to $H$, and so, we have $p$ divides $|\tilde G/VZ:C_{\tilde G/VZ} (xVZ)|$.  In this case, we will have $p$ divides $\norm{\tilde G:C_{\tilde G} (x)}$.  Suppose $x \in V$.  Since $C_V (Z) = 0$, we see that $Z$ is not contained in $C_{\tilde G} (x)$.  On the other hand, we know that the Sylow $p$-subgroups of $\tilde G$ are cyclic; so this implies that a Sylow subgroup of $C_{\tilde G} (x)$ will be trivial.  This implies that $p^n$ divides $|\tilde G:C_{\tilde G} (x)|$.  We have now proven that $\tilde G$ with $\tilde H$ and $\tilde M$ satisfy the conclusions of the theorem for $n+1$.
\end{proof} 
 
We now show the existence of the stated examples.

\begin{cor}
Let $n$ be a positive integer. There is a Frobenius triple $(G,N,M)$ where $h(M)=n$.
\end{cor}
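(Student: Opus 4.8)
The plan is to apply Theorem~\ref{Fitting height} and then merely identify the correct middle term $N$. Fix any prime $p$ and let $G$ be the group produced by Theorem~\ref{Fitting height} for this $p$ and the given $n$; so $G=HM$ where $H=\langle h\rangle$ is cyclic of order $p^n$ acting faithfully on the solvable $p'$-group $M$, the Fitting height of $M$ is $n$, and every $1\ne x\in M$ satisfies $p\mid\norm{G:C_G(x)}$. In particular $M\trianglelefteq G$ with $G=HM$ and $H\cap M=1$. Since $M$ is nontrivial and $H\ne1$, $M$ is a proper nontrivial normal subgroup of $G$, and the only thing left is to produce an $N$ with $M\le N<G$ and $C_G(x)\le N$ for every $1\ne x\in M$; then $(G,N,M)$ is a Frobenius triple with $h(M)=n$.

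For the choice of $N$ I would take the preimage in $G$ of the unique subgroup of index $p$ in the cyclic group $G/M\cong H$; concretely $N=M\langle h^p\rangle$. Because $G/M$ is cyclic, hence abelian, every subgroup of $G$ containing $M$ is normal in $G$, so $N\trianglelefteq G$, and $N$ has index $p$ in $G$ and is therefore proper. (When $n=1$ this degenerates to $N=M$, recovering a genuine Frobenius group, consistent with the base case of Theorem~\ref{Fitting height}.)

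The one real computation is checking $C_G(x)\le N$ for each $1\ne x\in M$. Set $C=C_G(x)$. Since $M$ is a $p'$-group, the index $\norm{CM:C}=\norm{M:C_M(x)}$ is coprime to $p$, whereas $\norm{G:CM}$ divides $\norm{G:M}=p^n$ and is thus a power of $p$. From $\norm{G:C}=\norm{G:CM}\,\norm{CM:C}$ together with the hypothesis $p\mid\norm{G:C}$, I conclude $p\mid\norm{G:CM}$, so $CM$ is a proper subgroup of $G$ containing $M$. As $G/M$ is cyclic of order $p^n$, every proper subgroup containing $M$ lies inside the unique maximal one, namely $N/M$; hence $CM\le N$ and so $C_G(x)\le N$. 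This yields the Frobenius triple, and $h(M)=n$ is immediate from Theorem~\ref{Fitting height}.

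I do not expect a genuine obstacle here: all of the difficulty is already absorbed into Theorem~\ref{Fitting height}, which supplies a group whose $p'$-part has arbitrarily large Fitting height together with the fixed-point data of its condition (iv). The only point requiring care is the index bookkeeping showing that each centralizer $C_G(x)$ has $p$-part too small to escape the index-$p$ normal subgroup $N$; once the coprimality of $\norm{M:C_M(x)}$ to $p$ is used to transfer the divisibility from $\norm{G:C}$ to $\norm{G:CM}$, the cyclicity of $G/M$ finishes the argument.
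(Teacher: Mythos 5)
Your proposal is correct and follows essentially the same route as the paper: both take the group $G=HM$ from Theorem~\ref{Fitting height} and set $N=DM$ with $D=\langle h^p\rangle$ the index-$p$ subgroup of $H$. The only cosmetic difference is that the paper verifies $C_H(x)\le D$ and invokes Theorem~\ref{split FT equiv conditions}, whereas you verify $C_G(x)\le N$ directly by the index computation; both are valid.
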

 
\begin{proof}
Take $G$, $H$ and $M$ to be as in Theorem~\ref{Fitting height}. Let $D$ be the subgroup of $H$ of index $p$, and let $N=DM$. Then $C_H(x)\le D=H\cap N$ for every $1\ne x\in M$ and so $(G,N,M)$ is a Frobenius triple by Theorem~\ref{split FT equiv conditions}.
\end{proof}
 
Although $M$ can have arbitrarily large Fitting height when $(G,N,M)$ is a Frobenius triple, we can bound the Fitting height of $M$ in terms of the order of elements in $G\setminus N$ by appealing to results of F. Gross and E. I. Khukhro regarding fixed-point-free automorphisms. 

\begin{thm}
Let $M$ be a nontrivial, proper normal subgroup of $G$ and suppose that $C = \inner{C_G(x):1\ne x\in M}$ is a proper subgroup of $G$. For each prime $p\in\pi(G/C)$, write $p^{n_p}$ for the minimum order of a $p$-element in $G\setminus C$. 
Then $h (M) \le \min\{n_p:p\in\pi(G/C) \}$.
%Let $m_o=,\ \text{$p$ odd}\}$, and let $m_e=\max\{n_2,2n_2-2\}$. The Fitting height of $M$ satisfies
%\[h(M)\le
%	\begin{cases}
%		m_o&\text{if $\norm{G/C}$ is odd;}\\
%		m_e&\text{if $\norm{G/C}$ is a $2$-group;}\\
%		\min\{m_o,m_e\}&\text{otherwise.}
%	\end{cases}\]	
\end{thm}

\begin{proof}
Since every $g\in G\setminus C$ acts fixed-point-freely on $M$, this follows from Theorem 1 of \cite{fpfautos_odd} when $p$ is odd and the Theorem in \cite{khukhro} which improved the earlier result of Theorem 1.1 of \cite{fpfautos_even} when $p = 2$.
\end{proof}

\section{Character theory of Frobenius triples}

%In \cite{acamina}, Camina shows that $(G,N)$ is a Camina pair if and only if $(G,N)$ satisfies a character-theoretic condition motivated by Frobenius groups. 
It is well-known that $G$ is a Frobenius group with Frobenius kernel $N$ if and only if the induced character $\psi^G$ is irreducible for every nonprincipal character $\psi \in \irr(N)$.  %The condition given in \cite{acamina} is equivalent to requiring $\psi^G$ to be homogeneous for every nonprincipal character $\psi\in\irr(N)$ (but is slightly more complicated). In particular, 
A Camina triple can also be characterized by a condition on its irreducible characters. Since these conditions will be used repeatedly throughout this section, we list them here. For a normal subgroup $M$ of $G$, we use $\irr(G\mid M)$ to denote the set of all irreducible characters of $G$ that do not contain $M$ in their kernel.

\begin{lem}[{\normalfont cf. \cite[Theorem 4]{NM14}}]\label{equivCam2}
Let $M$ and $N$ be proper, nontrivial normal subgroups of $G$ that satisfy $M \le N$. The following are equivalent:
\begin{enumerate}[label={\bf(\arabic*)}]
\item $(G,N,M)$ is a Camina triple.
\item Every character $\psi \in \irr (N \mid M)$ induces homogeneously to $G$.
\item Every character $\psi \in \irr (G \mid M)$ vanishes on $G \setminus N$.
\end{enumerate}
\end{lem}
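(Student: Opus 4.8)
\emph{Plan.} The plan is to route everything through condition (3), proving the two equivalences (1)$\Leftrightarrow$(3) and (2)$\Leftrightarrow$(3). Two elementary facts carry most of the argument. First, for any $\chi\in\irr(G\mid M)$ the restriction $\chi_M$ contains no principal constituent (by Clifford's theorem $\chi_M$ is a multiple of a sum of $G$-conjugates of a single nonprincipal $\theta\in\irr(M)$), so for a representation $\rho$ affording $\chi$ the averaging operator $\frac{1}{\norm{M}}\sum_{m\in M}\rho(m)$, which projects onto the $M$-fixed subspace, is the zero operator (the fixed subspace being trivial); taking traces yields the key identity $\sum_{m\in M}\chi(gm)=0$ for every $g\in G$. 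Second, since $N\lhd G$, every induced character $\psi^G$ with $\psi\in\irr(N)$ automatically vanishes on $G\setminus N$, and every irreducible constituent of such a $\psi^G$ with $\psi\in\irr(N\mid M)$ must lie in $\irr(G\mid M)$ (otherwise $M\le\ker\chi$ would force $M\le\ker\psi$, via $\langle\psi^G,\chi\rangle=\langle\psi,\chi_N\rangle$).

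For (1)$\Leftrightarrow$(3) I would first record the standard translation: the class of $g$ is a union of $M$-cosets precisely when $gm$ is conjugate to $g$ for all $m\in M$, equivalently when $\chi(gm)=\chi(g)$ for all $\chi\in\irr(G)$ and all $m\in M$ (irreducible characters separate classes), and the characters with $M\le\ker\chi$ satisfy this automatically, so only $\chi\in\irr(G\mid M)$ matter. Assuming (1), for $g\in G\setminus N$ the coset $gM$ lies in the class of $g$, so $\chi(gm)=\chi(g)$ for all $m$; combined with the key identity, $0=\sum_{m}\chi(gm)=\norm{M}\,\chi(g)$, giving (3). Conversely, assuming (3), for $g\in G\setminus N$ and $\chi\in\irr(G\mid M)$ both $g$ and $gm$ lie outside $N$ (as $m\in M\le N$), so $\chi(g)=0=\chi(gm)$; together with the automatic equality when $M\le\ker\chi$, this forces $gm$ to be conjugate to $g$, i.e.\ (1).

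The implication (2)$\Rightarrow$(3) is then immediate: given $\chi\in\irr(G\mid M)$, choose a constituent $\psi\in\irr(N\mid M)$ of $\chi_N$; by Frobenius reciprocity $\chi$ is a constituent of $\psi^G$, which is homogeneous by (2), so $\psi^G=e\chi$, and since $\psi^G$ vanishes off $N$ so does $\chi$.

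The main obstacle is (3)$\Rightarrow$(2), where the real content lies. Here I would fix $\psi\in\irr(N\mid M)$, set $I=I_G(\psi)$ and $t=[G:I]$, and exploit the vanishing in two complementary ways. On the one hand, every constituent $\chi$ of $\psi^G$ lies in $\irr(G\mid M)$, hence vanishes off $N$ by (3); a direct computation with orthogonality then gives $\langle\chi_N,\chi_N\rangle=[G:N]$, and comparing with Clifford's decomposition $\chi_N=e\sum_{j=1}^{t}\psi_j$ forces the ramification to satisfy $e^2=[I:N]$ for \emph{every} such constituent. On the other hand, the Clifford correspondence identifies the constituents of $\psi^G$ with $\irr(I\mid\psi)$ and yields $\sum_i e_i^2=[I:N]$ summed over all constituents. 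Since each summand already equals the total, there is exactly one constituent, so $\psi^G$ is homogeneous. The delicate point is the bookkeeping of ramification indices under the Clifford correspondence---checking that the $e_i$ read off from the restriction to $N$ are the same indices entering $\sum_i e_i^2=[I:N]$; once that is pinned down, the remainder is a routine assembly of Clifford theory and the two preliminary facts.
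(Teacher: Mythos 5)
Your proof is correct. Note, however, that the paper does not prove this lemma at all: it is quoted as a known result, cited to Theorem~4 of Mlaiki's paper \cite{NM14}, so there is no in-paper argument to compare against. What you have written is a complete, self-contained reconstruction of the standard proof, and each step checks out: the averaging identity $\sum_{m\in M}\chi(gm)=0$ for $\chi\in\irr(G\mid M)$ is valid because $\langle\chi_M,1_M\rangle=0$ by Clifford's theorem; the equivalences with (3) follow as you say (for (2)$\Rightarrow$(3) you should remark explicitly that \emph{every} constituent of $\chi_N$ lies in $\irr(N\mid M)$, since they are all $G$-conjugate and $M\lhd G$, so the needed $\psi$ exists --- but this is the same observation as your second preliminary fact run in reverse); and the "delicate point" in (3)$\Rightarrow$(2), that the multiplicity $e_i=\langle(\eta_i^G)_N,\psi\rangle$ equals $\langle(\eta_i)_N,\psi\rangle$ for $\eta_i\in\irr(I\mid\psi)$, is exactly the content of the Clifford correspondence (\cite[Theorem 6.11]{MI76}), so the bookkeeping does close up: $e_i^2=\lvert I:N\rvert$ for each $i$ while $\sum_i e_i^2=\lvert I:N\rvert$, forcing a single constituent. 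One economy worth noting: the paper's own Lemma~\ref{induce} already proves that a $\chi\in\irr(G)$ vanishing on $G\setminus N$ satisfies $\psi^G=\norm{T:N}^{1/2}\chi$ for any constituent $\psi$ of $\chi_N$; your entire (3)$\Rightarrow$(2) computation is that lemma unpacked, so you could replace it by a two-line appeal to Lemma~\ref{induce} after observing that every constituent of $\psi^G$ lies in $\irr(G\mid M)$.
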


The following result is a known consequence of Clifford's Theorem.  It (essentially) appears as Lemma 2.2 of \cite{gagola_chars}. We include a short proof here.

\begin{lem} \label{induce}
Let $N$ be a normal subgroup of $G$, and suppose that the character $\chi \in \irr (G)$ vanishes on $G \setminus N$. Let $\psi$ be an irreducible constituent of $\chi_N$. Then $\psi^G = \norm {T:N}^{1/2}\chi$, where $T=I_G(\psi)$ is the stabilizer of $\psi$ in $G$. In particular, $\psi$ is fully-ramified with respect to $T/N$.
\end{lem}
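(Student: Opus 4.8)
The plan is to extract the ramification index $e=[\chi_N,\psi]_N$ directly from the hypothesis that $\chi$ vanishes off $N$, and then to pin down $\psi^G$ by combining Frobenius reciprocity with a degree count. First I would compute the norm of the restriction $\chi_N$. Since $\chi$ is irreducible and vanishes on $G\setminus N$,
\[
1=[\chi,\chi]_G=\frac{1}{\norm{G}}\sum_{g\in N}\norm{\chi(g)}^2=\frac{1}{\norm{G:N}}[\chi_N,\chi_N]_N,
\]
so that $[\chi_N,\chi_N]_N=\norm{G:N}$. By Clifford's theorem we may write $\chi_N=e\sum_{i=1}^{t}\psi_i$, where the $\psi_i$ run over the $t=\norm{G:T}$ distinct $G$-conjugates of $\psi$ and $e=[\chi_N,\psi]_N$; hence $e^2t=\norm{G:N}$, which rearranges to $e^2=\norm{G:N}/\norm{G:T}=\norm{T:N}$, i.e. $e=\norm{T:N}^{1/2}$.

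Next I would locate $\chi$ inside $\psi^G$. By Frobenius reciprocity $[\psi^G,\chi]_G=[\psi,\chi_N]_N=e$, so $e\chi$ is a subcharacter of $\psi^G$. A degree comparison then completes the identification: on one side $\psi^G(1)=\norm{G:N}\,\psi(1)$, while on the other $\chi(1)=et\,\psi(1)=e\norm{G:T}\,\psi(1)$ gives
\[
e\,\chi(1)=e^2\norm{G:T}\,\psi(1)=\norm{T:N}\,\norm{G:T}\,\psi(1)=\norm{G:N}\,\psi(1)=\psi^G(1).
\]
Since $e\chi$ is contained in $\psi^G$ and the two characters have equal degree, we conclude $\psi^G=e\chi=\norm{T:N}^{1/2}\chi$.

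For the final clause I would pass through the Clifford correspondence. Because $\psi$ is $T$-invariant and $\psi^G=e\chi$ is a scalar multiple of the single irreducible $\chi$, the induction bijection $\theta\mapsto\theta^G$ from $\irr(T\mid\psi)$ to $\irr(G\mid\psi)$ forces $\psi^T$ to lie over a single $\theta\in\irr(T)$, namely the Clifford correspondent of $\chi$. Writing $\psi^T=f\theta$ with $f=[\theta_N,\psi]_N$, the $T$-invariance of $\psi$ makes $\theta_N=f\psi$ homogeneous, and the degree equation $\norm{T:N}\,\psi(1)=\psi^T(1)=f^2\psi(1)$ yields $f=\norm{T:N}^{1/2}=e$. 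Thus $\theta_N=e\psi$ and $\psi^T=e\theta$ with $e^2=\norm{T:N}$, which is exactly the assertion that $\psi$ is fully ramified with respect to $T/N$.

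The computations are all routine applications of Clifford theory; the only step requiring genuine care is the last one, where I must invoke the Clifford correspondence to argue that the homogeneity of $\psi^G$ descends to homogeneity of $\psi^T$. That descent is what legitimizes the concluding degree count and hence the fully-ramified conclusion, so I expect it to be the main point to state cleanly rather than a serious obstacle.
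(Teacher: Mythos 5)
Your proof is correct, and it reaches the conclusion by a slightly different (though equally elementary) route than the paper's. The paper evaluates $\inner{\psi^G,\xi}$ directly for an arbitrary $\xi\in\irr(G)$: expanding the induced character formula and using that both $\psi^G$ and $\chi$ vanish off $N$, it converts the sum over $N$ into $\frac{\norm{T:N}}{e}\inner{\chi,\xi}$, so that $\psi^G=\frac{\norm{T:N}}{e}\chi$, and comparing with $\inner{\psi^G,\chi}=e$ yields $e^2=\norm{T:N}$. You instead extract $e^2=\norm{T:N}$ from the norm computation $\inner{\chi_N,\chi_N}=\norm{G:N}$ combined with the Clifford decomposition $\chi_N=e\sum_i\psi_i$, and then identify $\psi^G$ with $e\chi$ by Frobenius reciprocity plus a degree count. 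Both arguments exploit the vanishing hypothesis in the same essential way, namely to pass between averages over $N$ and over $G$; yours avoids writing out the induced-character formula, while the paper's single computation shows in one stroke that $\psi^G$ is a multiple of $\chi$ without needing the separate containment-plus-degree argument. Your final paragraph, deducing the fully-ramified conclusion via the Clifford correspondence (homogeneity of $\psi^G$ forces $\irr(T\mid\psi)$ to be a singleton, whence $\psi^T=e\theta$ and $\theta_N=e\psi$ with $e^2=\norm{T:N}$), is also correct; the paper leaves that step implicit, so making it explicit as you do is a reasonable addition rather than a deviation.
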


\begin{proof}
Write $e = \inner {\psi^G,\chi}$.  Consider the character $\xi \in \irr(G)$. Then
\begin{align*}
\inner{\psi^G,\xi}
			&=\frac{1}{\norm{G}}\sum_{g\in G}\psi^G(g)\overline{\xi(g)}=\frac{1}{\norm{G}}\sum_{g\in N}\psi^G(g)
				\overline{\xi(g)}\\
			&=\frac{1}{\norm{G}}\sum_{g\in N}\frac{1}{\norm{N}}\sum_{x\in G}\psi^x(g)\overline{\xi(g)}=
				\frac{1}{\norm{G}}\sum_{g\in N}\frac{\norm{T}}{e\norm{N}}\chi(g)\overline{\xi(g)}\\
			&=\frac{\norm{T:N}}{e}\cdot\frac{1}{\norm{G}}\sum_{g\in G}\chi(g)\overline{\xi(g)}=
				\frac{\norm{T:N}}{e}\,\inner{\chi,\xi}=\frac{\norm{T:N}}{e}\,\delta_{\chi,\xi}.
	\end{align*}	
It follows that $\psi^G=e\chi=\frac{\norm{T:N}}{e}\chi$ and so also that $\psi^G=\norm{T:N}^{1/2}\chi$.
\end{proof}

We now show that the irreducible characters of Frobenius triples satisfy a condition that is similar to the one satisfied by Frobenius groups.

\begin{thm}\label{Frobish}
If $(G,N,M)$ is a Frobenius triple, then $\psi^G$ is irreducible for every character $\psi\in\irr(N\mid M)$.
\end{thm}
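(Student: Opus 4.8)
The plan is to reduce the entire statement to the single assertion that the inertia group $I_G(\psi)$ equals $N$. Once that is known, the Clifford correspondence gives $\psi^G\in\irr(G)$ immediately; alternatively, since a Frobenius triple is a Camina triple by Lemma~\ref{camfrob}, Lemma~\ref{equivCam2} already makes $\psi^G$ homogeneous, and Lemma~\ref{induce} writes $\psi^G=\norm{I_G(\psi):N}^{1/2}\chi$, so $I_G(\psi)=N$ forces the multiplicity to be $1$. Throughout I would use that $(\norm{G:N},\norm{M})=1$, which holds by Lemma~\ref{partconverse 1}, and that every irreducible constituent of $\psi_M$ is nonprincipal because $M\not\le\ker\psi$.

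First I would transfer the problem from $\psi$ down to $M$. Fix a (necessarily nonprincipal) irreducible constituent $\mu$ of $\psi_M$; by Clifford's theorem the constituents of $\psi_M$ are exactly the $N$-conjugates $\{\mu^n:n\in N\}$. If some $g\in I_G(\psi)\setminus N$ existed, then $g$ would permute the constituents of $\psi_M$, so $\mu^g=\mu^n$ for some $n\in N$, whence $gn^{-1}\in I_G(\mu)\setminus N$. Thus it suffices to prove that $I_G(\mu)\le N$ for every nonprincipal $\mu\in\irr(M)$.

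Next I would make the relevant action coprime. Suppose toward a contradiction that $g\in I_G(\mu)\setminus N$, and let $y$ be the $\pi(M)'$-part of $g$. Since $(\norm{G:N},\norm{M})=1$, the $\pi(M)$-part of $g$ lies in $N$ (exactly as in the proof of Lemma~\ref{partconverse 1}), so $y\notin N$, while $y$ is a power of $g$ and therefore still fixes $\mu$. Now $\langle y\rangle$ acts coprimely on $M$, and the Frobenius triple hypothesis yields $C_M(y)=1$: any nontrivial $m\in M$ with $[m,y]=1$ would give $y\in C_G(m)\le N$, a contradiction. So I am reduced to the purely character-theoretic assertion that a fixed-point-free coprime automorphism fixes no nonprincipal irreducible character of $M$.

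I expect this last assertion to be the main obstacle, and I would settle it with two standard tools. By Glauberman's lemma, every $\langle y\rangle$-invariant conjugacy class of $M$ meets $C_M(y)=1$, so the trivial class is the only $\langle y\rangle$-invariant class. By Brauer's permutation lemma, $y$ fixes exactly as many irreducible characters of $M$ as it fixes conjugacy classes, namely one, and that character must be $1_M$. This contradicts $\mu^y=\mu$ with $\mu\neq 1_M$, which completes the reduction and hence the proof. This step is precisely the character-theoretic shadow of the classical fact, quoted at the start of this section, that a Frobenius complement fixes no nonprincipal character of the Frobenius kernel.
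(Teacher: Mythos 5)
Your argument is correct, but it follows a route different from both of the paper's proofs. The paper's main proof runs by induction on $\norm{G}$ to reduce to the case where $G/N$ has prime order, and then uses the Camina-triple property (Lemma~\ref{equivCam2}) together with Lemma~\ref{induce} to write $\psi^G=\norm{I_G(\psi):N}^{1/2}\chi$, so that $\norm{I_G(\psi):N}$ must be a perfect square and hence trivial; its alternate proof instead proves a \emph{relative} Brauer permutation lemma (Lemma~\ref{newBrauer}) comparing $N$-orbits on $\irr(M)$ with $N$-classes in $M$, which lets an arbitrary $g\in G\setminus N$ be handled directly with no coprimality. Your proof is closest in spirit to the alternate proof and to Lemma~\ref{inertial} (whose statement is exactly your intermediate claim that $I_G(\mu)\le N$ for every nonprincipal $\mu\in\irr(M)$), but where the paper invokes Lemma~\ref{newBrauer} you instead buy coprimality: you use $(\norm{G:N},\norm{M})=1$ (available via Lemmas~\ref{camfrob} and~\ref{partconverse 1}) to replace $g$ by its $\pi(M)'$-part $y\notin N$, note $C_M(y)=1$, and then apply Glauberman's lemma (legitimate here since $\langle y\rangle$ is cyclic, hence solvable) plus the classical Brauer permutation lemma on $M$ alone. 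What this trades off: the paper's alternate proof needs the extra machinery of Lemma~\ref{newBrauer} but no arithmetic input, while yours uses only off-the-shelf results (Clifford, Glauberman, Brauer) at the cost of first establishing the coprimality of $\norm{G:N}$ and $\norm{M}$ and performing the $\pi$-part reduction. All the individual steps check out, including the passage from $I_G(\psi)$ to $I_G(\mu)$ via Clifford's theorem and the observation that every constituent of $\psi_M$ is nonprincipal.
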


\begin{proof}
We work by induction on $\norm{G}$.  Consider a character $\psi \in \irr (N \mid M)$. It is not difficult to see from Lemma~\ref{induce} and \cite[Theorem 6.11]{MI76} that $\psi^G \in \irr (G)$ if and only if $N$ is the stabilizer of $\psi$. Using the inductive hypothesis, we have $I_G(\psi) \cap K = I_K (\psi) = N$ for any nontrivial proper subgroup $K/N$ of $G/N$ and so it follows that $I_G(\psi)=N$ if $G/N$ has composite order. Thus it suffices to assume that $G/N$ has prime order. Let $\chi\in\irr(G\mid \psi)$. Since $\chi$ vanishes on $G\setminus N$ by Lemma~\ref{equivCam2}, we have $\psi^G = \norm {I_G (\psi) : N}^{1/2} \chi$ by Lemma~\ref{induce}. Since $G/N$ has prime order, we must have $I_G (\psi) = N$ and so $\psi^G$ is irreducible.
\end{proof}

When $M=N$, we recover the well-known fact that every nonprincipal irreducible character of a Frobenius kernel $N$ induces irreducibly to $G$. A standard proof of this fact is a straightforward application of Brauer's Permutation Lemma (see \cite[Theorem 6.32]{MI76}, for example). We can actually prove Theorem~\ref{Frobish} in a similar manner by using a variation of this result, which will also be used to obtain another character-theoretic characterization of Frobenius triples. Our proof is essentially the same argument as the one used in the proof of \cite[Theorem 6.32]{MI76}. 

\begin{lem}\label{newBrauer}
Let $N$ be a normal subgroup of $G$. Set $\mathrm {Cl}_G (N) = \{ \mathrm{cl}_G (n) \mid n \in N\}$ and $\irr_G (N) = \{ \mathrm{orb}_G (\psi) \mid \psi \in \irr(N) \}$. Suppose that the group $A$ acts on $G$ via automorphisms. If $N$ is $A$-invariant, then $A$ acts on $\mathrm{Cl}_G(N)$ and $\irr_G(N)$, and each $a\in A$ fixes the same number of elements in $\mathrm {Cl}_G (N)$ and $\irr_G (N)$.
\end{lem}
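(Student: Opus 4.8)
The plan is to adapt the classical proof of Brauer's Permutation Lemma (\cite[Theorem~6.32]{MI76}) by exhibiting a single finite-dimensional $\mathbb{C}$-vector space carrying an $A$-action, together with two distinguished bases, one indexed by $\mathrm{Cl}_G(N)$ and the other by $\irr_G(N)$. First I would take $V$ to be the space of $G$-invariant class functions on $N$, that is, the functions $f\colon N\to\mathbb{C}$ that are constant on each $G$-conjugacy class contained in $N$. Since the $G$-orbits on $N$ (under conjugation) are precisely the elements of $\mathrm{Cl}_G(N)$, the indicator functions $e_C$ of these orbits form one basis of $V$; in particular $\dim V=\norm{\mathrm{Cl}_G(N)}$. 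On the other hand, $V$ is exactly the fixed subspace $\mathrm{cf}(N)^G$ of the conjugation action of $G$ on the space $\mathrm{cf}(N)$ of class functions of $N$, and that action permutes the basis $\irr(N)$ of $\mathrm{cf}(N)$; hence the orbit sums $\theta_O=\sum_{\psi\in O}\psi$, as $O$ ranges over the $G$-orbits $\irr_G(N)$, form a second basis of $V$.

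Next I would let $A$ act on $V$ by $(a\cdot f)(n)=f(a^{-1}(n))$. Because $N$ is $A$-invariant and each $a$ is an automorphism of $G$, this action is well defined and preserves $V$: an automorphism of $G$ fixing $N$ carries $G$-orbits on $N$ to $G$-orbits on $N$, and likewise permutes the $G$-orbits on $\irr(N)$. The key computation is the matrix of a fixed $a\in A$ in each of the two bases. In the basis $\{e_C\}$ one checks directly that $a\cdot e_C=e_{C^a}$, where $C^a$ denotes the image orbit; so the matrix is the permutation matrix $P_a$ of the induced $A$-action on $\mathrm{Cl}_G(N)$. In the basis $\{\theta_O\}$ one checks that $a\cdot\theta_O=\theta_{O^a}$, where $O^a$ is the image of $O$ under the induced $A$-action on $\irr(N)$; so the matrix is the permutation matrix $Q_a$ of the $A$-action on $\irr_G(N)$.

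Finally, $P_a$ and $Q_a$ represent the same linear operator, namely the action of $a$ on $V$, in two different bases, so they are similar and have equal trace. Since the trace of a permutation matrix equals its number of fixed points, the number of elements of $\mathrm{Cl}_G(N)$ fixed by $a$ equals the number of elements of $\irr_G(N)$ fixed by $a$, which is the assertion; along the way one also recovers $\norm{\mathrm{Cl}_G(N)}=\norm{\irr_G(N)}$ from $\dim V$. I expect the only delicate points to be bookkeeping rather than conceptual: verifying that the orbit sums $\theta_O$ genuinely span $V$ (equivalently, that $V=\mathrm{cf}(N)^G$ and that $G$ permutes the basis $\irr(N)$), and checking the equivariance of the $A$-action carefully enough that $a\cdot\theta_O=\theta_{O^a}$ holds exactly, so that $Q_a$ is an honest permutation matrix. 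With the conventions fixed above, these are routine verifications, and the main thing to get right is matching the left/right and $a$ versus $a^{-1}$ conventions consistently across the two bases.
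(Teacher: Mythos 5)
Your proof is correct and is essentially the paper's argument in basis-free language: the paper's matrix $X=(\chi^{}_\lambda(g_\kappa))$ is exactly your change-of-basis matrix between the indicator functions of the $G$-classes in $N$ and the orbit sums $\theta_O$, and the intertwining relation $P(a)X=XQ(a)$ is your observation that the two permutation matrices represent the same operator on $V=\mathrm{cf}(N)^G$, so equal traces give equal fixed-point counts. The only difference is in how invertibility of the intertwiner is justified --- the paper computes $XDX^\dagger=|G|\,T$ via orthogonality of the $\chi^{}_\lambda$, while you invoke the standard fact that orbit sums of a permuted basis form a basis of the fixed subspace --- and both are valid.
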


\begin{proof}
Since $A$ acts on $G$ by automorphisms, it is clear that $A$ acts on $\mathrm{Cl}_G(N)$. Also for every $\psi\in\irr(N)$, $a\in A$ and $g\in G$, we have $(\psi^g)^a=(\psi^a)^{g^a}$ and so $a$ maps $\mathrm{orb}_G(\psi)$ to $\mathrm {orb}_G (\psi^a)$. Note that for each $\kappa \in \mathrm {Cl}_G (N)$ and $\lambda \in \irr_G(N)$, the character $\chi^{}_\lambda = \sum_{\xi \in \lambda} \xi$ is constant on the elements of $\kappa$.  For each $\kappa \in \mathrm{Cl}_G(N)$, let $g_\kappa\in\kappa$ and let $X$ be the matrix $(\chi^{}_\lambda(g_\kappa))$. For each $\lambda \in \irr_G(N)$ and $a\in A$, define $\chi_\lambda^a=\chi_{\lambda^a}$ and observe that $\chi_\lambda^a(n^a)=\chi^{}_\lambda(n)$ for every $\lambda \in \irr_G (N)$, $a\in A$ and $n\in N$.  Let $P$ and $Q$ denote the permutation representations associated to the actions of $A$ on $\irr_G(N)$ and $\mathrm{Cl}_G(N)$, respectively. Then the $(\lambda,\kappa)$-entry of $P(a)X$ is $\sum_\mu P(a)_{\lambda\mu} \chi_\mu (g_\kappa) = \chi_\lambda^a (g_\kappa),$ and the $(\lambda,\kappa)$-entry of $XQ(a)$ is 	$\sum_\gamma\chi^{}_\lambda(g_\gamma)Q(a)_{\gamma\kappa}=\chi^{}_\lambda(g_\kappa^{a^{-1}})	= \chi_\lambda^a(g_\kappa).$  Thus, we see that $P(a)X = XQ(a)$. 

To complete the proof, it suffices to show that $X$ is invertible since then $P(a)$ and $Q(a)$ would have the same trace. To see this, first note that $X$ must be square by Brauer's Permutation Lemma. Write $\mathrm {Cl}_G (N) = \{\kappa_1, \dotsc, \kappa_n \}$ and $\irr_G (N) = \{\lambda_1,\dotsc,\lambda_n\}$. Define the matrices $T = (\delta_{ij} \norm{\lambda_i})$ and $D = (\delta_{ij} \norm{\kappa_i})$, where (as usual) $\delta_{ij}$ denotes the Kronecker $\delta$ function. Let $X^\dagger$ denote the conjugate transpose of $X$. Then $XDX^\dagger = \norm{G}T$ since $\inner {\chi^{}_{\lambda_i},\chi^{}_{\lambda_j}} = \delta_{ij} \norm{\lambda_i}$.  Therefore, we conclude that $\det(X) \ne 0$ and so $X$ is invertible, as desired.  
\end{proof}

We can also characterize Frobenius triples in terms of the stabilizers of nonprincipal characters.

\begin{lem}\label{inertial}
Let $M \le N$ be proper, nontrivial normal subgroups of $G$ satisfying $M\le N$. Then $(G,N,M)$ is a Frobenius triple if and only if the stabilizer of every nonprincipal character in $\irr (M)$ is contained in $N$.
\end{lem}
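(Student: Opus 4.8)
The plan is to prove both implications by translating each of the two conditions into a statement about how an element of $G$ permutes, under conjugation, the conjugacy classes and the irreducible characters of $M$, and then to use Lemma~\ref{newBrauer} as the bridge between the two. Fix $g\in G$ and apply Lemma~\ref{newBrauer} to the cyclic group $A=\langle g\rangle$ acting on $M$ by conjugation, taking both of the roles of $G$ and $N$ in that lemma to be $M$. Since inner automorphisms fix every irreducible character, $\irr_M(M)=\irr(M)$ while $\mathrm{Cl}_M(M)$ is just the set of conjugacy classes of $M$, so the lemma says that $g$ fixes exactly as many conjugacy classes of $M$ as irreducible characters of $M$. Because the trivial class and the principal character are always fixed, subtracting one from each count shows that $g$ fixes a nontrivial conjugacy class of $M$ if and only if $g\in I_G(\lambda)$ for some nonprincipal $\lambda\in\irr(M)$. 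I will also record the elementary observation that if $x^g=x$ for some $1\ne x\in M$, then $g$ fixes the nontrivial class $\mathrm{cl}_M(x)$ setwise, since $\mathrm{cl}_M(x)^g=\mathrm{cl}_M(x)$ as $m\mapsto m^g$ permutes $M$.

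For the implication that the stabilizer condition forces $(G,N,M)$ to be a Frobenius triple, I argue by contraposition. If $(G,N,M)$ is not a Frobenius triple, there are $g\in G\setminus N$ and $1\ne x\in M$ with $x^g=x$. By the observation above, $g$ fixes the nontrivial class $\mathrm{cl}_M(x)$, so by the class--character correspondence extracted from Lemma~\ref{newBrauer} there is a nonprincipal $\lambda\in\irr(M)$ fixed by $g$. Then $g\in I_G(\lambda)\setminus N$, so $I_G(\lambda)\not\le N$ and the stabilizer condition fails. This direction uses only Lemma~\ref{newBrauer} together with the trivial implication ``fixed element $\Rightarrow$ fixed class.''

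For the converse I assume $(G,N,M)$ is a Frobenius triple and must show $I_G(\lambda)\le N$ for every nonprincipal $\lambda\in\irr(M)$. The main obstacle is precisely that a fixed nontrivial conjugacy class need not contain a fixed element, so the easy implication above cannot simply be reversed; this is exactly where I exploit the coprimality that a Frobenius triple forces. By Lemma~\ref{partconverse 1} we have $(\norm{G:N},\norm{M})=1$, so, writing $\pi=\pi(M)$, the quotient $G/N$ is a $\pi'$-group. Suppose $g\in I_G(\lambda)$ for some nonprincipal $\lambda$ and, seeking a contradiction, that $g\notin N$. Writing $g=g_\pi g_{\pi'}$ for the commuting $\pi$- and $\pi'$-parts, the image $g_\pi N$ is a $\pi$-element of the $\pi'$-group $G/N$, hence $g_\pi\in N$ and so $g_{\pi'}\notin N$, while $g_{\pi'}\in\langle g\rangle$ still fixes $\lambda$. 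Thus $a=g_{\pi'}$ acts coprimely on $M$ and fixes the nonprincipal character $\lambda$, so by the class--character correspondence $a$ fixes some nontrivial class $\mathrm{cl}_M(x)$. Now $\langle a\rangle$ is cyclic, hence solvable, so Glauberman's lemma applies to $\langle a\rangle$ and $M$ acting on the single $M$-class $\mathrm{cl}_M(x)$ (on which $M$ is transitive by conjugation) and produces a point of that class fixed by $a$, i.e.\ a nonidentity element of $C_M(a)$. Since $a\in G\setminus N$, this contradicts the Frobenius-triple hypothesis $C_M(a)=1$. Therefore $g\in N$, giving $I_G(\lambda)\le N$ and completing the proof.
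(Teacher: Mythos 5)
Your proof is correct, but the harder direction is argued quite differently from the paper. For ``Frobenius triple $\Rightarrow$ stabilizers lie in $N$,'' the paper applies Lemma~\ref{newBrauer} with $A=\langle g\rangle$ acting on $N$ (not on $M$ alone): fixing the nonprincipal character $\theta$ forces $g$ to fix the $N$-orbit of $\theta$ in $\irr_N(M)$, hence a nonidentity $N$-class in $\mathrm{Cl}_N(M)$, so $x^g=x^n$ for some $n\in N$ and $gn^{-1}\in C_G(x)\le N$ immediately. That is precisely the point of stating Lemma~\ref{newBrauer} at the level of $N$-orbits: a ``fixed $N$-class'' already hands you an element of a centralizer, so the gap between a fixed class and a fixed point never arises and no coprimality is needed. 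You instead use only the ordinary Brauer permutation lemma (your specialization $G=N=M$ of Lemma~\ref{newBrauer}), and then bridge the fixed-class/fixed-point gap by extracting the coprimality $(\norm{G:N},\norm{M})=1$ from Lemma~\ref{partconverse 1} (strictly you should first invoke Lemma~\ref{camfrob} to know the triple is Camina, or note that the relevant direction of that proof does not use the Camina hypothesis), passing to the $\pi'$-part of $g$, and applying Glauberman's lemma to the fixed $M$-class. This is valid and self-contained modulo Glauberman, but it imports a coprime-action fixed-point theorem the paper never needs; what it buys is independence from the $N$-orbit refinement, at the cost of being less elementary. Your contrapositive argument for the other direction is fine and matches the paper's ``similar'' reverse implication in substance.
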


\begin{proof}
First assume that $(G,N,M)$ is a Frobenius triple. Fix $\theta$ to be a nonprincipal irreducible character of $M$, and consider an element $g \in G$ that stabilizes $\theta$. Note then that $g$ fixes the $N$-orbit in $\irr_N (M)$ containing $\theta$. From Lemma~\ref{newBrauer}, it follows that $g$ fixes a nonidentity $N$-class in $\mathrm {Cl}_N (M)$, say the $N$-conjugacy class of $x\in M$.  Then there exists an element $n\in N$ so that $x^g=x^n$, which yields $gn^{-1}\in C_G(x) \le N$. It follows that $g\in N$.  The proof of the reverse implication is similar.
\end{proof}

We now present an alternate proof of Theorem~\ref{Frobish} using Lemma~\ref{newBrauer}.

\begin{proof}[Alternate proof of Theorem~$\ref{Frobish}$]
Let $g\in G\setminus N$. Note that $g$ cannot fix the $N$-conjugacy class of any nonidentity element of $M$. For if it did, say the $N$-class of $x\in M$, then there would exist $n\in N$ so that $x^g=x^n$ and so $gn^{-1}\in C_G(x)$. But this cannot happen since $C_G(x)\le N$ and $g\notin N$. Since $g$ fixes no $N$-classes of nonidentity elements of $M$, $g$ fixes no $N$-orbits of nonprincipal irreducible characters of $M$ by Lemma~\ref{newBrauer}. By Clifford's Theorem, we deduce that $\psi^g_M\ne \psi_M$ for any $\psi\in\irr(N\mid M)$, and hence $\psi^g\ne \psi$ for any $\psi\in\irr(N\mid M)$. It follows that $\psi^G\in\irr(G)$ for each character $\psi \in \irr(N \mid M)$.
\end{proof}

Theorem~\ref{Frobish} and Lemma~\ref{newBrauer} yield the following result about the the action of a group on the set of conjugacy classes of a normal subgroup.

\begin{cor}
Let $(G,N,M)$ be a Frobenius triple. Then every element $g\in G\setminus N$ fixes exactly as many $N$-conjugacy classes in $N\setminus M$ as $gM$ fixes nonidentity conjugacy classes of $N/M$.
\end{cor}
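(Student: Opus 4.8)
The plan is to compare the two counts by running the Brauer permutation count of Lemma~\ref{newBrauer} on all of $N$ and then subtracting off the part supported on $M$, which is exactly the fixed-point data already extracted in the alternate proof of Theorem~\ref{Frobish}.

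First I would fix $g \in G \setminus N$ and view conjugation by $g$ as an automorphism of $N$ that stabilizes $M$. Taking both groups in Lemma~\ref{newBrauer} to be $N$ — so that $\mathrm{Cl}_N(N)$ is the set of conjugacy classes of $N$ and $\irr_N(N) = \irr(N)$, since inner automorphisms act trivially on characters — gives that $g$ fixes exactly as many conjugacy classes of $N$ as irreducible characters of $N$.

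Next I would split both sides along $M$. Because $M \lhd N$, every conjugacy class of $N$ lies either inside $M$ or inside $N \setminus M$, and $g$ preserves this dichotomy; dually, $\irr(N) = \irr(N/M) \sqcup \irr(N \mid M)$ is a $g$-stable partition. The Frobenius triple hypothesis annihilates one piece on each side: as established in the alternate proof of Theorem~\ref{Frobish}, if $g$ fixed the $N$-class of some $1 \ne x \in M$, then $gn^{-1} \in C_G(x) \le N$ for some $n \in N$, forcing $g \in N$, so the only $N$-class inside $M$ fixed by $g$ is $\{1\}$; and the same reasoning shows $g$ fixes no character in $\irr(N \mid M)$. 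The global equality therefore collapses to
$$1 + \#\{\, N\text{-classes in } N\setminus M \text{ fixed by } g \,\} = \#\{\, \psi \in \irr(N/M) : \psi^g = \psi \,\}.$$
Finally I would push the right-hand side down to the quotient. Inflation identifies $\irr(N/M)$ with $\irr(\overline N)$, where $\overline N = N/M$, and is equivariant for the conjugation action of $\overline g = gM$, so the right-hand count equals the number of irreducible characters of $N/M$ fixed by $gM$. A second application of Lemma~\ref{newBrauer}, now with both groups equal to $N/M$ and $A = \langle gM \rangle$, equates this with the number of conjugacy classes of $N/M$ fixed by $gM$, which is $1$ (the identity class) plus the number of nonidentity classes fixed by $gM$. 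Cancelling the two occurrences of $1$ yields the corollary.

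The step needing the most care — and the one I would flag as the main obstacle — is the bookkeeping of the identity contributions: one must verify that both partitions are genuinely $g$-stable and that each side carries exactly one \emph{trivial} fixed point, namely the class $\{1\}$ on the left of the displayed identity and, after the quotient Brauer count, the identity class of $N/M$ on the right. The $g$-equivariance of inflation and of the projection $N \to N/M$ is precisely what makes these two stray $+1$'s correspond, so that they cancel cleanly. No machinery beyond Lemma~\ref{newBrauer} and the fixed-point facts already recorded for Frobenius triples is required.
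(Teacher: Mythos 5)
Your argument is correct and follows the same route as the paper: apply the Brauer-type permutation count to the action of $g$ on $N$, use the Frobenius triple condition to show the only fixed class inside $M$ is $\{1\}$ and no character of $\irr(N\mid M)$ is fixed (this is Theorem~\ref{Frobish}), and then transfer the surviving count to $N/M$ by a second application of the permutation lemma. The paper compresses all of this bookkeeping into two sentences, but the underlying computation is identical to yours.
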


\begin{proof}
By Brauer's Permutation Lemma \cite[Theorem 6.32]{MI76}, the number of $N$-conjugacy classes fixed by $g$ is equal to the number of irreducible characters of $N$ fixed by $g$. Since $g$ fixes no nonidentity classes of $M$ and no elements of 	$\irr (N \mid M)$ by Theorem~\ref{Frobish}, the result follows.
\end{proof}

Note that if $G$ is an extraspecial $p$-group, $N$ is a maximal subgroup of $G$, and $M = Z(G)$ then $\psi^G \in \irr (G)$ for every character $\psi \in \irr (N \mid M)$. However $(G,N,M)$ is not a Frobenius triple since $G$ is a $p$-group. In particular, the converse of Theorem~\ref{Frobish} does not hold in general. However, as Theorem~\ref{introindcent} indicates, it does hold if we include an extra arithmetic condition.

\begin{proof}[Proof of Theorem~\ref{introindcent}]
Assume that $(\norm{G:N},\norm{M})=1$. If $(G,N,M)$ is a Frobenius triple, then $\psi^G\in\irr(G)$ for every $\psi\in\irr(N\mid M)$ by Theorem~\ref{Frobish}. Conversely, assume that $\psi^G\in\irr(G)$ for every $\psi\in\irr(N\mid M)$. Then $(G,N,M)$ is a Camina triple by Lemma~\ref{equivCam2}. Hence the result follows from Theorem~\ref{partconverse}.
\end{proof}

We now prove Theorem~\ref{FW thm}, which provides character-theoretic information about Frobenius--Wielandt triples $(G,H,L)$ in the special situation that $H$ has a normal complement, $M$. In the situation of Theorem~\ref{FW thm}, suppose that $L$ is chosen to be as small as possible. Then the Frobenius--Wielandt kernel $N$ satisfies conditions satisfied by Frobenius kernels. Of course, if $L=1$ then $N=M$ and $N$ is a Frobenius kernel. Outside of the situation $L=1$, since every character $\psi\in\irr(N\mid M)$ induces irreducibly to $G$ (alternatively since every $g\in G\setminus N$ acts fixed-point-freely on $M$), the size of the quotient group $N/M$ is in some sense a measure of how close $N$ is to being a Frobenius kernel.

\begin{proof}[Proof of Theorem~\ref{FW thm}]
Since $H\cap M=1$, we see that $H^g\cap M=1$ for all $g\in G$. Since $N=G\setminus\bigcup_{g\in G}(H\setminus L)^g$, we deduce that $M\le N$. So $N=HM\cap N=(H\cap N)M=LM$. Since $G=HN=HM$ and $H\cap M=1$, we see that $(G,(H\cap N)M,M)=(G,N,M)$ is a Frobenius triple by Theorem~\ref{FT FW equiv}. The remaining statement follows from Theorem~\ref{Frobish}.
\end{proof}

We conclude this section with an application. We use the above results to provide an analog for Frobenius--Wielandt triples of a well-known characterization of Frobenius complements. Fix a subgroup $H \le G$. Then $H$ is a Frobenius complement if and only if $N_G (X) \le H$ for every subgroup $1 < X \le H$. If $G$ is a group and $G=H\ltimes M$, then a similar necessary and sufficient condition exists for $(G,H,L)$ to be a Frobenius--Wielandt triple. 

\begin{lem}\label{normalizer}
Let $M$ be a normal subgroup of $G$ and suppose that $H \le G$ satisfies $G = HM$ and $H \cap M = 1$. Let $L \lhd H$ and assume that $N_G (X) \le H$ for every subgroup $X \le H$ satisfying $X \nleq L$. Then $(G,H,L)$ is a Frobenius--Wielandt triple, and hence $(G,LM,M)$ is a Frobenius triple.
\end{lem}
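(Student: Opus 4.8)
The plan is to verify directly the intersection condition defining a Frobenius--Wielandt triple, and then to invoke Theorem~\ref{FW thm} to upgrade this to the statement that $(G,LM,M)$ is a Frobenius triple. First I would exploit the hypothesis $G=HM$ with $H\cap M=1$ to normalize the setup: every $g\in G$ factors as $g=hm$ with $h\in H$ and $m\in M$, and $g\in H$ precisely when $m=1$ (since $h^{-1}g=m\in H\cap M=1$). As $h$ normalizes $H$, we have $H^g=H^{hm}=H^m$, so it suffices to show $H^m\cap H\le L$ for every $1\ne m\in M$. This reduction is what lets me apply the normalizer hypothesis to a single element $m$ of $M$ rather than to an arbitrary $g\in G\setminus H$.

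The heart of the argument is a commutator computation showing that $m$ centralizes $X:=H^m\cap H$. Take $x\in X$. Since $x\in H^m$, we have $x^{m^{-1}}\in H$, so $[x,m^{-1}]=x^{-1}x^{m^{-1}}\in H$; on the other hand, normality of $M$ gives $[x,m^{-1}]=m^x m^{-1}\in M$. Hence $[x,m^{-1}]\in H\cap M=1$, so $x$ commutes with $m$. As $x\in X$ was arbitrary, $m\in C_G(X)\le N_G(X)$.

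With this in hand, suppose toward a contradiction that $X\nleq L$. Then the hypothesis $N_G(X)\le H$ forces $m\in H\cap M=1$, contradicting $m\ne 1$. Therefore $X=H^m\cap H\le L$ for every $1\ne m\in M$, which by the reduction above yields $H^g\cap H\le L$ for all $g\in G\setminus H$, so $(G,H,L)$ is a Frobenius--Wielandt triple. Finally, since $M\lhd G$ with $G=HM$ and $H\cap M=1$, Theorem~\ref{FW thm} applies and delivers both that the Frobenius--Wielandt kernel equals $LM$ and that $(G,LM,M)$ is a Frobenius triple.

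I do not expect a genuine obstacle here: the only substantive step is the computation trapping $[x,m^{-1}]$ in $H\cap M=1$, and this is precisely the commutator maneuver already used in the proofs of Theorems~\ref{cam psuedosplit} and \ref{FT FW equiv}. The points that require care are the opening reduction $H^g=H^m$ and the observation that $X$ is a legitimate subgroup of $H$ to which the normalizer hypothesis applies; everything after that is the established machinery of the preceding sections.
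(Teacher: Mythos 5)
Your proof is correct and follows essentially the same route as the paper's: the reduction $H^g=H^m$ for the $M$-part $m$ of $g$, the commutator computation trapping $[x,m^{-1}]$ (resp.\ $[k,x]$) in $H\cap M=1$, and the final appeal to Theorem~\ref{FW thm} all match. The only cosmetic difference is that you apply the normalizer hypothesis to the full intersection $X=H^m\cap H$, whereas the paper applies it to the cyclic subgroup $\langle h\rangle$ generated by a single element $h\in(H\cap H^g)\setminus L$; both are legitimate instances of the hypothesis.
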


\begin{proof}
Let $g\in G$ and assume that $h\in (H\cap H^g)\setminus L$. Then there exists $k\in H$ and $x\in M$ such that $h=k^x$. Then $k^{-1}h=[k,x]\le H\cap M=1$ and so $h=k\in C_G(x)$. In particular, $x$ normalizes $\inner{h}$ Since $h\notin L$, it follows that $N_G(\inner{h})\le H$. Hence $x=1$ and $g\in H$. The last statement follows from Theorem~\ref{FW thm}.
\end{proof}

We now prove the theorem.

\begin{thm}\label{normalizer equiv}
Let $M$ and $N$ be proper, nontrivial normal subgroups of $G$ with $M \le N$. Let $H \le G$ and suppose that $G = HM$ and $H \cap M = 1$. Then $G$ satisfies the equivalent conditions of Theorem~$\ref{equiv}$ if and only if $N_G (X) \le H$ for every subgroup $X \le H$ satisfying $X \nleq N$.
\end{thm}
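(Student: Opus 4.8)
The plan is to prove the two implications separately, in each case reducing to a result already established earlier. The key preliminary observation is that for a subgroup $X \le H$ the condition $X \nleq N$ is equivalent to $X \nleq H \cap N$, and that $L := H \cap N$ is a normal subgroup of $H$ since $N \lhd G$. Moreover $H \nleq N$, since otherwise $G = HM \le NM = N$ would contradict the properness of $N$; hence $H \cap N < H$. These remarks let me identify $H \cap N$ as the data to feed into Lemma~\ref{normalizer}, and they align the standing hypotheses of Theorem~\ref{equiv} with those of the present statement.

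For the backward direction, I would assume $N_G (X) \le H$ for every $X \le H$ with $X \nleq N$. By the observation above this is exactly the hypothesis of Lemma~\ref{normalizer} taken with $L = H \cap N$, so that lemma yields that $(G,(H \cap N)M,M)$ is a Frobenius triple. It then remains only to recognize this triple as $(G,N,M)$: since $M \lhd G$ with $M \le N$ and $G = HM$, Dedekind's modular law gives $N = N \cap HM = (N \cap H)M = (H \cap N)M$. Thus $(G,N,M)$ is a Frobenius triple, which is condition~(1) of Theorem~\ref{equiv}, and consequently all of the equivalent conditions there hold.

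For the forward direction, I would assume the conditions of Theorem~\ref{equiv}, invoking in particular condition~(4), namely $H^g \cap H \le H \cap N$ for every $g \in G \setminus H$. Let $X \le H$ satisfy $X \nleq N$ and take $g \in N_G (X)$. If $g \notin H$, then since $X = X^g$ and $X \le H$ we obtain $X = X^g \le H^g$ together with $X \le H$, so that $X \le H^g \cap H \le H \cap N \le N$, contradicting $X \nleq N$. Hence $g \in H$, and as $g$ was arbitrary in $N_G (X)$ we conclude $N_G (X) \le H$.

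The argument is short because the heavy lifting has already been done by Lemma~\ref{normalizer} and by the equivalence of the six conditions in Theorem~\ref{equiv}; the forward direction is precisely the analog of the classical proof that the normalizer of a nontrivial subgroup of a Frobenius complement lies inside the complement. I expect the only point requiring genuine care — the main, and fairly mild, obstacle — to be the bookkeeping that converts the normalizer hypothesis phrased in terms of $N$ into the form demanded by Lemma~\ref{normalizer} (phrased in terms of $L = H \cap N$), together with the Dedekind identity $N = (H \cap N)M$ that recovers the original triple; verifying $H \cap N < H$ additionally guarantees that the Frobenius--Wielandt data produced is genuinely proper.
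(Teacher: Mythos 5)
Your proof is correct and follows essentially the same route as the paper: the backward direction is an application of Lemma~\ref{normalizer} with $L = H \cap N$ (the paper leaves implicit the Dedekind identity $(H\cap N)M = N$ that you spell out), and the forward direction is exactly the paper's argument via condition (4) of Theorem~\ref{equiv}. The extra bookkeeping you include ($H\cap N \lhd H$, $H \cap N < H$, and the equivalence of $X \nleq N$ with $X \nleq H\cap N$) is accurate and harmless.
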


\begin{proof}
The one direction is Lemma~\ref{normalizer}.
To prove the converse, let $(G,N,M)$ be a Frobenius triple. Let $X\le H$ with $X\nleq H\cap N$ and let $g\in N_G(X)$. Then $X=X\cap X^g\le H\cap H^g$. Since $H^g\cap H\le H\cap N$ if $g\notin H$, by Theorem~\ref{equiv}, and since $X$ cannot be contained in $H\cap N$, we conclude that $g\in H$.
\end{proof}

The example $G = C_3 \times S_3$ from earlier shows that is necessary to assume that $G=HM$ in Theorem~\ref{normalizer equiv}.

\section{Multiple Frobenius triples}

We now consider ways of constructing new Frobenius triples from existing ones. 

\begin{lem}\label{frobquotient 2}
Let $(G,N,M)$ be a Frobenius triple, and let $1 < K$ be a normal subgroup of $G$ contained in $N$ satisfying $M\cap K<M$. 	Then $(G/K,N/K,MK/K)$ is a Frobenius triple. In particular, if $N=M\times K$ then $G/K$ is a Frobenius group with 	Frobenius kernel $N/K$.
\end{lem}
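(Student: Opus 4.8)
The plan is to prove that $(G/K, N/K, MK/K)$ is a Frobenius triple directly from the definition, by verifying that $C_{G/K}(\bar{x}) \le N/K$ for every nonidentity element $\bar{x} \in MK/K$. I would start by recording the setup: write $\bar{G} = G/K$ and use bars for images in the quotient. The key structural observation is that $MK/K \cong M/(M \cap K)$, and since the hypothesis $M \cap K < M$ guarantees this quotient is nontrivial, $MK/K$ is indeed a nontrivial proper normal subgroup of $G/K$. (Properness of $N/K$ in $G/K$ is immediate since $N < G$, and $MK/K \le N/K$ since $M, K \le N$.)

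First I would fix a nonidentity element $\bar{x} \in MK/K$ and a preimage $x \in MK$; since $\bar{x} \ne 1$ we have $x \notin K$, and we may arrange $x \in M$ (adjusting by an element of $K$, using $MK/K = MK/K$ and lifting representatives through $M$). The heart of the argument is to show that any $g \in G$ with $\bar{g} \in C_{G/K}(\bar{x})$ must satisfy $\bar{g} \in N/K$, equivalently $g \in N$. From $[\bar g, \bar x] = 1$ we get $[g,x] \in K \le N$. The clean way to exploit the Frobenius triple hypothesis is to instead show directly that $C_{G/K}(\bar x)$ has the right size using the Camina-triple machinery: by Lemma~\ref{camfrob} the triple $(G,N,M)$ is a Camina triple, and I would invoke the counting characterization. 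Concretely, I expect the cleanest route is to show $(G/K, N/K, MK/K)$ is itself a Camina triple and then apply Lemma~\ref{partconverse 1}, checking the coprimeness condition $(\norm{(G/K):(N/K)}, \norm{MK/K}) = 1$.

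The coprimeness reduction is the most efficient path. Since $(G,N,M)$ is a Frobenius triple, Lemma~\ref{partconverse 1} gives $(\norm{G:N}, \norm{M}) = 1$. Now $\norm{(G/K):(N/K)} = \norm{G:N}$ and $\norm{MK/K} = \norm{M : M\cap K}$ divides $\norm{M}$, so the pair $(\norm{(G/K):(N/K)}, \norm{MK/K})$ divides $(\norm{G:N}, \norm{M}) = 1$ and is therefore $1$. It remains to verify that $(G/K, N/K, MK/K)$ is a Camina triple; this follows because quotients of Camina triples by normal subgroups contained in $N$ are Camina triples — the conjugacy class of any $gK \in (G/K) \setminus (N/K)$ is the image of $\mathrm{cl}_G(g)$, which is a union of $M$-cosets, hence its image is a union of $MK/K$-cosets. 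With both the Camina-triple property and the coprimeness in hand, Lemma~\ref{partconverse 1} yields that $(G/K, N/K, MK/K)$ is a Frobenius triple.

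For the final assertion, suppose $N = M \times K$. Then $MK/K = N/K$, so the triple collapses to a Frobenius pair: $(G/K, N/K, N/K)$ is a Frobenius triple, which by the case $M = N$ of the definition means exactly that $G/K$ is a Frobenius group with kernel $N/K$. The main obstacle I anticipate is justifying cleanly that the Camina-triple property descends to the quotient and that one may choose the representative $x \in M$ rather than merely $x \in MK$; both are routine but must be handled with care so that the nontriviality hypothesis $M \cap K < M$ is genuinely used (it is exactly what ensures $MK/K \ne 1$, so that the quotient triple has a nontrivial lower term and the definition applies).
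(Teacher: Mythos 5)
Your proof is correct, but it takes a different route from the paper. The paper argues through characters: it takes $\psi\in\irr(N/K\mid MK/K)$, inflates it to a character in $\irr(N\mid M)$, applies Theorem~\ref{Frobish} to get $\psi^G\in\irr(G)$, deduces $\psi^{G/K}\in\irr(G/K)$, and then combines this with the same coprimality computation $\norm{MK/K}=\norm{M:M\cap K}\mid\norm{M}$ to invoke the character-theoretic criterion of Theorem~\ref{introindcent}. You instead work on the conjugacy-class side: you observe that the Camina-triple property descends to quotients (the image of $\mathrm{cl}_G(g)$ is $\mathrm{cl}_{G/K}(gK)$, and the image of a union of $M$-cosets is a union of $MK/K$-cosets), and then apply Lemma~\ref{partconverse 1}. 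The two arguments are dual to one another under the class/character correspondence and share the identical coprimality step (which in both cases ultimately rests on Lemma~\ref{partconverse 1} applied to the original triple to get $(\norm{G:N},\norm{M})=1$). Your version is somewhat more elementary in that it avoids induction of characters entirely; the paper's version has the advantage of reusing Theorem~\ref{Frobish}, which it has already established. Your verifications that $MK/K$ is nontrivial (this is exactly where $M\cap K<M$ enters) and that $K<N$, so $N/K$ is a nontrivial proper normal subgroup, are points the paper leaves implicit, and your handling of the ``in particular'' clause via condition (2) of Theorem~\ref{equivFrob} is fine.
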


\begin{proof}
Let $\psi\in\irr(G/K\mid MK/K)$. Then,considering $\psi$ as a character of $G$, observe that $M\nleq\ker(\psi)$ and so $\psi^G\in\irr(G)$. Thus $\psi^{G/K}\in\irr(G/K)$. Also $\norm{MK/K}=\norm{M/(M\cap K)}$ divides $\norm{M}$, which is relatively prime to $\norm{G:N}=\norm{G/K:N/K}$. Thus $(G/K,N/K,MK/K)$ is a Frobenius triple by Theorem~\ref{introindcent}.
\end{proof}

We have the following immediate consequence when $K < M$.

\begin{cor}\label{centralizer}
Let $(G,N,M)$ be a Frobenius triple. If $1 < K < M$ is normal in $G$, then $(G/K,N/K,M/K)$ is a Frobenius triple.
\end{cor}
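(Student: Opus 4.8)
The plan is to derive this corollary directly from Lemma~\ref{frobquotient 2} by checking that the hypotheses of that lemma are met with the present choice of $K$ and then simplifying the resulting quotient. First I would observe that $K$ is an admissible choice for the lemma: by hypothesis $1 < K$ and $K$ is normal in $G$, and moreover $K < M \le N$, so $K$ is contained in $N$ exactly as the lemma requires.

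Next I would verify the intersection condition $M \cap K < M$. Since $K < M$ we have $K \le M$, whence $M \cap K = K$; and as $K$ is properly contained in $M$, this yields $M \cap K = K < M$, precisely the hypothesis needed. Having confirmed all the hypotheses, I would invoke Lemma~\ref{frobquotient 2} to conclude that $(G/K, N/K, MK/K)$ is a Frobenius triple. Finally, since $K \le M$, the product $MK$ equals $M$, so $MK/K = M/K$, and the desired conclusion that $(G/K, N/K, M/K)$ is a Frobenius triple follows.

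I do not anticipate a genuine obstacle here, since the statement is an immediate specialization of Lemma~\ref{frobquotient 2}: the only points to record are that $K \le N$ (automatic from $K < M \le N$) and the cosmetic identity $MK = M$ coming from $K \le M$. If anything deserves a sentence of care, it is simply making explicit that the strict containment $K < M$ is what supplies the lemma's hypothesis $M \cap K < M$, so that the case $K = M$ (which would force $M/K$ trivial and fall outside the Frobenius-triple framework) is correctly excluded.
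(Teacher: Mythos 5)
Your proposal is correct and matches the paper exactly: the corollary is stated there as an immediate consequence of Lemma~\ref{frobquotient 2}, obtained by noting $K\le M\le N$, $M\cap K=K<M$, and $MK/K=M/K$. The checks you spell out are precisely the ones the paper leaves implicit.
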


In the next several results, we consider what happens when a group contains two different Frobenius triples. We first consider products of the bottom factors.

\begin{lem}\label{two Ms}
Let $M_1,M_2$ and $N$ be normal subgroups of $G$ satisfying $M_1M_2\le N$. If $(G,N,M_1)$ and $(G,N,M_2)$ are Frobenius triples, then so is $(G,N,M_1M_2)$. 
\end{lem}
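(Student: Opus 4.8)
The statement to prove is Lemma~\ref{two Ms}: if $(G,N,M_1)$ and $(G,N,M_2)$ are Frobenius triples with $M_1M_2\le N$, then $(G,N,M_1M_2)$ is a Frobenius triple. My plan is to work directly from the centralizer definition of a Frobenius triple, since that is the cleanest characterization available and avoids the arithmetic and character-theoretic machinery. Recall that $(G,N,M)$ is a Frobenius triple precisely when $C_G(x)\le N$ for every $1\ne x\in M$; equivalently, every $g\in G\setminus N$ acts fixed-point-freely on $M$, i.e.\ $C_M(g)=1$ for all $g\in G\setminus N$. I will verify this last form of the condition for the product $M_1M_2$.

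First I would fix an arbitrary element $g\in G\setminus N$ and aim to show $C_{M_1M_2}(g)=1$. From the two given Frobenius triples I already know $C_{M_1}(g)=1$ and $C_{M_2}(g)=1$. The natural reduction is to consider the conjugation action of $\langle g\rangle$ on the group $M_1M_2$ and its normal subgroups $M_1$ and $M_2$. Since $M_1,M_2\lhd G$, both are $\langle g\rangle$-invariant, and so is $M_1\cap M_2$ and the quotient $M_1M_2/M_1$. The key observation is that $M_1M_2/M_1\cong M_2/(M_1\cap M_2)$ as $\langle g\rangle$-groups, and $g$ acts fixed-point-freely on $M_2$, hence fixed-point-freely on the $g$-invariant subgroup $M_1\cap M_2$ and on the quotient $M_2/(M_1\cap M_2)$. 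Fixed-point-freeness passing to invariant subgroups is immediate; passing to quotients requires a coprimeness or solvability hypothesis in general, so this is where I expect the main obstacle to lie.

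The way I would handle the quotient issue is to exploit the coprimeness that is automatically available. By Lemma~\ref{partconverse 1}, each $(G,N,M_i)$ being a Frobenius triple forces $(\norm{G:N},\norm{M_i})=1$, so $\norm{G:N}$ is coprime to $\norm{M_1M_2}$ as well (its prime divisors lie among those of $\norm{M_1}$ and $\norm{M_2}$); in particular $\langle gN\rangle$ has order coprime to $\norm{M_1\cap M_2}$. Since $M_1\cap M_2$ is solvable (being a normal subgroup of the solvable group $M_1M_2$, solvable by \cite[Lemma 2.7]{NM14}), the coprime action machinery \cite[Corollary 3.28]{MI08} gives $C_{M_2/(M_1\cap M_2)}(g)=C_{M_2}(g)(M_1\cap M_2)/(M_1\cap M_2)=1$, so $g$ is indeed fixed-point-free on the quotient. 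Combining the invariant-subgroup statement ($C_{M_1}(g)=1$) with the quotient statement then forces $C_{M_1M_2}(g)=1$ by the standard fact that a coprime automorphism with no nontrivial fixed points on both an invariant normal subgroup and the corresponding quotient has no nontrivial fixed points on the whole group.

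Alternatively, and perhaps more in the spirit of this section, I could route the argument through Theorem~\ref{introindcent}. Since $(\norm{G:N},\norm{M_1M_2})=1$ as noted, it suffices to show $\psi^G$ is irreducible for every $\psi\in\irr(N\mid M_1M_2)$. Any such $\psi$ fails to contain $M_1M_2$ in its kernel, hence fails to contain at least one of $M_1$ or $M_2$ in its kernel, so $\psi\in\irr(N\mid M_1)$ or $\psi\in\irr(N\mid M_2)$; in either case Theorem~\ref{Frobish} gives $\psi^G\in\irr(G)$. Then Theorem~\ref{introindcent} yields that $(G,N,M_1M_2)$ is a Frobenius triple. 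This second route is shorter and sidesteps the quotient obstacle entirely, trading it for the observation that $\ker\psi\not\supseteq M_1M_2$ implies $\ker\psi\not\supseteq M_i$ for some $i$, which is the only step needing care. I would likely present this character-theoretic argument as the main proof, since it is the cleanest, while the centralizer argument serves as a conceptual check.
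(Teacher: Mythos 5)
Your character-theoretic argument is precisely the paper's proof: it observes that $\irr(N\mid M_1M_2)=\irr(N\mid M_1)\cup\irr(N\mid M_2)$, applies Theorem~\ref{Frobish} to the appropriate factor to get $\psi^G\in\irr(G)$, notes $(\norm{G:N},\norm{M_1M_2})=1$ from the coprimality for each $M_i$, and concludes via Theorem~\ref{introindcent}. Since you designate that route as your main proof, the proposal is correct and takes essentially the same approach as the paper; the auxiliary centralizer sketch is not needed, and, as you yourself suspected, its quotient step is the delicate point, since $\langle g\rangle$ need not act coprimely on $M_1\cap M_2$ even though $\langle gN\rangle$ does.
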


\begin{proof}
First observe that $\irr(N\mid M_1M_2)=\irr(N\mid M_1)\cup\irr(N\mid M_2)$. Thus every $\psi\in\irr(N\mid M_1M_2)$ induces irreducibly to $G$. Since $(\norm{G:N},\norm{M_i})=1$ for $1,2$, we see that $(\norm{G:N},\norm{M_1M_2})=1$. Hence $(G,N,M_1M_2)$ is a Frobenius triple. 
\end{proof}

We obtain the following interesting consequence of Lemma~\ref{two Ms}.

\begin{cor}\label{centralizer product}
Let $L_1$ and $L_2$ be normal subgroups of $G$. Then $\inner{C_G(x):1\ne x\in L_1L_2}=\inner{C_G(x):1\ne x\in L_1\cup L_2}$.
\end{cor}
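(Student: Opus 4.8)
The plan is to recognize the statement as a direct repackaging of Lemma~\ref{two Ms}, the key move being to take the right-hand side itself as the ambient normal subgroup. Write $B=\inner{C_G(x):1\ne x\in L_1\cup L_2}$ and $A=\inner{C_G(x):1\ne x\in L_1L_2}$. First I would record two immediate facts. Because $L_1$ and $L_2$ are normal, conjugation sends $C_G(x)$ to $C_G(x^g)$ with $x^g\in L_1\cup L_2$ whenever $x\in L_1\cup L_2$, so it permutes the generators of $B$; hence $B\lhd G$. Since $L_1\cup L_2\subseteq L_1L_2$, every generator of $B$ is a generator of $A$, which gives the inclusion $B\le A$ for free. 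The content of the corollary is therefore the reverse inclusion $A\le B$.

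For the reverse inclusion I would set $N=B$ and read off, straight from the definition of $B$, that $C_G(x)\le N$ for every $1\ne x\in L_1$ and for every $1\ne x\in L_2$. Since each nonidentity $x\in L_i$ lies in $C_G(x)\le N$, we get $L_i\le N$ and hence $L_1L_2\le N$. Phrased in the language of triples, these inclusions say precisely that $(G,N,L_1)$ and $(G,N,L_2)$ are Frobenius triples, so Lemma~\ref{two Ms} applies and shows that $(G,N,L_1L_2)$ is a Frobenius triple as well. Unwinding the definition once more, this means $C_G(x)\le N=B$ for every $1\ne x\in L_1L_2$, which is exactly $A\le B$.

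The one point requiring care — and the only real obstacle — is that Lemma~\ref{two Ms} is stated for genuine Frobenius triples, so I must first clear the degenerate cases in which the relevant subgroups fail to be nontrivial or proper. If $L_1=1$ (or $L_2=1$), then $L_1L_2=L_2$ and the nonidentity elements of $L_1\cup L_2$ are exactly those of $L_1L_2$, so $A=B$ trivially; thus I may assume $L_1,L_2\ne 1$. If $N=G$, then $B=G$ and the chain $B\le A\le G$ already forces $A=G=B$; thus I may assume $N<G$. In the surviving case the inclusion $L_1L_2\le N<G$ shows $L_1L_2$ is proper, while $L_1L_2\supseteq L_1\ne 1$ shows it is nontrivial, so all hypotheses of Lemma~\ref{two Ms} hold and the argument of the previous paragraph goes through unchanged.
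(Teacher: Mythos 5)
Your proposal is correct and follows essentially the same route as the paper's proof: both take $N$ to be the subgroup generated by the centralizers of nonidentity elements of $L_1\cup L_2$, observe that $(G,N,L_1)$ and $(G,N,L_2)$ are Frobenius triples (after discarding the case $N=G$), and invoke Lemma~\ref{two Ms} to get the reverse inclusion. Your explicit handling of the degenerate cases $L_i=1$ is a small extra care the paper omits, but it does not change the argument.
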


\begin{proof}
Write $C_i=\inner{C_G(x):1\ne x\in L_i}$ for $1=1,2$ and $C=\inner{C_G(x):1\ne x\in L_1\cup L_2}=C_1C_2$. Write $N=\inner{C_G(x):1\ne x\in L_1L_2}$. It is easy to see that $C\le N$. If $C=G$, then we are done so assume that $C<G$. Then $(G,C,L_1)$ and $(G,C,L_2)$ are Frobenius triples. By Lemma~\ref{two Ms} $(G,C,L_1L_2)$ is also a Frobenius triple; thus $C_G(x)\le C$ for every $1\ne x\in L_1L_2$. Hence $N\le C$, as required.
\end{proof}

We next consider the case when two different quotients are Frobenius triples.  The most important case is when $K_1 \cap K_2 = 1$.

\begin{lem}\label{intersection}
Let $M$ and $N$ be normal subgroups of $G$ satisfying $M\le N$. Let $K_1$ and $K_2$ be normal subgroups of $G$ properly contained in $M$. If $(G/K_i,N/K_i,M/K_i)$ is a Frobenius triple for $i=1,2$, then so is $(G/(K_1\cap K_2),N/(K_1\cap K_2),M/(K_1\cap K_2))$.
\end{lem}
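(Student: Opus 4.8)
The plan is to verify the defining centralizer condition for the quotient triple directly, which avoids any appeal to character theory and sidesteps the coprimality bookkeeping that a proof via Theorem~\ref{introindcent} would require. Write $K=K_1\cap K_2$, so that $K\lhd G$ and $K\le K_1<M\le N$. The structural hypotheses are immediate: since $K<M$ we have $M/K\neq 1$; since $N<G$ (forced by $(G/K_1,N/K_1,M/K_1)$ being a Frobenius triple) we have $N/K<G/K$; and $M/K\le N/K$ because $M\le N$. Thus it remains only to show that $C_{G/K}(\bar x)\le N/K$ for every nonidentity $\bar x\in M/K$.

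The key observation is that a nonidentity element of $M/K$ survives nontrivially in at least one of the quotients $M/K_1$, $M/K_2$. Concretely, fix $1\neq\bar x\in M/K$ and write $\bar x=xK$ with $x\in M$. Since $\bar x\neq 1$ we have $x\notin K=K_1\cap K_2$, so $x\notin K_i$ for some $i\in\{1,2\}$; fix such an $i$. Now take any $\bar g=gK\in C_{G/K}(\bar x)$. Then $[g,x]\in K\le K_i$, so passing to $G/K_i$ gives $[gK_i,xK_i]=1$, i.e.\ $gK_i\in C_{G/K_i}(xK_i)$. Because $x\notin K_i$, the image $xK_i$ is a nonidentity element of $M/K_i$, and since $(G/K_i,N/K_i,M/K_i)$ is a Frobenius triple we conclude $gK_i\in N/K_i$, whence $g\in N$ and $\bar g\in N/K$. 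This establishes $C_{G/K}(\bar x)\le N/K$ and hence that $(G/K,N/K,M/K)$ is a Frobenius triple.

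I do not anticipate a genuine obstacle here: the argument is a one-step reduction to the given hypotheses, and the only point requiring care is the case split choosing the index $i$ for which $x$ escapes $K_i$ (the roles of $K_1$ and $K_2$ being symmetric). One should also double-check the routine structural claims---that $M/K$ is nontrivial and $N/K$ is proper---so that the conclusion is a bona fide Frobenius triple rather than a degenerate one. An alternative proof could run through the character-theoretic criterion of Theorem~\ref{introindcent}, verifying $(\norm{G:N},\norm{M/K})=1$ from the two hypotheses via $\norm{M/K}=\norm{M/K_1}\,\norm{K_1/K}$ with $\norm{K_1/K}=\norm{K_1K_2/K_2}$ dividing $\norm{M/K_2}$, together with an induction-of-characters argument; but this is strictly more laborious than the direct centralizer computation above.
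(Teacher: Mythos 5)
Your proof is correct and takes essentially the same route as the paper's: a direct verification of the centralizer condition, using the fact that a commutator lying in $K_1\cap K_2$ lies in each $K_i$ so that the hypothesis for $G/K_i$ can be applied. The paper merely phrases it dually---fixing $g\in G\setminus N$ and showing that any fixed point of $g$ on $M/(K_1\cap K_2)$ lies in both $K_1$ and $K_2$---whereas you fix the nonidentity element $xK$ and choose the index $i$ with $x\notin K_i$; the content is identical.
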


\begin{proof}
Write $K_{12}=K_1\cap K_2$. Let $g\in G\setminus N$. Let $D_i=\{x\in G:[g,x]\in K_i\}$ for $i=1,2$, let $D_{12}=\{x\in G:[g,x]\in K_{12}\}$, and observe that $D_{12}=D_1\cap D_2$. Since $(G/K_i,N/K_i,M/K_i)$ is a Frobenius triple for $i=1,2$, it follows that $D_i\cap M\le K_i$ for $i=1,2$. Hence, $D_{12}\cap M=(D_1\cap D_2)\cap M\le K_1\cap K_2=K_{12}$, and we deduce that $C_{M/K_{12}}(gK_{12})=D_{12}/K_{12}\cap M/K_{12}$ is trivial. It follows that $(G/K_{12},N/K_{12},M/K_{12})$ is a Frobenius triple, as desired.
\end{proof}

We now return to the case where we have two Frobenius triples, and we consider the intersection of the bottom subgroups.

\begin{lem}\label{two trips}
If $(G,N_1,M_1)$ and $(G,N_2,M_2)$ are Frobenius triples and $M_1 \cap M_2 > 1$, then $(G,N_1\cap N_2,M_1\cap M_2)$ is a Frobenius triple. 
\end{lem}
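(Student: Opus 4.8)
The plan is to verify the definition of a Frobenius triple directly, since the hypotheses hand us exactly the two centralizer containments we need. Reading the hypothesis as the (evidently intended) statement that $(G,N_1,M_1)$ and $(G,N_2,M_2)$ are Frobenius triples, the first bookkeeping step is to check that $(G,N_1\cap N_2,M_1\cap M_2)$ meets the structural requirements of a Frobenius triple. Intersections of normal subgroups are normal, so $M_1\cap M_2$ and $N_1\cap N_2$ are normal in $G$. Since $M_i\le N_i$ for each $i$, we get $M_1\cap M_2\le N_1\cap N_2$. Properness is immediate from $N_1\cap N_2\le N_1<G$, and nontriviality of $M_1\cap M_2$ is exactly the standing hypothesis $M_1\cap M_2>1$.

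The heart of the argument is a one-line centralizer computation. Fix $1\ne x\in M_1\cap M_2$. Then $x\in M_1$, so because $(G,N_1,M_1)$ is a Frobenius triple we have $C_G(x)\le N_1$; likewise $x\in M_2$ gives $C_G(x)\le N_2$ from the triple $(G,N_2,M_2)$. Intersecting, $C_G(x)\le N_1\cap N_2$. Since $x$ was an arbitrary nonidentity element of $M_1\cap M_2$, this is precisely the defining condition for $(G,N_1\cap N_2,M_1\cap M_2)$ to be a Frobenius triple.

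I do not anticipate a genuine obstacle here: the result is essentially a formal consequence of the pointwise centralizer definition of a Frobenius triple, and the nontriviality hypothesis $M_1\cap M_2>1$ is needed only to guarantee that the bottom term of the new triple is nonprincipal, so that the triple is well-defined at all. One could alternatively route the proof through the character-theoretic criterion of Theorem~\ref{introindcent}, but that would require separately establishing the coprimality condition $(\norm{G:N_1\cap N_2},\norm{M_1\cap M_2})=1$ and analyzing $\irr(N_1\cap N_2\mid M_1\cap M_2)$, which is strictly more work than the direct centralizer argument and offers no advantage.
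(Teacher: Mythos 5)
Your proof is correct and follows the same route as the paper's: read the hypothesis as $(G,N_1,M_1)$ and $(G,N_2,M_2)$ being Frobenius triples, then for $1\ne x\in M_1\cap M_2$ intersect the two containments $C_G(x)\le N_1$ and $C_G(x)\le N_2$. The extra bookkeeping about normality, properness, and nontriviality is fine but not needed beyond what the paper records.
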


\begin{proof}
For each element $1\ne x\in M_1\cap M_2$, we have $C_G(x) \le N_1$ since $x \in M_1$ and $(G,N_1,M_1)$ is a Frobenius triple; similarly $C_G(x)\le N_2$. 
\end{proof}

\section{A Galois connection}\label{galois}

Let $G$ be a group. For a character $\chi$ of $G$, the {\it vanishing-off} subgroup of $\chi$---denoted $V(\chi)$---is the smallest  subgroup $V$ of $G$ such that $\chi(g)=0$ for all $g\in G\setminus V$. For a nontrivial, proper normal subgroup $M$ of $G$ Mlaiki \cite{NM14} defines the subgroup $V(G\mid M)$ to be the product of the subgroups $V(\chi)$ for $\chi\in\irr(G\mid M)$. This subgroup gives a convenient way of characterizing Camina triples. Mlaiki shows that $(G,N,M)$ is a Camina triple if and only if $V(G\mid M)\le N$.

In \cite{SBMLnested} the authors define another subgroup $U(G\mid N)$ for any normal subgroup $N$ of $G$ that can also be used to characterize Camina triples. For $M,N\lhd G$, we show that $V(G\mid M)\le N$ if and only if $M\le U(G\mid N)$. This not only gives the alternate characterization, but also shows that the two maps $V(G\mid\rule{.25cm}{.5pt})$ and $U(G\mid\rule{.25cm}{.5pt})$ provide a (monotone) Galois connection on the lattice of normal subgroups of the group $G$. In the section we explore a similar Galois connection coming from Frobenius triples.

We define $C (G \mid M) = \langle C_G (x) \mid 1 \ne x \in M \rangle$.  It is not difficult to see that if $N$ is a proper normal subgroup of $G$ containing $M$, then $(G,N,M)$ is a Frobenius triple if and only if $C (G \mid M) \le N$.  Furthermore, note that $M$ will be a Frobenius kernel if and only if $C (G \mid M) = M$. 

For $N\lhd G$, define $I(G\mid N)=\prod_{M\in\mathcal{F}}M$, where $\mathcal{F}=\{M\lhd G:C(G\mid M)\le N\}$. By Lemma~\ref{two Ms}, it is clear that $(G,N,M)$ is a Frobenius triple if and only if  $M\le I(G\mid N)$. It is also clear that $N$ will be a Frobenius kernel if and only if $N=I(G\mid N)$.  

We summarize these results below, as well as some other basic properties that hold in general for Galois connections.

\begin{lem}\label{galois basics}
Let $1 < M, M_1, M_2, N, N_1, N_2 < G$ be normal subgroups of $G$. The following hold:
\begin{enumerate}[label={\rm(\arabic*)}]\openup3pt
		\item $C(G\mid M)\le N$ if and only if $M\le I(G\mid N)$;
		\item $C(G\mid M_1M_2)=C(G\mid M_1)C(G\mid M_2)$;
		\item $I(G\mid N_1\cap N_2)=I(G\mid N_1)\cap I(G\mid N_2)$;
		\item $M\le I(G\mid C(G\mid M))$ and $C(G\mid I(G\mid N))\le N$;
		\item $Z(G)\cap I(G\mid N)=1$;
		\item $(G,N,M)$ is a Frobenius triple if and only if $C(G\mid M)\le N$;
		\item $G$ is a Frobenius group if and only if $C(G\mid F(G))=F(G)$, which happens if and only if $F(G)=I(G\mid F(G))$;
		\item $Z(G)M\le V(G\mid M)\le C(G\mid M)$ and $I(G\mid N)\le U(G\mid N)\le N\cap G'$.
\end{enumerate}
\end{lem}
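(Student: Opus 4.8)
The plan is to handle the eight items in an order that exploits their logical dependencies: establish the adjunction (1) first, harvest (2)--(4) as its formal consequences, dispatch (5)--(7) by short direct arguments, and reserve the genuine work for (8). Item (6) is immediate from the definitions, since $(G,N,M)$ being a Frobenius triple means $C_G(x)\le N$ for all $1\ne x\in M$, which says exactly that $C(G\mid M)=\langle C_G(x):1\ne x\in M\rangle$ lies in $N$; because $M\le C(G\mid M)$ always, this also forces $M\le N$. For (1) the forward direction is definitional ($C(G\mid M)\le N$ places $M$ in the family $\mathcal F$ defining $I(G\mid N)$), and the reverse uses monotonicity of $C(G\mid-)$ together with the observation that $I(G\mid N)$, being a finite product of members of $\mathcal F$, again lies in $\mathcal F$ by repeated application of Lemma~\ref{two Ms}; hence $C(G\mid I(G\mid N))\le N$, and $M\le I(G\mid N)$ yields $C(G\mid M)\le N$. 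With (1) available, (2)--(4) are the standard adjunction identities: (4) comes from substituting $N=C(G\mid M)$ and $M=I(G\mid N)$ into (1); (3) follows by applying (1) to both sides; and in (2) the inclusion $C(G\mid M_1)C(G\mid M_2)\le C(G\mid M_1M_2)$ is monotonicity, while the reverse inclusion is precisely Corollary~\ref{centralizer product}.

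For (5) I would set $D=Z(G)\cap I(G\mid N)$, a normal subgroup, so that $C(G\mid D)\le N$ by (1); if $D\ne 1$, any $1\ne z\in D\le Z(G)$ satisfies $C_G(z)=G\le C(G\mid D)\le N<G$, which is absurd, forcing $D=1$. For (7) I would first note the unconditional inequality $I(G\mid N)\le C(G\mid I(G\mid N))\le N$, coming from $M\le C(G\mid M)$ and (4); taking $N=F(G)$ gives $I(G\mid F(G))\le F(G)$ for free. Since also $F(G)\le C(G\mid F(G))$ always, the equality $C(G\mid F(G))=F(G)$ is equivalent to $C(G\mid F(G))\le F(G)$, which by (1) is equivalent to $F(G)\le I(G\mid F(G))$, i.e.\ to $F(G)=I(G\mid F(G))$; this settles the equivalence of the second and third conditions. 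For the link with $G$ being Frobenius I would invoke Theorem~\ref{equivFrob}: the condition $C(G\mid M)=M$ says exactly that every $1\ne x\in M$ has $C_G(x)\le M$, which for proper nontrivial $M$ characterizes $M$ as a Frobenius kernel, and then the standard fact that the Frobenius kernel of a Frobenius group coincides with its Fitting subgroup closes the loop upon taking $M=F(G)$.

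The substantive item is (8). For $Z(G)M\le V(G\mid M)$ I would argue the two generators separately. A central element acts as a scalar in each irreducible representation, so $\norm{\chi(z)}=\chi(1)\ne 0$ and hence $Z(G)\le V(\chi)$ for every $\chi$, giving $Z(G)\le V(G\mid M)$ as $\irr(G\mid M)\ne\varnothing$. For $M\le V(G\mid M)$ I would suppose some $1\ne m\in M$ lay outside $V(G\mid M)$; then every $\chi\in\irr(G\mid M)$ vanishes at $m$, so column orthogonality $\sum_{\chi}\chi(m)\overline{\chi(1)}=0$ collapses to $\sum_{M\le\ker\chi}\chi(1)^2=0$, an impossibility. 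The inclusion $V(G\mid M)\le C(G\mid M)$ I would obtain from (6): writing $N_0=C(G\mid M)$, either $N_0=G$ (trivial) or $(G,N_0,M)$ is a Frobenius triple, hence a Camina triple by Lemma~\ref{camfrob}, so every $\chi\in\irr(G\mid M)$ vanishes off $N_0$ by Lemma~\ref{equivCam2} and therefore $V(\chi)\le N_0$. The dual chain then follows from the $V$--$U$ adjunction of \cite{SBMLnested}: $I(G\mid N)\le U(G\mid N)$ because $V(G\mid I(G\mid N))\le C(G\mid I(G\mid N))\le N$ by the first chain and (4); $U(G\mid N)\le N$ because $M\le V(G\mid M)\le N$ whenever $M\le U(G\mid N)$; and $U(G\mid N)\le G'$ because if $M\nleq G'$ there is a nonprincipal linear $\lambda\in\irr(G\mid M)$ with $V(\lambda)=G$, forcing $V(G\mid M)=G\nleq N$.

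I expect the main obstacle to be (8): unlike (1)--(7) it cannot be run purely inside the lattice of normal subgroups but must pass through character theory, and it is the single place where the two Galois connections, namely $C/I$ here and $V/U$ from \cite{SBMLnested}, are forced to interact. The delicate points will be the orthogonality computation establishing $M\le V(G\mid M)$ and the careful accounting of which properties of $U(G\mid N)$ are imported from \cite{SBMLnested} versus re-derived; everything else reduces to a definition or to formal manipulation of the adjunction in (1).
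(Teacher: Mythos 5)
Your proposal is correct and follows essentially the same route as the paper: (1) via the definition of $I(G\mid N)$ together with Lemma~\ref{two Ms}, (2) via Corollary~\ref{centralizer product}, (3)–(4) as formal consequences of the adjunction (the paper uses Lemma~\ref{two trips} for (3), which amounts to the same thing), (5)–(7) from the fixed-point-free action, and (8) via the Frobenius-implies-Camina argument for $V(G\mid M)\le C(G\mid M)$. The only difference is cosmetic: where the paper disposes of $Z(G)M\le V(G\mid M)$ and $U(G\mid N)\le N\cap G'$ by citing \cite{SBMLnested} and \cite{NM14}, you re-derive them directly (column orthogonality, scalar action of central elements, linear characters), which is a valid self-contained substitute.
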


\begin{proof}
Statement (1) is clear from the definitions. Since $C(G\mid M_1)C(G\mid M_2)=\inner{C_G(x):1\ne x\in M_1\cup M_2}$, statement (2) is just Corollary~\ref{centralizer product}. One direction of statement (3) is clear from the definition; the other follows immediately from Lemma~\ref{two trips}. Statement (4) follows immediately from statement (1). Since every $g\in G\setminus N$ acts fixed-point-freely on $I(G\mid N)$, statements (5), (6) and (7) follow.

We now prove (8). Since $(G,C(G\mid M),M)$ is a Frobenius triple, it is also a Camina triple by Lemma~\ref{camfrob}. Thus $V(G\mid M)\le C(G\mid M)$ follows from \cite[Theorem 2.1]{NM14}. Similarly $I(G\mid N)\le U(G\mid N)$ \cite[Lemma 5.8]{SBMLnested}. The remaining two containments follow from Lemmas 5.2, 5.3 and 5.4 of \cite{SBMLnested} and Lemma 2.4 of \cite{NM14}.
\end{proof}

The next result gives two alternate descriptions of $I(G\mid N$). As is standard, when $\psi$ is an irreducible character of a normal subgroup $N$ of $G$, we let $I_G(\psi)$ denote the stabilizer of $\psi$ under the natural action of $G$ on $\irr(N)$ induced by conjugation. 

\begin{lem}\label{I(G|N) description}
Let $1<M,N<G$ be normal subgroups of $G$. Write $\pi=\pi(G/N)$. The following hold:
\begin{enumerate}[label={\rm(\arabic*)}]\openup3pt
		\item A normal subgroup $M$ of $G$ satisfies $M\le I(G\mid N)$ if and only if $M\le O_{\pi'}(G)$ and 
		$\psi^G\in\irr(G)$ for every $\psi\in\irr(N\mid M)$; 
		\item $I(G\mid N)=O_{\pi'}(G)\cap\bigcap\limits_{\substack{\psi\in\irr(N)\\ I_G(\psi)>N}}\ker(\psi)$.
\end{enumerate}
\end{lem}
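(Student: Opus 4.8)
The plan is to prove the two characterizations in Lemma~\ref{I(G|N) description} by first establishing (1) and then deriving (2) as a consequence, since (2) is essentially a restatement of (1) once we identify $I(G\mid N)$ as the largest normal subgroup satisfying the conditions in (1). For statement (1), the key observation is that the combination of the two stated conditions---namely $M\le O_{\pi'}(G)$ and $\psi^G\in\irr(G)$ for all $\psi\in\irr(N\mid M)$---is precisely the characterization of Frobenius triples provided by Theorem~\ref{introindcent}. First I would note that $M\le O_{\pi'}(G)$ is equivalent to the arithmetic condition $(\norm{G:N},\norm{M})=1$: indeed $M\le O_{\pi'}(G)$ forces $\norm{M}$ to be a $\pi'$-number, and conversely if $(\norm{G:N},\norm{M})=1$ then every prime dividing $\norm{M}$ lies outside $\pi=\pi(G/N)$, so a Sylow argument places $M$ inside $O_{\pi'}(G)$. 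Given this equivalence, Theorem~\ref{introindcent} tells us that the two conditions together hold exactly when $(G,N,M)$ is a Frobenius triple, which by Lemma~\ref{galois basics}(1),(6) happens if and only if $M\le I(G\mid N)$. This proves (1).

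For statement (2), the strategy is to show that the right-hand side is itself a normal subgroup satisfying the two conditions of (1), whence it is contained in $I(G\mid N)$, and then to verify the reverse containment directly. Write $R=O_{\pi'}(G)\cap\bigcap_{\psi}\ker(\psi)$, where the intersection runs over $\psi\in\irr(N)$ with $I_G(\psi)>N$. The containment $I(G\mid N)\le R$ follows from (1): since $M=I(G\mid N)$ satisfies $M\le O_{\pi'}(G)$, and since $\psi^G\in\irr(G)$ for $\psi\in\irr(N\mid M)$ implies (as in the proof of Theorem~\ref{Frobish}, via Lemma~\ref{induce}) that $I_G(\psi)=N$ for every such $\psi$, any character $\psi$ with $I_G(\psi)>N$ must lie outside $\irr(N\mid M)$, i.e. must have $M\le\ker(\psi)$; hence $I(G\mid N)\le\ker(\psi)$ for all such $\psi$, giving $I(G\mid N)\le R$. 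For the reverse containment $R\le I(G\mid N)$, I would apply (1) to $M=R$: by construction $R\le O_{\pi'}(G)$, so it remains to check that $\psi^G\in\irr(G)$ for every $\psi\in\irr(N\mid R)$. If $\psi\in\irr(N\mid R)$ then $R\nleq\ker(\psi)$, so by definition of $R$ we cannot have $I_G(\psi)>N$, forcing $I_G(\psi)=N$ and hence $\psi^G\in\irr(G)$; thus $R$ satisfies both conditions of (1) and $R\le I(G\mid N)$.

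The main obstacle I anticipate is the careful handling of the inertia-group criterion for irreducible induction. The equivalence ``$\psi^G\in\irr(G)$ if and only if $I_G(\psi)=N$'' is not automatic for an arbitrary normal subgroup: it requires the fully-ramified behavior supplied by Lemma~\ref{induce} together with the coprimality condition $(\norm{G:N},\norm{M})=1$, which guarantees (via Schur--Zassenhaus-type splitting or the theory of fully-ramified sections) that the stabilizer $I_G(\psi)$ cannot induce a nontrivial fully-ramified character and so must collapse to $N$. I would make sure to invoke the coprimality hypothesis precisely at the point where I rule out $I_G(\psi)>N$, since without it the extraspecial $p$-group example following Theorem~\ref{Frobish} shows the correspondence fails. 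Once this arithmetic input is in place, both containments in (2) reduce to bookkeeping with the two defining conditions of (1), and the proof closes cleanly.
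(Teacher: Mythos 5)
Your proof is correct and follows essentially the same route as the paper: part (1) is Theorem~\ref{introindcent} together with the observation that $M\le O_{\pi'}(G)$ is equivalent to $(\norm{G:N},\norm{M})=1$, and part (2) is established by the same two containments against $O_{\pi'}(G)\cap\bigcap\ker(\psi)$ that the paper uses. One small correction to your closing paragraph: for a normal subgroup $N$ the equivalence $\psi^G\in\irr(G)$ if and only if $I_G(\psi)=N$ is unconditional (Frobenius reciprocity gives $\inner{\psi^G,\psi^G}=\norm{I_G(\psi):N}$), so no coprimality or fully-ramified input is needed there; the coprimality hypothesis enters only in passing from the induction condition to the Frobenius-triple condition via Theorem~\ref{introindcent}, which is exactly where your argument already invokes it, so the misattribution does not affect the validity of your proof.
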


\begin{proof}
Statement (1) is a restatement of Theorem~\ref{introindcent}. 

Since $(G,N,I(G\mid N))$ is a Frobenius triple, $I(G\mid N)\le O_{\pi'}(G)$ by Lemma~\ref{partconverse 1}. Let $K$ be the intersection of the kernels of all irreducible characters $\psi$ of $N$ satisfying $I_G(\psi)>N$. Observe that every $\chi\in\irr(N\mid I(G\mid N))$ induces irreducibly to $G$ by Theorem~\ref{introindcent}. It is easy to see that this is equivalent to $I_G(\psi)=N$ for all $\psi\in\irr(N\mid I(G\mid N))$. Since $I_G(\psi)=N$ if and only if $\psi\in\irr(G\mid K)$, we see that $\irr(G\mid I(G\mid N))\subseteq\irr(G\mid K)$, which gives $I(G\mid N)\le K$. Now let $L=O_{\pi'(G)}\cap K$. Then $\irr(G\mid L)\subseteq\irr(G\mid K)$, so every $\psi\in\irr(G\mid L)$ induces irreducibly to $G$. Since $L\le O_{\pi'}(G)$, it follows from Theorem~\ref{introindcent} that $(G,N,L)$ is a Frobenius triple. Thus $L\le I(G\mid N)$ by Lemma~\ref{galois basics}.
\end{proof}

\section{Examples}

We now present a number of examples to see what can occur in Frobenius triples. The first result tells us that we cannot expect to say anything about the structure of $N/M$ when $(G,N,M)$ is a Frobenius triple. 

\begin{lem} \label{lem}
Let $(G,N,M)$ be a Frobenius triple and let $A$ be any group. Then $(G\times A,N\times A,M)$ is a Frobenius triple.
\end{lem}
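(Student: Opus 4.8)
The plan is to verify directly that every element of $(G\times A)\setminus(N\times A)$ acts fixed-point-freely on $M$, which is the definition of a Frobenius triple. First I would observe that $M$, viewed as a subgroup of $G\times A$ via the embedding $m\mapsto (m,1)$, is a normal subgroup of $G\times A$, and similarly $N\times A$ is normal, with $M\le N\times A$ since $M\le N$. Both are proper and nontrivial: $M$ is nontrivial by hypothesis, and $N\times A$ is proper because $N<G$ forces $N\times A<G\times A$.

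The heart of the argument is the centralizer computation. Take a nonidentity element $(m,1)\in M$ and compute its centralizer in $G\times A$. Because centralizers in a direct product split coordinatewise, I would write $C_{G\times A}((m,1))=C_G(m)\times C_A(1)=C_G(m)\times A$. Since $(G,N,M)$ is a Frobenius triple, $C_G(m)\le N$, and therefore $C_{G\times A}((m,1))\le N\times A$. This says precisely that every element lying outside $N\times A$ acts fixed-point-freely on the nonidentity elements of $M$, so $(G\times A,N\times A,M)$ is a Frobenius triple.

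I do not expect a serious obstacle here; the only point requiring any care is the identification of $M$ inside $G\times A$ and the fact that the third term of the triple is $M$ itself rather than $M\times A$. The coprimality condition does not enter this direct computation at all, which is consistent with the remark in the introduction that one has no control over $N/M\cong (N/M)\times A$. One could alternatively phrase the proof through the character-theoretic criterion of Theorem~\ref{introindcent}, but the centralizer computation is cleaner and avoids any need for the arithmetic hypothesis, so that is the route I would take.
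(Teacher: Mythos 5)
Your argument is correct and is essentially identical to the paper's proof: both compute $C_{G\times A}((m,1))=C_G(m)\times A\le N\times A$ for $1\ne m\in M$ using the fact that centralizers in a direct product split coordinatewise. No issues.
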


\begin{proof}
Consider any element $1\ne x\in M$. Then $C_{G\times A}(x) = C_G(x)\times A\le N\times A$.
\end{proof}

If $A$ is any group, then there is a Frobenius triple $(G,N,M)$ so that $N/M \cong A$ by taking $H$ to be a Frobenius group with Frobenius kernel $M$.  By Lemma \ref{lem}, we see that $(H \times A, M \times A, M)$ is a Frobenius triple; so take $G = H \times A$ and $N = M \times A$.

We next show that if we have a group of prime order acting coprimely on an abelian group, then the resulting semidirect product yields a Frobenius triple.

\begin{exam}
Let $N$ be an abelian group and $H$ be a cyclic group of prime order not dividing $\norm{N}$. Suppose that $H$ acts on $N$ and let $G=H\ltimes N$. We claim that $(G,N,G')$ is a Frobenius triple. To see this, first note that $(\norm{G:N},\norm{G'})=1$. Let $\psi\in\irr(N\mid G')$. Since $G'\nleq\ker(\psi)$, $\psi$ does not extend to $G$. Thus $\psi^G$ is irreducible and $(G,N,G')$ is a Frobenius triple, as claimed.
\end{exam}

We next present an example of a Frobenius triple that is not split over the bottom subgroup.

\begin{exam}
Let $\mathbb{F}$ be the finite field of cardinality 64, and let $K\le\mathrm{GL}_3(\mathbb{F})$ be the subgroup of all upper triangular unipotent matrices. Let $D\le \mathrm{GL}_3(\mathbb{F})$ be the subgroup of all diagonal matrices of the form $\mathrm{diag}(1,a,a^2)$, where $a\in \mathbb{F}^\times$. Let $H$ be a Sylow $3$-subgroup of $D$. Then $G=HK$ is a Frobenius group with Frobenius kernel $K$. Let $M=K'$, and let $N\le G$ satisfy $\norm{N:K}=3$. Then $(G,N,M)$ is a Frobenius triple that does not split over $M$, since $M\le \Phi(G)$.
\end{exam}

Next, we consider a class of examples that has arisen in a number of places in the literature.

\begin{exam}
Let $(G,N)$ be a Camina pair with $G/N$ a $p$-group for some prime $p$. Assume that $G$ is not a $p$-group and that $(G,N)$ is not a Frobenius group. Chillag and Macdonald conjectured in \cite{genFrobGps1} that $G$ is a Frobenius group with Frobenius complement isomorphic to the quaternion group $Q_8$ and $\norm{G:N}=4$. This conjecture is verified by Dark and Scoppola in \cite{CaminaGpClass} in the case that $G/N$ is abelian. Isaacs \cite{MICoprime} showed that $G$ must have a solvable normal $p$-complement $M$ and that $C_G(x)\le N$ for every nonidentity element $x\in M$. Around the same time, Chillag, Mann and Scoppola independently discovered similar results in \cite{genFrobGps2} and showed that $(G,P,P\cap N)$ is a Frobenius--Wielandt group with Frobenius--Wielandt kernel $N$, where $P$ is a Sylow $p$-subgroup of $G$. In particular we see that $(G,N,M)$ is a Frobenius triple.
 \end{exam}
 
%\section{In progress}
%
%This is an alternate proof of Lemma~\ref{partconverse 1} that uses a generalization of an argument of A. Camina.
%\begin{lem}
%Let $(G,N,M)$ be a Camina triple. Let $g\in G\setminus N$ have order $m$ modulo $M$. If $y\in C_M(g)$ has prime order $p$, then the order of $y$ divides $m$.
%\end{lem}
%
%\begin{proof}
%Assume that $p^e$ is the exact power of $p$ dividing the order of $g$ and assume that $p$ does not divide $m$. Write $g=uv$, where $o(v)=p^e$, $p$ does not divide $o(u)$ and $[u,v]=1$. Then $v^m=g^m(u^m)^{-1}$ and $g^m\in M$. Let $d$ denote the order of $u^m$. Then $1=(u^m)^d=g^{md}(v^{md})^{-1}$ and so $(v^m)^d\in M$. Since the order of $u$ and $v$ are coprime and $p$ does not divide $m$, $d$ is coprime to $o(v^m)=o(v)=p^e$. So there exist integers $a$ and $b$ so that $1=da+p^eb$ and so $v^m=v^{mda}\in M$. Since $p$ does not divide $m$, there exist integers $k,\ell$ so that $1=mk+p^e\ell$. Hence $v=(v^m)^k\in M$ as well. Since $g\notin N$, we know that $u\notin N$. But $v\in C_M(u)$ and it follows that $o(v)$ divides the order of $u$, a contradiction. So the only primes dividing $o(g)$ are primes dividing $m$ and this completes the proof.
%\end{proof}
%
%\begin{cor}
%Let $(G,N,M)$ be a Camina triple and suppose that $(\norm{G:N},\norm{M})=1$. Then $(G,N,M)$ is a Frobenius triple.
%\end{cor}

We know that when $G$ is a Frobenius group, the set of Frobenius complements are conjugate.  It is natural to ask what can be said about the pseudo-complements for a Frobenius triple.  We present two examples to show that very little can be said.  The first shows that there is not a unique conjugacy class of such pseudo-complements. In fact, it also established that there is not even a unqiue conjugacy class of maximal pseudo-complements.  

\begin{exam}
Take $G=A\times B$, where $A=B=S_3$, $M=A'$, and $N=A'\times B$.  It is not difficult to see that if $x \in M \setminus \{ 1 \}$, then $C_G (x) = N$.  It follows that $(G,N,M)$ is a Frobenius triple.  Take $D$ to be the diagonal subgroup of $G$; observe that the only maximal subgroup containing $D$ is $DM$. (This can be verified this with MAGMA). So $D$ is itself maximal subject to the condition that $G = DN$ and $D\cap M=1$. Let $H=P\times B$, where $P$ is a Sylow $2$-subgroup of $G$. It is not difficult to see that $H$ is a pseudo-complement of $G$.  Then $\lvert{H}\rvert=12$ is the maximum order of a pseudo-complement. It turns out that every pseudo-complement of order $12$ is conjugate to $H$. (This is also checked with MAGMA). So there is not a unique conjugacy class of maximal pseudo-complements, and there can exist conjugacy classes of pseudo-complements of different orders. 
\end{exam}

The next example shows that there can even be two different conjugacy classes of pseudo-complements of maximal possible order.

\begin{exam}
Take $G$ to be the Frobenius group $C_2\ltimes(C_3\times C_3)$. Let $P$ be a Sylow $2$-subgroup and let $M$ be minimal normal. Let $L_1, L_2, L_3$ be the remaining minimal normal subgroups of $G$. Let $H_i = PL_i$ for $i = 1, 2, 3$. Let $N = F(G)$.  Then $\lvert{M}\rvert = 3$ and $\lvert{G:N}\rvert = 2$. Also, $G=H_iN$ and $H_i\cap M = 1$ for $i = 1, 2, 3$.  Note that $H_i$ and $H_j$ are not conjugate if $i \ne j$. Thus there is not a unique conjugacy class of pseudo-complements of maximal order. Also, note that $(G,N,M)$ is a Frobenius triple, so this does not even hold for them. 
\end{exam}

%\bibliographystyle{plain}
%\bibliography{bio}

\end{document}